\title[cocycles over higher-rank abelian actions]
      {cocycles over higher-rank abelian actions on quotients of semisimple Lie groups}
\author{Felipe A. Ram\'\i rez}
\thanks{The author was supported by NSF RTG 0602191}
\newtheorem*{theoremA}{Theorem A}
\newtheorem*{theoremA'}{Theorem A'}
\newtheorem*{theoremB'}{Theorem B'}
\newtheorem*{theoremC'}{Theorem C'}
\newtheorem*{theoremD'}{Theorem D'}
\newtheorem{theorem}{Theorem}[section]
\newtheorem{lemma}[theorem]{Lemma}
\newtheorem{proposition}[theorem]{Proposition}
\newtheorem{corollary}[theorem]{Corollary}
\theoremstyle{definition}
\theoremstyle{remark}
\newtheorem{remark}[theorem]{Remark}
\theoremstyle{plain}
\newtheorem{result}{Theorem}
\newcommand{\NN}{\mathbb{N}}
\newcommand{\RR}{\mathbb{R}}
\newcommand{\CC}{\mathbb{C}}
\newcommand{\Cinf}{C^{\infty}}
\newcommand{\ZZ}{\mathbb{Z}}
\newcommand{\U}{\mathcal{U}}
\newcommand{\T}{\mathcal{T}}
\newcommand{\X}{\mathcal{X}}
\newcommand{\Y}{\mathcal{Y}}
\newcommand{\Z}{\mathcal{Z}}
\newcommand{\V}{\mathcal{V}}
\newcommand{\W}{\mathcal{W}}
\newcommand{\g}{\mathfrak{g}}
\newcommand{\h}{\mathfrak{h}}
\newcommand{\z}{\mathfrak{z}}
\newcommand{\LL}{\mathfrak{L}}
\newcommand{\uu}{\mathfrak{u}}
\newcommand{\Sl}{\mathfrak{sl}}
\newcommand{\SL}{\mathrm{SL}}
\newcommand{\PSL}{\mathrm{PSL}}
\newcommand{\HH}{\mathcal{H}}
\newcommand{\II}{\mathcal{I}}
\newcommand{\ex}{\mathrm{exp}}
\newcommand{\dist}{\mathrm{dist}}
\DeclareMathOperator{\Ad}{Ad}
\DeclareMathOperator{\ad}{ad}
\def\a{{\alpha}}
\def\b{{\beta}}
\def\e{{\epsilon}}
\def\s{{\sigma}}
\def\t{{\tau}}
\def\l{{\lambda}}
\def\L{{\Lambda}}
\def\o{{\omega}}
\def\G{{\Gamma}}
\begin{document}

\begin{abstract}
We study actions by higher-rank abelian groups on quotients of semisimple Lie groups with finite center.  First, we consider actions arising from the flows of two commuting elements of the Lie algebra---one nilpotent, and the other semisimple.  Second, we consider actions from two commuting unipotent flows that come from an embedded copy of $\overline{\SL(2,\RR)}^{k} \times \overline{\SL(2,\RR)}^{l}$.  In both cases we show that any smooth $\RR$-valued cocycle over the action is cohomologous to a constant cocycle via a smooth transfer function.  These build on results of D. Mieczkowski, where the same is shown for actions on $(\SL(2,\RR) \times \SL(2,\RR))/\G$.
\end{abstract}

\maketitle

\section{Introduction} \label{intro}

This work is concerned with smooth $\RR$-valued cocycles over actions by higher-rank subgroups on quotients of semisimple Lie groups.  The goal is to show that, in the cases we consider, all such cocycles are smoothly cohomologous to constant cocycles---cocycles whose values only depend on the acting group.  Our results are in the vein of work done by A. Katok and R. Spatzier \cite{KS2}, L. Flaminio and G. Forni \cite{FF}, and D. Mieczkowski \cite{M1, M2}.  Our work relies heavily on theirs.

In \cite{KS2}, Katok and Spatzier showed that smooth cocycles over Anosov actions by higher-rank abelian groups are cohomologically constant, via smooth transfer functions.  This is in contrast to the rank one situation, where Livsic showed that there is an infinite-dimensional space of obstructions to solving the cohomology equation for a hyperbolic action by $\RR$ or $\ZZ$.  For both results, the stable and unstable foliations of the space play a central role.  In particular, the regularity of transfer functions is achieved by studying their behavior along leaves of these foliations.  We will employ similar methods to show that our transfer functions are smooth.

In \cite{FF}, Flaminio and Forni characterized the obstructions to solving the cohomology equation for horocycle flows on quotients of $\PSL(2,\RR)$.  Suppose $\HH$ is a unitary representation of $\PSL(2,\RR)$.  If the Casimir operator on $\HH$ has a spectral gap, then the obstructions are given by distributions that are invariant under the flow of $\U =  \big( \begin{smallmatrix} 0 & 1 \\ 0 & 0 \end{smallmatrix}\big)$.  That is, for a smooth vector $f \in C^{\infty}(\HH)$, if $D(f)=0$ for every $\U$-invariant distribution $D$, then there exists a smooth vector $P \in C^{\infty}(\HH)$ such that $\U P=f$.  (In fact, Flaminio and Forni showed this for $f \in W^{s}(\HH)$, the Sobolev space of order $s$; in this case, $P$ comes with some loss of regularity.)

In \cite{M1}, Mieczkowski showed that for smooth cocycles over certain actions on $(\SL(2,\RR) \times \SL(2,\RR))/\G$, where $\G$ is an irreducible lattice, the obstructions to solving the cohomology equation vanish, and one can find a smooth solution.  He considered actions by the subgroups
\[
A  = 
\left\{\left( \begin{array}{cc}
1 & r \\
0 & 1
\end{array}\right) \times
\left( \begin{array}{cc}
e^{t/2} & 0 \\
0 & e^{-t/2}
\end{array}\right) | r,t \in \RR \right\}
\]
and
\[
U  = 
\left\{\left( \begin{array}{cc}
1 & r \\
0 & 1
\end{array}\right) \times
\left( \begin{array}{cc}
1 & s \\
0 & 1
\end{array}\right) | r,s \in \RR \right\}.
\]
(His results also hold for cocycles in a Sobolev space.  Again, the solutions to the cohomology equation come with some loss of Sobolev order.)  Like the results of \cite{FF}, this result is achieved in any unitary representation of $\SL(2,\RR) \times \SL(2,\RR)$, provided the Casimir operator for that representation has a spectral gap.  He then applies this to the left-regular representation on $L^{2}((\SL(2,\RR) \times \SL(2,\RR))/\G)$.

Our first two results are similar to those of Mieczkowski's, replacing $\SL(2,\RR) \times \SL(2,\RR)$ with any noncompact simple Lie group $G$ with finite center.  First, we consider actions on a compact $G/\G$ arising from two commuting flows---one along a nilpotent element of the Lie algebra, and the other along a commuting semisimple element.  We prove the following

\begin{theoremA'} \label{aprime}
Let $G$ be a noncompact simple Lie group with finite center and Lie algebra $\g$, and let $\G \subset G$ be a cocompact lattice.  Let $\U \in \g$ be nilpotent and $\X \in \g$ be semisimple such that the eigenvalues of $\ad(\X)$ are not all purely imaginary, and $[\U,\X]=0$.  Then a smooth $\RR$-valued cocycle over the action by the flows of $\U$ and $\X$ on $G/\G$ is smoothly cohomologous to a constant cocycle.
\end{theoremA'}

Second, we consider smooth cocycles over unipotent actions.  We show that, in the case where the Lie group in question admits an embedding of $\overline{\SL(2,\RR)}^{k} \times \overline{\SL(2,\RR)}^{l}$ (where $\overline{\SL(2,\RR)}^{m}$ denotes an $m$-sheeted cover of $\SL(2,\RR)$), and the acting unipotent group contains the unipotent elements of $\overline{\SL(2,\RR)}^{k} \times \overline{\SL(2,\RR)}^{l}$, such cocycles are cohomologous to constant cocycles, via smooth transfer functions.  For the unipotent case, we do not require $\G \subset G$ to be cocompact.  We have

\begin{theoremB'} \label{bprime}
Let $G$ be a noncompact simple Lie group with finite center and Lie algebra $\g$, and suppose $\G \subset G$ is a lattice.  Suppose $\overline{\SL(2,\RR)}^{k} \times \overline{\SL(2,\RR)}^{l}$ embeds in $G$.  Consider $\U_{1} =  \big( \begin{smallmatrix} 0 & 1 \\ 0 & 0 \end{smallmatrix}\big) \times (0)$ and $\U_{2} = (0) \times \big( \begin{smallmatrix} 0 & 1 \\ 0 & 0 \end{smallmatrix}\big) \in \Sl(2,\RR) \times \Sl(2,\RR) \subset \g$.  Then a smooth $\RR$-valued cocycle over the action by the flows of $\U_{1}$ and $\U_{2}$ on $G/\G$ is smoothly cohomologous to a constant cocycle.
\end{theoremB'}

Using Theorem B', we prove

\begin{theoremC'} \label{cprime}
Let $G$, $\G$ and $\U_{1}$, $\U_{2}$ be as in Theorem B'.  Let $U \subset G$ be the rank-$2$ abelian subgroup generated by $\U_{1}$ and $\U_{2}$, and let $V \subset G$ be the maximal unipotent subroup containing $U$.  Then a smooth $\RR$-valued cocycle over the $V$-action on $G/\G$ is smoothly cohomologous to a constant cocycle.
\end{theoremC'}

Theorems A', B', and C' are stated in more generality in Section \ref{results}.  

\section{Background, Definitions, and Statement of Results}
In this section we give basic definitions and background.  We also state our main results.

\subsection{Cocycles}\label{cocycles}
The following definitions are standard, and can all be found in \cite{KR} and \cite{M2}.  For a survey of the uses of cocycles in dynamics, see \cite{KR}.

For a measurable action of a group $H$ on a measure space $(X, \mu)$, a $G$-\emph{valued degree $1$ cocycle} is defined to be a measurable map $\a: H \times X \rightarrow G$ satisfying
\begin{equation} \label{coc}
\a(h_{1}h_{2}, x) = \a(h_{1}, h_{2}x)\a(h_{2}, x),
\end{equation}
where $G$ is a group.  (It should be noted that all of the cocycles in this paper will be $\RR$-valued degree $1$ cocycles, and so we will often refer to them simply as cocycles, or $\RR$-valued cocycles, since there is no risk of confusion.)  Equation \eqref{coc} is called the \emph{cocycle identity}.  Occasionally, it will be convenient to think of an $\RR$-valued cocycle as being a map $\a: H \rightarrow \mathcal{F}(X)$ where $\mathcal{F}(X)$ denotes the measurable functions on $X$.  In this case, the cocycle identity is
\[ \a(h_{1}h_{2})(x) = \a(h_{1})(h_{2}x) + \a(h_{2})(x). \]
A $G$-cocycle whose image is the identity element in $G$ is called a \emph{trivial cocycle}.  A homomorphism $\phi:H \rightarrow G$ satisfies the cocycle identity by setting $\phi(h,x)=\phi(h)$, and is called a \emph{constant cocycle}.

Two $G$-cocycles $\a$ and $\b$ are said to be \emph{cohomologous} if there exists a measurable map $P:X \rightarrow G$ such that
\begin{equation} \label{coh}
\b(h,x) = P(hx)^{-1}\a(h,x)P(x).
\end{equation}
The map $P$ is referred to as a \emph{transfer function}, and \eqref{coh} is called the \emph{cohomology equation}.  (Notice that if the group $G$ is abelian, and $P$ satisfies equation \eqref{coh}, then so does $g \cdot P$ for any fixed $g \in G$.)  We say that a cocycle is a \emph{coboundary} if it is cohomologous to the trivial cocycle.  It is an \emph{almost coboundary} if it is cohomologous to a constant cocycle.  

This paper will be concerned exclusively with smooth $\RR$-valued cocycles over group actions on smooth manifolds.  Specifically, the acting group will be a connected Lie subgroup $H$ of a connected simple Lie group $G$, and the space $X$ will be $X=G/\G$, where $\G \subset G$ is a lattice.  For $\a$ to be a \emph{smooth cocycle}, we require that it be a smooth map in the usual sense, and that $\a(h,\_)$ be a smooth vector in $L^{2}(G/\G)$ for all $h \in H$.  That is, $\a(h,\_) \in C^{\infty}(L^{2}(G/\G))$.

In this context we can define the \emph{infinitesimal generator} of the cocycle $\a$ by $\o(\V) = \frac{d}{dt}\a(\exp t \V)|_{t=0}$.  The cocycle identity implies that $\o$ is a closed $1$-form on the $H$-orbits in $X$.  The cohomology equation then becomes $\o = \eta - dP$, where $P$ is the transfer function, and $\eta$ is another smooth cocycle.  Therefore, in this context, a cocycle $\a$ is cohomologically trivial if its associated $1$-form $\o$ is exact.  It should also be noted that if the cocycle $\a$ is cohomological to a constant cocycle, then that constant cocycle is given by
\[ c(h) = \int_{G/\G}\a(h,g)dg_{\G}. \]

Given a closed $1$-form on the $H$-orbit foliation of $X$, one can recover the cocycle $\a$ by $\a(\exp \V) = \int_{0}^{1}\o(\V) \cdot \exp t \V dt$.  Thus, the problem of determining which cocycles are cohomologically trivial can be translated to the problem of finding which closed $1$-forms on the $H$-orbits of $X$ are exact.  In fact, this point of view is the most useful for our purposes.

\subsection{Some useful theorems and definitions}

\subsubsection{Representations and Sobolev spaces}

One of the main tools we use to study the cohomology equation is the representation theory of semisimple Lie groups.  The following are some basic facts and definitions that can be found in \cite{Warner} and \cite{Knapp}.

Given a unitary representation $\pi: G \times \HH \rightarrow \HH$, one says that $v \in \HH$ is a \emph{smooth vector} if the map $g \mapsto \pi(g)v$ is smooth in the usual sense.  For the left-regular representation of a Lie group $G$ on $L^{2}(G/\G)$, where $\G \subset G$ is a lattice, a smooth vector is a smooth function $f \in L^{2}(G/\G)$ such that $\V^{k}f \in L^{2}(G/\G)$ for all $\V \in \mathrm{Lie}(G)$ and $k \in \NN$.  In this case, we write $f \in C^{\infty}(L^{2}(G/\G))$.  If $\G$ is cocompact, then the smooth vectors are exactly the smooth functions on $G/\G$.

It is often useful to consider a less restrictive subspace of the unitary representation $\HH$ of $G$, called the \emph{Sobolev space} of order $s \in \ZZ_{+}$, and denoted $W^{s}(\HH)$.  It is defined as the maximal domain of the operator $(I - \Delta)^{s/2}$, where $\Delta$ denotes the Laplacian from $G$.  $W^{s}(\HH)$ is a Hilbert space with inner product defined by
\[
<f,g>_{s} = <(I - \Delta)^{s} f, g>_{\HH}.
\]
Sobolev spaces of representations of $\SL(2,\RR)$ are of particular importance to this work, insofar as it is necessary to consider them in order to apply Theorem \ref{FF} \cite{FF}.

We recall the following theorem of Kolmogorov-Mautner \cite{Starkov}, which will allow us to restrict our attention to \emph{irreducible} unitary representations for much of our study.

\begin{theorem}[Kolmogorov-Mautner] \label{directintegral}
Given any any unitary representation $\pi$ of a locally compact second countable group $G$ in a separable Hilbert space $\HH$, there exists a Lebesgue-Stieltjes measure $d \mu$ on $\RR$ such that $\HH$ is the direct integral $\HH = \int_{\RR}{\HH_{\mu}d \mu}$ of Hilbert spaces $\HH_{\mu}$ with unitary representations $\pi_{\mu}$ of the group $G$ on $\HH_{\mu}$, where $\pi(g)f = \int_{\RR}{\pi_{\mu}(g)f_{\mu}d \mu}$.  For $d \mu$-almost all $\mu \in \RR$, the representation $\pi_{\mu}$ is irreducible.
\end{theorem}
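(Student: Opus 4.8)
The plan is to derive the decomposition from von Neumann's reduction theory, applied not to $\pi$ directly but to the commutant $\pi(G)' \subseteq \mathcal{B}(\HH)$. First I would choose, by Zorn's lemma, a von Neumann subalgebra $\mathcal{A} \subseteq \pi(G)'$ that is maximal among the abelian von Neumann subalgebras of $\pi(G)'$. The one elementary fact needed about such an $\mathcal{A}$ is the standard equivalence
\[
\mathcal{A} \text{ is maximal abelian in } \pi(G)' \iff \mathcal{A}' \cap \pi(G)' = \mathcal{A};
\]
the nontrivial implication holds because any self-adjoint $T \in \mathcal{A}' \cap \pi(G)'$ generates together with $\mathcal{A}$ an abelian von Neumann subalgebra of $\pi(G)'$, which by maximality must be $\mathcal{A}$ itself, so $T \in \mathcal{A}$.

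Next I would invoke the structure theory of abelian von Neumann algebras acting on a separable Hilbert space: $\mathcal{A}$ is $*$-isomorphic to $L^{\infty}$ of a standard measure space, and such a space is measure-isomorphic to $\RR$ with a suitable Lebesgue-Stieltjes measure $d\mu$. Von Neumann's theorem then provides a unitary identification $\HH \cong \int_{\RR}^{\oplus}\HH_{\mu}\,d\mu$ under which $\mathcal{A}$ becomes precisely the algebra of diagonalizable operators, i.e.\ multiplication by $L^{\infty}(\RR,d\mu)$. Since $\mathcal{A} \subseteq \pi(G)'$, every $\pi(g)$ commutes with $\mathcal{A}$ and is therefore a decomposable operator: $\pi(g) = \int_{\RR}^{\oplus}\pi_{\mu}(g)\,d\mu$. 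Using that $G$ is second countable, one picks a countable dense subgroup and arranges, off a single $d\mu$-null set, that the assignments $g \mapsto \pi_{\mu}(g)$ are simultaneously unitary and multiplicative; strong continuity in $g$ then promotes each $\pi_{\mu}$ to an honest unitary representation of $G$ on $\HH_{\mu}$, with $\pi = \int_{\RR}^{\oplus}\pi_{\mu}\,d\mu$ and $\pi(g)f = \int_{\RR}^{\oplus}\pi_{\mu}(g)f_{\mu}\,d\mu$.

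It remains to show that $\pi_{\mu}$ is irreducible for $d\mu$-almost every $\mu$, and here the choice of $\mathcal{A}$ inside $\pi(G)'$ pays off. A decomposable operator $T=\int_{\RR}^{\oplus}T_{\mu}\,d\mu$ lies in $\pi(G)'$ exactly when $T_{\mu}\in\pi_{\mu}(G)'$ for $d\mu$-almost every $\mu$, so in the language of measurable fields of von Neumann algebras one has $\mathcal{A}'\cap\pi(G)' = \int_{\RR}^{\oplus}\pi_{\mu}(G)'\,d\mu$. By the equivalence above the left side equals $\mathcal{A}$, whose fibers are the scalar operators $\CC\cdot I_{\HH_{\mu}}$. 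Comparing fibers forces $\pi_{\mu}(G)' = \CC\cdot I_{\HH_{\mu}}$ for $d\mu$-almost every $\mu$, which is the asserted irreducibility.

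The conceptual skeleton above is short; the main work, and the main obstacle, is measure-theoretic. One must set up (or quote) the machinery of measurable fields of Hilbert spaces and von Neumann algebras carefully enough to justify three points: that the spectral decomposition of $\mathcal{A}$ can be realized over $\RR$ with a Lebesgue-Stieltjes measure; that the pointwise formulas really define representations off a common null set, where second countability of $G$ is indispensable since it collapses uncountably many identities $\pi(gh)=\pi(g)\pi(h)$ to countably many; and that equality of two decomposable von Neumann algebras yields $d\mu$-a.e.\ equality of their fibers, which relies on the existence of measurable fields of generators. These are exactly the contents of von Neumann's reduction theory, and the cited source \cite{Starkov} carries them out in the form stated.
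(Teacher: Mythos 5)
The paper does not prove this theorem: it is quoted verbatim as a known result with a citation, so there is no internal proof to compare against. Your proposal is the standard von Neumann reduction-theory argument (maximal abelian subalgebra $\mathcal{A}$ of the commutant $\pi(G)'$, realization of $\mathcal{A}$ as the diagonal algebra over a standard measure space, decomposability of the $\pi(g)$, second countability to control null sets, and the identity $\mathcal{A}' \cap \pi(G)' = \mathcal{A}$ to force almost-everywhere triviality of the fiber commutants), and it is correct in outline, with the heavy measure-theoretic lifting correctly attributed to the reduction-theory machinery that the cited source supplies.
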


\subsubsection{Ergodicity}
Let $G$ be a Lie group, $\G \subset G$ a lattice. The flow along $\V \in \g := \mathrm{Lie}(G)$ on $G/\G$, which we will often denote $\phi_{t}^{\V}$, is said to be \emph{ergodic} if every $\V$-invariant measurable (with respect to Haar measure) subset of $G/\G$ is either a nullset or has full measure.  If $G$ is noncompact and semisimple, and $\G \subset G$ is irreducible, then the only elements of $\g$ whose flows are \emph{not} ergodic are semisimple elements $\V \in \g$ such that $\ad(\V)$ has purely imaginary eigenvalues \cite{Anosov}.  This follows from the Howe-Moore ergodicity theorem, which we quote from \cite{FK}.
\begin{theorem}[Howe-Moore] \label{HoweMoore}
Let $G$ be a noncompact simple Lie group with finite center and let $\G \subset G$ be a lattice in $G$.  Then any closed noncompact subgroup $H$ of $G$ acts ergodically on $G/\G$ by left translations.
\end{theorem}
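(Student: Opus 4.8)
This is the classical Howe--Moore theorem; the plan is to deduce the stated ergodicity from the mixing theorem (vanishing of matrix coefficients), which I would establish by the standard Cartan-decomposition and Mautner argument.  For the first reduction, note that $G$ acts transitively on $G/\G$, hence ergodically, so $\Lii(G/\G) = \CC\cdot 1 \oplus \HH$, where $\HH := \Lii_{0}(G/\G)$ carries the left-regular representation $\pi$ of $G$ with $\HH^{G}=0$.  Let $H\subset G$ be closed and noncompact and pick $h_{n}\to\infty$ in $H$; since $H$ is closed this sequence also leaves every compact subset of $G$.  If $f\in\Lii(G/\G)$ is $H$-invariant, write $f = c\cdot 1 + f_{0}$; then $f_{0}\in\HH$ is $H$-invariant, so $\langle\pi(h_{n})f_{0},f_{0}\rangle = \|f_{0}\|^{2}$ for all $n$.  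Thus, once we show that for any unitary representation $\pi$ with $\HH^{G}=0$ one has $\langle\pi(g)v,w\rangle\to0$ as $g\to\infty$, it follows that $f_{0}=0$, the only $H$-invariant functions are constants, and the $H$-action is ergodic.

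To prove the mixing statement I would argue by contradiction, supposing $|\langle\pi(g_{n})v,w\rangle|\geq\e$ with $g_{n}\to\infty$.  Using the Cartan decomposition $G = K\overline{A^{+}}K$ ($K$ a maximal compact subgroup, $A = \exp\aaa$ for a maximal $\RR$-split torus), write $g_{n} = k_{n}a_{n}l_{n}$ and pass to a subsequence with $k_{n}\to k$, $l_{n}\to l$ in the compact group $K$, so $a_{n}\to\infty$ in $\overline{A^{+}}$; by strong continuity of $\pi$ this reduces matters to $|\langle\pi(a_{n})v',w'\rangle|\geq\e/2$ with $v' = \pi(l)v$, $w' = \pi(k^{-1})w$.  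By weak-$*$ compactness of the unit ball, after a further subsequence $\pi(a_{n})v'\to\xi$ weakly, and $\langle\xi,w'\rangle\neq0$, so $\xi\neq0$.  Passing to one more subsequence we may assume that for each simple root $\a$ the sequence $\a(\log a_{n})$ either stays bounded or tends to $+\infty$; let $S$ be the (nonempty, since $a_{n}\to\infty$ in $\overline{A^{+}}$) set of $\a$ for which it tends to $+\infty$.  For a root $\b$ with $\b(\log a_{n})\to+\infty$ and $u$ in the root subgroup $U_{\b}$ one has $a_{n}^{-1}ua_{n}\to e$, and the Mautner computation
\[
\langle\pi(u)\xi,\eta\rangle = \lim_{n}\langle\pi(a_{n})\,\pi(a_{n}^{-1}ua_{n})\,v',\eta\rangle = \langle\xi,\eta\rangle \qquad\text{for all }\eta
\]
(using that the $\pi(a_{n})$ are unitary and $\pi(a_{n}^{-1}ua_{n})v'\to v'$) shows that $\xi$ is fixed by the horospherical subgroup $U^{+}$ generated by all such $U_{\b}$; in particular $\xi$ is fixed by $U_{\a}$ for every $\a\in S$.

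The step I expect to be the main obstacle is propagating this partial invariance to all of $G$.  Here I would invoke the Howe--Moore theorem for $\SL(2,\RR)$ --- equivalently, the classical fact that a unitary representation of a group locally isomorphic to $\SL(2,\RR)$ with no nonzero invariant vectors has no nonzero vector fixed by a one-parameter unipotent subgroup (this follows from the classification of the irreducible unitary representations of $\SL(2,\RR)$ together with Theorem \ref{directintegral}).  Applied to the $\SL(2,\RR)$-subgroup $G_{\gamma} = \langle U_{\gamma},U_{-\gamma}\rangle$ attached to a simple restricted root $\gamma$, it gives: if $\xi$ is fixed by $U_{\gamma}$, then $\xi$ is fixed by all of $G_{\gamma}$, hence by $U_{-\gamma}$.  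Starting from $S$, this puts $\g_{\pm\a}$ and $[\g_{\a},\g_{-\a}]$ into the Lie algebra $\h$ of the stabilizer of $\xi$ for every $\a\in S$.  Now walk along the Dynkin diagram: if $\a\in S$ and the simple root $\b$ is adjacent to $\a$, then $\a+\b$ is a root whose root subgroup lies in $U^{+}$, so $\g_{\b} = [\g_{-\a},\g_{\a+\b}]\subset\h$, and then $\g_{-\b}\subset\h$ by the $\SL(2,\RR)$ fact applied to $G_{\b}$.  Because $G$ is simple its Dynkin diagram is connected, so this reaches every simple root; hence $\h$ contains $\g_{\pm\a}$ for all simple $\a$, so $\h=\g$, $\xi$ is $G$-fixed, and therefore $\xi=0$, contradicting $\langle\xi,w'\rangle\neq0$.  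Apart from the Cartan decomposition, weak compactness, and the elementary Mautner identity, the substantive inputs are the $\SL(2,\RR)$ base case and this last propagation, which is exactly where the simplicity of $G$ is used.
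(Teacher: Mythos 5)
The paper does not prove this statement at all: Theorem \ref{HoweMoore} is quoted verbatim from the literature (the reference \cite{FK}) as background, and is used as a black box. So there is no ``paper's proof'' to compare against; what you have written is a proof outline of the classical Howe--Moore theorem itself. Your outline follows the standard route and is essentially sound: the reduction of ergodicity to vanishing of matrix coefficients on $\Lii_{0}(G/\G)$ is correct (closedness of $H$ is used exactly where you use it, to ensure $h_{n}\to\infty$ in $G$); the Cartan decomposition plus weak-$*$ limit plus the elementary Mautner identity correctly yields a nonzero weak limit $\xi$ fixed by every restricted root subgroup $U_{\b}$ with $\b(\log a_{n})\to+\infty$; and the $\SL(2,\RR)$ fact (an $N$-fixed vector is $G$-fixed, in any unitary representation) is the right base case. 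Two points deserve more care than your sketch gives them. First, in the propagation step you assert $\g_{\b}=[\g_{-\a},\g_{\a+\b}]$; for \emph{restricted} root spaces, which carry multiplicities, this equality is a genuine structural fact that needs justification (it is true, but it is not the one-dimensional root-space computation of the complex case), and an alternative that avoids it is to note that $\mathrm{Stab}(\xi)$ contains a representative of the reflection $s_{\a}$ for each $\a\in S$ and to move root subgroups around with the Weyl group. Second, deducing the $\SL(2,\RR)$ base case from the classification of irreducible unitary representations together with Theorem \ref{directintegral} requires the (standard but not free) fact that the $N$-fixed subspace of a direct integral is the direct integral of the fiberwise $N$-fixed subspaces; a direct Mautner-type argument using $a_{s}u_{t}a_{-s}=u_{e^{s}t}$ is cleaner. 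Neither issue is a fatal gap, but both are the places a referee would push.
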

We will make extensive use of Theorem \ref{HoweMoore} throughout this work.

\subsubsection{Partially hyperbolic flows}
A flow $\phi_{t}$ on a smooth manifold $M$ is \emph{partially hyperbolic} if there exists a splitting
\[
TM = E^{-} \oplus E^{0} \oplus E^{+},
\]
and constants $A, B, \L_{-}, \L_{+} \in \RR_{+}$ such that
\[
\left\| d\phi_{t}(\V) \right\| \leq A \cdot e^{-t \L_{-}} \cdot \left\| \V \right\|
\]
for all $\V \in E^{-}$ and $t >0$, and
\[
\left\| d\phi_{-t}(\W) \right\| \leq B \cdot e^{-t \L_{+}} \cdot \left\| \W \right\|
\]
for all $\W \in E^{+}$ and $t > 0$.  $E^{-}$ and $E^{+}$ are called the \emph{stable} and \emph{unstable} distributions for the flow $\phi_{t}$.  These integrate to the stable and unstable foliations, $W^{-}$ and $W^{+}$.

If $G$ is a noncompact semisimple Lie group, and $\G \subset G$ is an irreducible lattice, then the flow $\phi_{t}^{\X}$ on $G/\G$ is partially hyperbolic for any semisimple $\X \in \g:=\mathrm{Lie}(G)$ whose roots are not all purely imaginary.  The distributions $E^{-}$ and $E^{+}$ are invariant under translation on the right by group elements, and so we can identify them with subspaces of the Lie algebra $\g$ of right-invariant vector fields on $G$.  We will often make this identification implicitly; that is, we will write
\[
\g = E^{-} \oplus E^{0} \oplus E^{+},
\]
and refer to elements of the distributions $E^{\pm}$ as though they are members of the Lie algebra $\g$.  It should be understood that we are really referring to the elements' images in $\g$ under this identification by taking right-translates.

\subsubsection{Smooth functions}
The following result of A. Katok and R. J. Spatzier \cite{KS} will allow us to bootstrap the regularity of certain functions (those that have derivatives in a spanning set of directions) to smoothness.
\begin{theorem}[Katok-Spatzier] \label{KaSp}
let $D_{1}, \ldots, D_{k}$ be $C^{\infty}$ plane fields on a manifold $M$ such that their sum $\sum_{i=1}^{k}D_{i}$ is totally non-integrable and satisfies the following condition:
For each $j$, the dimension of the space spanned by the commutators of length at most $j$ at each point is constant in a neighborhood.
Let $P$ be a distribution on $M$.  Assume that for any positive integer $p$ and $C^{\infty}$ vector field $X$ tangent to any $D_{j}$, the $p^{\mathrm{th}}$ partial derivative $X^{p}(P)$ exists as a continuous or local $L^{2}$ function.  Then $P$ is $C^{\infty}$ on $M$.
\end{theorem}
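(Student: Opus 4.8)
The plan is to deduce this from H\"ormander's theorem on hypoellipticity of sums of squares of vector fields. Since the conclusion is local, I would fix a point of $M$ and work in a coordinate chart. There I would pick finitely many $C^{\infty}$ vector fields $X_{1}, \dots, X_{m}$, each tangent to one of the $D_{j}$, that together span the distribution $\sum_{i} D_{i}$ at every nearby point; the hypothesis that $\sum_{i} D_{i}$ is totally non-integrable with locally constant commutator dimensions is exactly the assertion that $X_{1}, \dots, X_{m}$ satisfy H\"ormander's bracket-generating condition, with a fixed step $r$. Setting $\mathcal{L} = -\sum_{i=1}^{m} X_{i}^{2}$, H\"ormander's theorem then gives that $\mathcal{L}$ is hypoelliptic and satisfies subelliptic estimates: there is $\epsilon = 1/r > 0$ such that $\|v\|_{s+\epsilon} \leq C_{s}(\|\mathcal{L}v\|_{s} + \|v\|_{s})$ in local Sobolev norms, for suitably nested cutoffs.

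The aim is then to show that $P \in W^{s}_{\mathrm{loc}}$ for every $s$ --- equivalently, that $\mathcal{L}^{N}P \in L^{2}_{\mathrm{loc}}$ for every $N$ --- after which Sobolev embedding gives $P \in C^{\infty}$ near the point, and hence on $M$. The obstruction is that one cannot simply expand $\mathcal{L}^{N}$ as a sum of words $X_{i_{1}} \cdots X_{i_{2N}}$ and invoke the hypothesis term by term: the hypothesis supplies only the \emph{pure} iterated derivatives $X^{p}(P)$, not the mixed ones. So the first real step would be to extract everything the hypothesis does give. Taking $X = \sum_{i} c_{i} X_{i}$ with the $X_{i}$ ranging over a frame of a single $D_{j}$ and polarizing in the constants $c_{i}$ recovers every symmetrized second-order product of those fields; when $D_{j}$ is involutive --- which holds in all of the applications in this paper, where the $D_{j}$ are stable, unstable, or one-dimensional orbit distributions --- the Frobenius theorem lets one absorb the first-order commutator corrections, and an induction on word length shows that $w(P) \in L^{2}_{\mathrm{loc}}$ for every word $w$ in fields tangent to a single $D_{j}$. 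In particular $\mathcal{L}_{j}^{N}P \in L^{2}_{\mathrm{loc}}$ for each $j$ and $N$, where $\mathcal{L}_{j}$ is the leafwise sub-Laplacian of $D_{j}$ and $\mathcal{L} = \sum_{j} \mathcal{L}_{j}$.

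With the operators $\mathcal{L}_{j}$ in hand I would run a bootstrap. From $\mathcal{L}P = \sum_{j} \mathcal{L}_{j}P \in L^{2}_{\mathrm{loc}}$ and the subelliptic estimate one gets $P \in W^{\epsilon}_{\mathrm{loc}}$. To iterate, one must control $\mathcal{L}(\mathcal{L}_{j}P)$ and, more generally, the mixed strings $\mathcal{L}_{j_{1}} \cdots \mathcal{L}_{j_{N}}P$; the mechanism is that each commutator $[\mathcal{L}_{j}, \mathcal{L}_{j'}]$ is a $C^{\infty}$ differential operator of order at most three, so commuting the $\mathcal{L}_{j}$ past one another rewrites any mixed string in terms of monochromatic strings applied to $P$ (controlled by the previous step) plus strictly lower-order errors (controlled by the inductive hypothesis). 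Each pass through the subelliptic estimate then raises the Sobolev order of $P$ by $\epsilon$, so $P \in W^{s}_{\mathrm{loc}}$ for all $s$ and the argument closes. An alternative that bypasses the involutivity discussion is to note that the hypothesis already gives $\sum_{i} X_{i}^{2n}P \in L^{2}_{\mathrm{loc}}$ for every $n$ and to appeal to hypoellipticity of the higher-order operators $\sum_{i} X_{i}^{2n}$, whose subelliptic gain grows without bound with $n$.

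I expect the main obstacle to be precisely this passage from directional to joint regularity. The hypothesis is deceptively weak --- it controls only pure powers along the individual plane fields --- so the substance of the proof lies in the hypoellipticity of sums of (even powers of) a bracket-generating family of vector fields, together with the careful accounting of the lower-order commutator terms that appear when one commutes leafwise-elliptic operators past each other. Everything else is a routine localization-and-patching argument or a direct citation of H\"ormander's theorem and Sobolev embedding.
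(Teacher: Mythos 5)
The paper does not prove Theorem \ref{KaSp}: it is imported verbatim from Katok--Spatzier \cite{KS} and used as a black box, so there is no in-paper proof to compare against, and I evaluate your argument on its own terms. The framework you choose --- localize, pick a H\"ormander frame spanning $\sum_j D_j$, invoke hypoellipticity of $\mathcal{L} = -\sum_i X_i^2$ --- is the right circle of ideas, and you correctly identify the central difficulty, namely that the hypothesis controls only pure powers $X^p P$ along a single plane field and not mixed words. The gap is in how you cross that difficulty. The bootstrap you describe does not close: commuting the leafwise operators $\mathcal{L}_j$ past one another only sorts a mixed string $\mathcal{L}_{j_1}\cdots\mathcal{L}_{j_N}$ into a normal-ordered product $\mathcal{L}_1^{a_1}\cdots\mathcal{L}_k^{a_k}$ plus commutator errors, and a normal-ordered product is still mixed --- there is no way to reduce $\mathcal{L}_1\mathcal{L}_2 P$ to monochromatic terms by commuting. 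Moreover the commutator errors are generic third-order smooth operators applied to $P$, which the inductive hypothesis (at the first stage, only $P \in W^{\epsilon}_{\mathrm{loc}}$ with $\epsilon<1$) does not control, so the ``strictly lower-order errors'' are not in fact dominated. The iteration never gets off the ground.

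Two further points. First, the polarization step you use to recover mixed words inside a single $D_j$ invokes involutivity of $D_j$; that is not among the theorem's hypotheses, so as written you would at best prove a weaker statement than the one quoted (you acknowledge this but do not repair it). Second, and more constructively: the ``alternative'' you mention in passing --- hypoellipticity of the higher-order operators $\sum_i X_i^{2n}$ with a subelliptic gain that grows with $n$ --- is in fact the substance of Katok and Spatzier's own argument, and it sidesteps both problems at once, since $\sum_i X_i^{2n}P\in L^2_{\mathrm{loc}}$ follows directly from the hypothesis (each $X_i$ is tangent to some $D_j$, so each $X_i^{2n}P\in L^2_{\mathrm{loc}}$), no polarization, no involutivity, and no cross-field bootstrap needed. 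As written, though, that remark is an unsupported assertion; the real work of the theorem lies precisely in establishing that subelliptic estimate for $\sum_i X_i^{2n}$, not in the localization-and-patching you describe as routine.
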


\subsection{Main results} \label{results}
Let $G = G_{1} \times \cdots \times G_{k}$ be a product of noncompact simple Lie groups, $\g = \g_{1} \oplus \cdots \oplus \g_{k}$ its Lie algebra, where $\g_{i} := \mathrm{Lie}(G_{i})$ for $i = 1, \ldots, k$.  We prove the following theorems.

\begin{result} \label{nilpotentsemisimple}
Suppose $\G \subset G$ is a cocompact irreducible lattice.  Suppose $\U \in \g$ is nilpotent and $\X \in \g$ is semisimple such that $[\U,\X] = 0$ and each $\g_{i}$ contains stable and unstable vectors for the flow $\phi_{t}^{\X}$.  Then any smooth $\RR$-valued cocycle over the action by $\RR^{2}$ on $G/\G$ defined by the flows $\phi_{t}^{\U}$ and $\phi_{t}^{\X}$ is cohomologous to a constant cocycle, via a smooth transfer function.
\end{result}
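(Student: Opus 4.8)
The plan is to follow the scheme of \cite{FF, KS2, M1}: pass to the infinitesimal picture, solve the cohomology equation in each irreducible unitary representation by showing that the Flaminio--Forni obstructions annihilate the cocycle, and then upgrade the resulting transfer function to a smooth one using the partially hyperbolic structure of $\phi_{t}^{\X}$ together with Theorem \ref{KaSp}. As in Section \ref{cocycles}, a smooth cocycle over the $\RR^{2}$-action is the same data as a closed $1$-form on the orbit foliation, i.e.\ a pair of smooth functions $f = \o(\U)$, $g = \o(\X)$ on $G/\G$, and closedness together with $[\U,\X]=0$ gives the compatibility relation $\X f = \U g$. We must find smooth $P$ with $\U P = f - c_{1}$ and $\X P = g - c_{2}$, where $c_{1},c_{2}$ are the averages of $f,g$ (these produce the constant cocycle $c(h)=\int_{G/\G}\a(h,\cdot)$). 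Using Theorem \ref{directintegral}, decompose $L^{2}(G/\G)$ into irreducible unitary $G$-representations; the trivial one absorbs $c_{1},c_{2}$, so it suffices to work in a fixed nontrivial irreducible $\HH$, solve $\U P = f$ there with estimates uniform over the decomposition, and reassemble. Moreover, once $\U P = f$ holds in $\HH$, the vector $\X P - g$ is killed by $\U$, since by $[\U,\X]=0$ we get $\U(\X P - g) = \X\U P - \U g = \X f - \U g = 0$; as $\{\exp t\U\}$ is a noncompact subgroup, Theorem \ref{HoweMoore} forces $\X P - g = 0$ in a nontrivial irreducible representation. So the whole problem reduces to solving $\U P = f$ in each nontrivial irreducible $\HH$.

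Since $\X$ is semisimple, its centralizer $\z_{\g}(\X)$ is reductive and contains the nilpotent element $\U$; by the Jacobson--Morozov theorem applied inside $\z_{\g}(\X)$ we obtain $H,\V \in \z_{\g}(\X)$ with $(\U,H,\V)$ an $\Sl(2,\RR)$-triple, all commuting with $\X$. This gives an action of a finite cover of $\SL(2,\RR)$ on $G/\G$ commuting with $\phi_{t}^{\X}$, under which each nontrivial irreducible $\HH$ becomes a representation with a Casimir spectral gap (available here via arithmeticity of $\G$ and property $(\tau)$, or property (T) when $G$ has no rank-one factor). By Theorem \ref{FF}, the obstructions to solving $\U P = f$ in $\HH$ are exactly the $\U$-invariant distributions, with a Sobolev estimate on $P$ when $f$ is annihilated by all of them. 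The heart of the matter is to prove this annihilation: for a $\U$-invariant distribution $D$, applying $D$ to $\X f = \U g$ and using $\U$-invariance of $D$ gives $(\X'D)(f)=0$; because $\X$ commutes with the whole $\Sl(2,\RR)$-triple it preserves the finite-dimensional space of $\U$-invariant distributions of each bounded Sobolev order and acts there with weights controlled by the hyperbolic part of $\ad(\X)$, and — following \cite{M1} — one checks this action is invertible, so $D(f)=0$. Reassembling the per-representation solutions with the uniform estimates yields $P \in L^{2}(G/\G)$ of finite Sobolev order with $\U P = f$ and $\X P = g$.

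It remains to promote $P$ to a smooth function. The equations $\U P = f$ and $\X P = g$ already supply all derivatives of $P$ along $\U$ and along $\X$, since $\U^{k}P$ and $\X^{k}P$ are derivatives of the smooth functions $f$ and $g$. Because $\phi_{t}^{\X}$ is partially hyperbolic (the eigenvalues of $\ad(\X)$ are not all imaginary), one runs the telescoping argument of \cite{KS2} to show that every derivative of $P$ along the stable and unstable directions $E^{-},E^{+}$ exists as a continuous or local $L^{2}$ function. The subalgebra of $\g$ generated by $E^{-}\cup E^{+}$ is an ideal: it is $\ad(E^{0})$-invariant because $[E^{0},E^{\pm}]\subseteq E^{\pm}$, and it is invariant under its own adjoint action. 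By hypothesis it meets every simple factor $\g_{i}$ nontrivially, hence contains each $\g_{i}$, hence equals $\g$; this is precisely where the assumption ``each $\g_{i}$ contains stable and unstable vectors for $\phi_{t}^{\X}$'' is used. The bracket-generating and constant-dimension conditions of Theorem \ref{KaSp} hold automatically since the plane fields are right-invariant, so Theorem \ref{KaSp} gives $P \in C^{\infty}(G/\G)$, the desired smooth transfer function, and the cocycle is cohomologous to $c$.

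The main obstacle is the vanishing of the Flaminio--Forni obstructions. In \cite{M1} the elements $\U$ and $\X$ lie in different $\SL(2,\RR)$ factors, so the $\U$-invariant distributions and the $\X$-action live on separate tensor legs and the cancellation is essentially formal; here $\U$ and $\X$ may sit inside the same simple factor and be genuinely entangled (for instance $\U = E_{13}$ and $\X = \mathrm{diag}(1,-2,1)$ in $\Sl(3,\RR)$), so one must analyze each nontrivial irreducible $\HH$ as a joint module over the $\Sl(2,\RR)$-triple and over $\X$, pin down the $\U$-invariant distributions through Flaminio--Forni's parametrization by the Casimir, and verify that $\X$ acts invertibly on them. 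Closely related technical points are establishing the Casimir spectral gap for the constructed $\Sl(2,\RR)$ when $G$ has rank-one factors, and making the Flaminio--Forni estimates uniform enough across the direct integral that the per-representation solutions glue into a single $L^{2}$ solution on $G/\G$ before the regularity step.
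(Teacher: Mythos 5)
Your proposal has all the right structural ingredients but a genuine gap at precisely the step you flag as ``the main obstacle,'' and the paper resolves that obstacle in a way you did not consider. You apply Jacobson--Morozov inside $\z_{\g}(\X)$ to obtain an $\Sl(2,\RR)$-triple commuting with $\X$ --- this matches the paper's Theorem \ref{JM} --- but you then decompose $L^{2}(G/\G)$ under the full group $G$, and so are forced to grapple with the ``entanglement'' of $\U$ and $\X$ inside a single irreducible $G$-module. The paper instead decomposes under the \emph{product subgroup} $H = H_{1} \times H_{2}$, where $H_{1}$ is the (cover of) $\SL(2,\RR)$ coming from the triple and $H_{2} = \exp(\RR\X)$ is the one-parameter group generated by $\X$. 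Since $H$ is a genuine direct product and $H_{2}$ is abelian, every irreducible unitary $H$-representation factors as $\HH_{\mu} \otimes \HH_{\theta}$ with $\dim \HH_{\theta} = 1$ and $\X$ acting by the scalar $i\theta$. The $\U$-invariant distributions live on the first tensor leg and $\X$ acts as a nonzero scalar on the second (nonzero by ergodicity of $\phi_{t}^{\X}$), so the vanishing of obstructions is exactly as ``formal'' as in Mieczkowski's two-factor setup: for $D \in \II_{\U}(W^{s}(\HH_{\mu}))$, set $\bar{D} = D \otimes 1$; then $\X\bar{D}(f) = \bar{D}(\X f) = \bar{D}(\U g) = 0$ forces $\bar{D}(f) = 0$, hence $D(f_{\mu}) = 0$. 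This is the paper's Lemma \ref{flaminiolemma} (Flaminio's lemma), and it completely eliminates the difficulty you raise with the $\Sl(3,\RR)$ example.

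Your proposed alternative --- showing that $\X$ acts invertibly on the finite-dimensional space of $\U$-invariant distributions of bounded order --- is not worked out and is likely to be delicate, since the weights of $\ad(\X)$ on those distributions would have to be analyzed representation by representation. The tensor-product trick avoids this entirely. Two smaller mismatches: the spectral gap is obtained in the paper via Kleinbock--Margulis plus Shalom (Theorem \ref{spectralgap}), which is cleaner and more uniform than invoking property $(\tau)$ or (T) conditionally; and the paper first solves $\U P = f$ globally by gluing $P_{\mu,\theta} = P_{\mu} \otimes v_{\theta}$ with uniform Sobolev estimates, and only afterward derives $\X P = g$ from ergodicity of the $\U$-flow on $G/\G$, rather than establishing both equations representation-by-representation as you propose. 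Your Howe--Moore route for the second equation is plausible but again not the path taken. The smoothness argument you sketch (Lipschitz along stable/unstable leaves via a Livsic-type telescoping argument, spanning of $\g$ by $E^{-} \cup E^{+}$, then Katok--Spatzier) is a correct summary of Lemmas \ref{lipschitz}, \ref{livsic}, and \ref{spanning}.
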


\begin{result} \label{nilpotentnilpotent}
Suppose $G$ admits an embedding of $\overline{\SL(2,\RR)}^{k} \times \overline{\SL(2,\RR)}^{l}$, and $\G \subset G$ is an irreducible lattice.  Consider $\U_{1} =  \big( \begin{smallmatrix} 0 & 1 \\ 0 & 0 \end{smallmatrix}\big) \times (0)$ and $\U_{2} = (0) \times \big( \begin{smallmatrix} 0 & 1 \\ 0 & 0 \end{smallmatrix}\big) \in \Sl(2,\RR) \times \Sl(2,\RR) \subset \g$.  If the projection of $\U_{1} + \U_{2}$ to $\g_{i}$ is nonzero for all $i = 1, \ldots, k$, then any smooth $\RR$-valued cocycle over the action by $\RR^{2}$ on $G/\G$ defined by the flows $\phi_{t}^{\U_{1}}$ and $\phi_{t}^{\U_{2}}$ is cohomologous to a constant cocycle, via a smooth transfer function.
\end{result}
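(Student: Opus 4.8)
The plan is to reduce to irreducible unitary representations of $G$, solve the cohomology equation there with the theorem of Flaminio and Forni, reassemble, and finish with a regularity bootstrap in the spirit of \cite{KS2}. Encode the cocycle by its infinitesimal generator, a closed $1$-form $\o$ on the $\RR^{2}$-orbit foliation of $G/\G$ with components $f_{1}=\o(\U_{1})$, $f_{2}=\o(\U_{2})$ in $C^{\infty}(\Lii(G/\G))$; as $[\U_{1},\U_{2}]=0$, closedness is the single relation $\U_{1}f_{2}=\U_{2}f_{1}$. Subtracting the constant cocycle $c(h)=\int_{G/\G}\a(h,g)\,dg_{\G}$ reduces us to finding $P\in C^{\infty}(\Lii_{0}(G/\G))$ with $\U_{1}P=f_{1}$, $\U_{2}P=f_{2}$, where $\Lii_{0}$ is the subspace of functions with zero integral. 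By Theorem~\ref{directintegral} it suffices to do this, with uniform Sobolev estimates, in a fixed irreducible representation $\HH$ occurring in $\Lii_{0}(G/\G)$. Two facts are used throughout. First, since $\G$ is irreducible every noncompact factor $G_{i}$ acts ergodically on $G/\G$ (Howe--Moore; cf.~Theorem~\ref{HoweMoore}), so $\Lii_{0}(G/\G)$ has no $G_{i}$-invariant vector; hence, writing $\HH=\HH^{(1)}\otimes\cdots\otimes\HH^{(k)}$, each $\HH^{(i)}$ is nontrivial, and $\HH$ therefore admits no nonzero vector invariant under any noncompact subgroup of $G$. Second, since $\G$ is a lattice in a semisimple group, $\Lii_{0}(G/\G)$ has a spectral gap for the Casimir operators of the embedded $\overline{\SL(2,\RR)}$-factors, which makes the estimates below uniform.

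Restrict $\HH$ to the commuting subgroup $L_{1}\times L_{2}:=\overline{\SL(2,\RR)}^{k}\times\overline{\SL(2,\RR)}^{l}$ and write it as a direct integral of completed tensor products $\HH_{1}\otimes\HH_{2}$ of irreducible $L_{1}$- and $L_{2}$-representations; in each fibre $\U_{1}$ acts as $\U_{1}^{(1)}\otimes I$ and $\U_{2}$ as $I\otimes\U_{2}^{(2)}$, both nonzero nilpotents since $\HH_{1},\HH_{2}$ are nontrivial. The crucial point is that $f_{1}$ is annihilated by every $\U_{1}$-invariant distribution: fibrewise such a distribution is $D_{1}\otimes I$ with $D_{1}$ a $\U_{1}^{(1)}$-invariant distribution on $\HH_{1}$, and applying $D_{1}\otimes I$ to $(\U_{1}^{(1)}\otimes I)f_{2}=(I\otimes\U_{2}^{(2)})f_{1}$, together with $D_{1}\circ\U_{1}^{(1)}=0$, yields $\U_{2}^{(2)}\big((D_{1}\otimes I)f_{1}\big)=0$; since $\HH_{2}$ is a nontrivial $L_{2}$-representation and $\U_{2}^{(2)}$ generates a noncompact subgroup, $(D_{1}\otimes I)f_{1}=0$. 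The theorem of Flaminio and Forni \cite{FF} (applied with the spectral gap, and extended to the finite covers $\overline{\SL(2,\RR)}^{m}$) then provides, fibrewise and hence in $\HH$, a solution $P_{1}$ of $\U_{1}P_{1}=f_{1}$ with $\|P_{1}\|_{s}\ll\|f_{1}\|_{s+\sigma}$ for a fixed $\sigma$; symmetrically one gets $P_{2}$ with $\U_{2}P_{2}=f_{2}$. They agree: from $\U_{2}\U_{1}P_{2}=\U_{1}\U_{2}P_{2}=\U_{1}f_{2}=\U_{2}f_{1}$, the vector $\U_{1}P_{2}-f_{1}$ is $\U_{2}$-invariant, hence $0$, so $\U_{1}(P_{1}-P_{2})=0$; likewise $\U_{2}(P_{1}-P_{2})=0$; then $P_{1}-P_{2}$ is invariant under the noncompact group generated by $\U_{1}+\U_{2}$, hence $0$. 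Set $P:=P_{1}=P_{2}$.

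Finally, promote $P$ to a smooth vector. Complete $\U_{1}$ to an $\Sl(2,\RR)$-triple $\{\U_{1},\X_{1},\overline{\U}_{1}\}$ in $\mathrm{Lie}(L_{1})$ and $\U_{2}$ to $\{\U_{2},\X_{2},\overline{\U}_{2}\}$ in $\mathrm{Lie}(L_{2})$. Using the bracket relations and the commuting of $\mathrm{Lie}(L_{1})$ with $\mathrm{Lie}(L_{2})$, one checks that $\X_{1}P,\overline{\U}_{1}P,\X_{2}P,\overline{\U}_{2}P$ are again transfer functions for cocycles of the same type, so by the uniqueness above they lie in the relevant Sobolev spaces; iterating shows $P$ has all derivatives along $\mathrm{Lie}(L_{1})\oplus\mathrm{Lie}(L_{2})$. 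For the remaining directions, take $\X_{a,b}=a\X_{1}+b\X_{2}$: since $\ad(\X_{a,b})(\U_{1})=a\U_{1}$, $\ad(\X_{a,b})(\U_{2})=b\U_{2}$, and $\U_{1}+\U_{2}$ has nonzero projection to each $\g_{i}$ (this is where the hypothesis enters), $\ad(\X_{a,b})$ has a nonzero real eigenvalue on every $\g_{i}$ when $ab\neq0$, so $\phi_{t}^{\X_{a,b}}$ is partially hyperbolic with stable and unstable distributions meeting every factor. After extending the cocycle over $\phi_{t}^{\X_{a,b}}$ (possible because the obstructions vanish), a standard Livsic-type argument along the stable and unstable leaves shows $P$ is differentiable to all orders along them. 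A finite family of such plane fields together with $\mathrm{Lie}(L_{1})$ and $\mathrm{Lie}(L_{2})$ generates $\g$ and is totally non-integrable on $G/\G$, so Theorem~\ref{KaSp} gives $P\in C^{\infty}$. Reassembling over the direct integral with the uniform estimates yields a smooth transfer function on $\Lii_{0}(G/\G)$; adding back the constant cocycle finishes the proof.

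The hardest step should be the regularity bootstrap of the last paragraph --- assembling enough partially hyperbolic flows $\phi_{t}^{\X_{a,b}}$ so that, together with the orbit foliations of $L_{1}$ and $L_{2}$, the hypotheses of Theorem~\ref{KaSp} hold, and controlling the regularity of $P$ along all of them. The obstruction-vanishing of the second paragraph is the conceptual core but follows a by-now-familiar pattern; the new difficulty relative to Mieczkowski's treatment of $(\SL(2,\RR)\times\SL(2,\RR))/\G$ is precisely that $\mathrm{Lie}(L_{1})\oplus\mathrm{Lie}(L_{2})$ is in general a proper subalgebra of $\g$, so the representation theory by itself does not reach every direction.
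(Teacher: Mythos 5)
Your representation-theoretic core is sound and, in spirit, retraces Mieczkowski's argument (which the paper simply cites as Theorem~\ref{Mi1}): decompose into irreducibles $\HH_1 \otimes \HH_2$, use the cocycle relation to kill $\U_1$-invariant distributions via the Flaminio-type lemma, apply Flaminio--Forni with a uniform spectral gap, and match the two solutions by ergodicity of the diagonal unipotent. That part is fine. But your regularity bootstrap in the third paragraph has a genuine gap, and it is precisely the step you flag as the hardest.

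The Livsic-type argument along the stable and unstable leaves of $\phi_t^{\X_{a,b}}$ does not go through in this setting, for two reasons. First, $\X_{a,b}$ does \emph{not} commute with $\U_1,\U_2$ --- indeed $\ad(\X_{a,b})\U_i = (\text{nonzero scalar})\,\U_i$ --- so there is no abelian cocycle over the group generated by $\U_1,\U_2,\X_{a,b}$, and ``extending the cocycle over $\phi_t^{\X_{a,b}}$'' is not a meaningful operation; all you actually have is the $L^2$ function $g_{a,b}:=\X_{a,b}P$, which exists because $\X_{a,b}\in\mathrm{Lie}(L_1)\oplus\mathrm{Lie}(L_2)$. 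Second, even granting $g_{a,b}$, the Livsic argument of Lemmas~\ref{lipschitz} and \ref{livsic} rests on a global Lipschitz bound for $\X P$, which there is obtained from smoothness of $\X P$ together with \emph{compactness} of $G/\G$; your $g_{a,b}$ is only known to have derivatives in the $L_1\times L_2$ directions, not transversally, so it is not Lipschitz on $G/\G$, and in any case Theorem~\ref{nilpotentnilpotent} does not assume $\G$ cocompact. The paper's remark after the statement of the theorems makes this explicit: the unipotent case is handled by a method that ``does not require compactness of the space.''

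The paper's route avoids all this. Rather than introducing a partially hyperbolic auxiliary flow, it observes that conjugating the embedded $H=\overline{\SL(2,\RR)}^k\times\overline{\SL(2,\RR)}^l$ by any $z\in Z(U)$ gives a new embedding $i'$ agreeing with $i$ on $U$; Mieczkowski's theorem applied to $i'$ gives a transfer function $Q$ smooth along $i'(H)$-orbits, and ergodicity of $U$ forces $Q=P$ (up to an additive constant). So $P$ is smooth in the $L^2$ sense along $\Ad(Z(U))\h$-directions, and Proposition~\ref{prop} shows these generate $\g$. This uses only the representation-theoretic input (no Anosov-type shadowing), hence works for non-uniform lattices. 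If you want to keep your partially-hyperbolic bootstrap, you would have to (a) restrict to cocompact $\G$, losing generality, and (b) break the circularity by first establishing, independently of the Livsic step, that $g_{a,b}$ has the transversal regularity needed for the Lipschitz estimate --- which is essentially the conclusion you are trying to reach.
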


\begin{remark}
Observe that in Theorem \ref{nilpotentsemisimple}, we require the lattice to be cocompact, whereas in Theorem \ref{nilpotentnilpotent} we do not.  For Theorem \ref{nilpotentsemisimple} we only use cocompactness in Section \ref{smoothnessofP} to show that the transfer functions are smooth.  In the proof of Theorem \ref{nilpotentnilpotent}, we use a different method---one that does not require compactness of the space---to establish the smoothness of transfer functions.
\end{remark}

\begin{remark}
Notice that Theorems A' and B' in the Introduction (Section \ref{intro}) are the same theorems as above, in the case where $G$ is simple.
\end{remark}

In \cite{M1} and \cite{M2}, Mieczkowski proved Theorems \ref{nilpotentsemisimple} and \ref{nilpotentnilpotent} for the case where $G = \SL(2,\RR) \times \SL(2,\RR)$.  The result was achieved using tools from the unitary representation theory of $\SL(2,\RR)$.  Mieczkowski's results are essential to our work.

Given an embedding $\overline{\SL(2,\RR)}^{k} \times \overline{\SL(2,\RR)}^{l} \hookrightarrow G$, one can consider a maximal unipotent subgroup $V \subset G$ containing the unipotent elements of the embedded $\overline{\SL(2,\RR)}^{k} \times \overline{\SL(2,\RR)}^{l}$ obtained by exponentiating $\U_{1}$ and $\U_{2}$.  Using Theorem \ref{nilpotentnilpotent}, we prove

\begin{result} \label{maximalunipotent}
Let $G$, $\G$ and $\U_{1}$, $\U_{2}$ be as in Theorem \ref{nilpotentnilpotent}.  Let $U \subset G$ be the rank-$2$ abelian subgroup generated by $\U_{1}$ and $\U_{2}$, and let $V \subset G$ be the maximal unipotent subroup containing $U$.  Then a smooth $\RR$-valued cocycle over the $V$-action on $G/\G$ is cohomologous to a constant cocycle via a smooth transfer function $P \in \Cinf(G/\G)$.
\end{result}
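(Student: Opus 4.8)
The plan is to reduce to Theorem~\ref{nilpotentnilpotent} for the rank-$2$ subgroup $U$, and then to propagate cohomological triviality to the whole maximal unipotent group $V$, using that $\vv:=\mathrm{Lie}(V)$ is a nilpotent Lie algebra together with the ergodicity of unipotent flows. So I would start with a smooth $\RR$-valued cocycle $\b$ over the $V$-action on $G/\G$, with infinitesimal generator $\o$, a closed $1$-form on the $V$-orbit foliation. Restricting $\b$ to the subaction of $U=\langle\exp\U_1,\exp\U_2\rangle$ and applying Theorem~\ref{nilpotentnilpotent} produces a smooth transfer function $P\in\Cinf(G/\G)$ and constants $c_1,c_2$ with $\o(\U_i)-\U_iP=c_i$ for $i=1,2$. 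Setting $\eta:=\o-dP$, which is again a closed $1$-form on the $V$-orbit foliation with $\eta(\U_1),\eta(\U_2)$ constant, the goal becomes to show that $\eta(\W)$ is constant for every $\W\in\vv$: once that is done, the cocycle $\b'(v,x):=\b(v,x)-P(vx)+P(x)$ has generator $\eta$ with constant values on all of $\vv$, hence is a constant cocycle (indeed $v\mapsto\int_{G/\G}\b(v,g)\,dg_\G$), and $\b$ is cohomologous to a constant cocycle via the smooth transfer function $P$.

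To establish the reduced claim, fix $\W\in\vv$. Closedness of $\eta$ along $V$-orbits gives $\U_1\,\eta(\W)-\W\,\eta(\U_1)=\pm\,\eta([\U_1,\W])$ (the sign depending only on conventions for vector-field versus Lie brackets, and being immaterial below), and since $\eta(\U_1)=c_1$ is constant this reads $\U_1\,\eta(\W)=\pm\,\eta([\U_1,\W])$. As $[\U_1,\W]$ again lies in $\vv$, iteration yields $\U_1^{\,n}\,\eta(\W)=\pm\,\eta(\ad(\U_1)^{n}\W)$ for all $n\geq0$. Since $\vv$ is nilpotent and $\U_1\in\vv$, the endomorphism $\ad(\U_1)|_{\vv}$ is nilpotent, so $\ad(\U_1)^{N}\W=0$ for some $N$, and therefore $\U_1^{\,N}\,\eta(\W)=0$.

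It then remains to conclude that $\eta(\W)$ is constant, knowing only that it is a smooth function on the finite-volume space $G/\G$ annihilated by $\U_1^{\,N}$. Since $\U_1\neq0$ is nilpotent, the one-parameter group $\{\exp t\U_1\}$ is closed and noncompact, so its flow on $G/\G$ is ergodic by Theorem~\ref{HoweMoore} and the irreducibility of $\G$, and it preserves Haar measure. Along almost every $\U_1$-orbit the function $t\mapsto\eta(\W)(\exp(t\U_1)\cdot g\G)$ is a polynomial of degree $<N$; a nonconstant polynomial is unbounded, which contradicts Poincar\'e recurrence together with the continuity of $\eta(\W)$. (Equivalently, $\eta(\W)\in L^2(G/\G)\subset L^1(G/\G)$ and a short induction on $N$ using $\int_{G/\G}\U_1 f\,dg_\G=0$ gives the same thing.) Hence $\U_1\,\eta(\W)=0$ everywhere, and ergodicity of the $\U_1$-flow forces $\eta(\W)$ to be constant. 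This proves the claim, and with it the theorem.

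The heart of the matter is the last paragraph. Since $\G$ is not assumed cocompact, the functions $\eta(\W)$ need not be bounded, so—unlike in the proof of Theorem~\ref{nilpotentsemisimple}, where cocompactness is used precisely to control the transfer function—one cannot conclude by boundedness and must instead lean on finiteness of Haar measure together with ergodicity of unipotent flows. In the product setting $G=G_1\times\cdots\times G_k$ of Theorem~\ref{nilpotentnilpotent} one should also check that the $\U_1$-flow is ergodic for irreducible $\G$ via Moore's theorem ($\U_1$, being a nonzero unipotent, has nontrivial noncompact projection to some simple factor); alternatively, $\U_1$ may be replaced throughout by any element of $U$ whose flow is known to be ergodic.
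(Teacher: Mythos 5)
Your proof is correct, but it takes a genuinely different route from the paper. The paper's argument is a purely algebraic two-step reduction: it restricts to $U$, obtains the smooth transfer function $P$ from Theorem~\ref{nilpotentnilpotent}, and then for $v$ in the center $V'$ of $V$ uses the cocycle identity on $\a(uvu^{-1},x)$ together with $uvu^{-1}=v$ to show that $\a(v,x)+P(vx)-P(x)$ is $U$-invariant, hence constant by ergodicity of the $U$-action; it then repeats the identical computation with $V'$ in the role of $U$ and $V$ in the role of $V'$ (legitimate because $V'$ centralizes $V$ and is itself ergodic by Howe-Moore). Your argument instead works infinitesimally with the corrected $1$-form $\eta=\o-dP$: closedness plus $\eta(\U_1)=c_1$ constant gives $\U_1\,\eta(\W)=\pm\,\eta([\U_1,\W])$, and the nilpotency of $\ad(\U_1)|_{\vv}$ forces $\U_1^{N}\eta(\W)=0$; you then kill the resulting polynomial along $\U_1$-orbits via Poincar\'e recurrence (or via $\int_{G/\G}\U_1 f=0$ for smooth vectors) and ergodicity of the $\U_1$-flow. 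Both approaches lean on the same two pillars---Theorem~\ref{nilpotentnilpotent} for $P$, and ergodicity of a unipotent flow---but the paper's version needs neither the closedness/commuting-derivatives bookkeeping nor the finite-measure/recurrence input, and in fact works with only measurable regularity of the leftover function, whereas yours exploits the already-established smoothness of $P$ and the $1$-form formalism of Section~\ref{cocycles} to phrase the reduction as a Lie-algebra nilpotency statement. Your version has the advantage of exposing the mechanism (each bracket with $\U_1$ buys one $\U_1$-derivative) and of requiring only one ergodic one-parameter subgroup rather than the subgroups $U$ and $V'$. One small caveat: your parenthetical justification for ergodicity of the $\U_1$-flow in the product case (``has nontrivial noncompact projection to \emph{some} simple factor'') does not match the hypothesis of the usual Moore theorem, which wants noncompact projection to \emph{each} factor; either invoke the Mautner-phenomenon form of the statement that the paper records in Section 2.2.2 for nilpotent elements and irreducible $\G$, or, as you note, simply run the argument with $\U_1+\U_2$ in place of $\U_1$, since $\eta(\U_1+\U_2)=c_1+c_2$ is constant and $\ad(\U_1+\U_2)|_{\vv}$ is still nilpotent.
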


As an easy application of \ref{maximalunipotent}, we obtain the following.

\begin{corollary}
Any smooth $\RR$-valued cocycle over the action by the upper triangular group $V \subset \SL(n,\RR)$ (with $n>3$) on $\SL(n,\RR)/\G$ is smoothly cohomologous to a constant cocycle.
\end{corollary}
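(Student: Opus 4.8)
The plan is to read this off from Theorem \ref{maximalunipotent} once $\SL(n,\RR)$ and its upper triangular unipotent subgroup $V$ have been placed in the setting of that theorem. To begin, $\SL(n,\RR)$ is a noncompact simple Lie group with finite center (its center is contained in $\{\pm I\}$), so it is a legitimate choice of $G$ in Theorems \ref{nilpotentnilpotent} and \ref{maximalunipotent}; and because $\SL(n,\RR)$ is simple, every lattice $\G \subset \SL(n,\RR)$ is automatically irreducible, so no extra hypothesis on $\G$ is needed.

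Next I would produce the embedding $\SL(2,\RR) \times \SL(2,\RR) \hookrightarrow \SL(n,\RR)$ required by Theorem \ref{nilpotentnilpotent} by sending $(A,B)$ to the block-diagonal matrix $\mathrm{diag}(A,B,I_{n-4})$. This is a genuine embedding---so in the language of Theorem \ref{nilpotentnilpotent} one has $k=l=1$ and no covers intervene---and it is exactly here that $n \geq 4$, i.e.\ $n > 3$, is used: two disjoint $2 \times 2$ blocks do not fit inside an $n \times n$ matrix when $n \leq 3$, and indeed $\Sl(2,\RR) \oplus \Sl(2,\RR)$ does not embed in $\Sl(3,\RR)$ at all. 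Under this embedding $\U_{1}$ and $\U_{2}$ become the commuting nilpotent matrices $E_{12}$ and $E_{34}$ in $\Sl(n,\RR)$, so the group $U$ they generate is the rank-$2$ abelian subgroup $\{I + rE_{12} + sE_{34} : r,s \in \RR\}$, and the projection of $\U_{1} + \U_{2} = E_{12} + E_{34}$ to the single simple factor $\g = \Sl(n,\RR)$ is nonzero. Thus all the hypotheses of Theorems \ref{nilpotentnilpotent} and \ref{maximalunipotent} hold.

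Finally I would observe that the upper triangular unipotent group $V \subset \SL(n,\RR)$ is a maximal unipotent subgroup (the maximal unipotent subgroups of $\SL(n,\RR)$ are precisely the conjugates of $V$), and that it contains $U$ since $E_{12}$ and $E_{34}$ are strictly upper triangular. Hence $V$ is a maximal unipotent subgroup containing $U$ in the sense of Theorem \ref{maximalunipotent}, and that theorem applied to $V$ yields exactly the assertion that every smooth $\RR$-valued cocycle over the $V$-action on $\SL(n,\RR)/\G$ is smoothly cohomologous to a constant cocycle. There is no real obstacle here beyond Theorem \ref{maximalunipotent} itself; the one point deserving a word of care is that $U$ typically lies in more than one maximal unipotent subgroup of $\SL(n,\RR)$, so one must note that Theorem \ref{maximalunipotent} applies to any maximal unipotent subgroup containing $U$---in particular to the standard $V$---and not only to some canonically singled-out one.
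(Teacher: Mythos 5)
Your proposal is correct and follows exactly the paper's one-line argument: embed $\SL(2,\RR)\times\SL(2,\RR)$ into the first two diagonal $2\times 2$ blocks of $\SL(n,\RR)$, identify $\U_1,\U_2$ with $E_{12},E_{34}$, note that the standard upper triangular unipotent group $V$ is a maximal unipotent subgroup containing $U$, and apply Theorem~\ref{maximalunipotent}. The extra remarks on $n>3$, irreducibility of $\G$, and the non-uniqueness of the maximal unipotent containing $U$ are all sound elaborations of what the paper leaves implicit.
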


\begin{proof}
The result follows by considering the embedding of $\SL(2,\RR) \times \SL(2,\RR)$ into the first two diagonal $2 \times 2$ blocks of $\SL(n,\RR)$, and applying Theorem \ref{maximalunipotent}.
\end{proof}

\section{Proof of Theorem \ref{nilpotentsemisimple}}
\subsection{Strategy}
As in the statement of Theorem \ref{nilpotentsemisimple}, we let $G = G_{1} \times \cdots \times G_{k}$ be a product of noncompact simple Lie groups with finite center, and $\G \subset G$ a cocompact irreducible lattice.  Suppose $\U \in \g$ is nilpotent and $\X \in \g$ is a semisimple element such that $\left[\U,\X \right]=0$ and such that there are stable and unstable vectors in each $\g_{i} := \mathrm{Lie}(G_{i})$.  Then the commuting flows $\phi_{t}^{\U}$ and $\phi_{t}^{\X}$ of $\U$ and $\X$ on $G/\G$ form a group action by $\RR^{2}$.  Suppose $\a$ is a smooth cocycle over this action.  Then its infinitesimal generator $\o$ is determined by the smooth functions
\[
f=\o(\U) \quad \textrm{and} \quad g=\o(\X),
\]
and these satisfy the relation $\U g = \X f$.  Now, finding a smooth solution to the cohomology equation for $\a$ is equivalent to finding a smooth function $P:G/\G \rightarrow \RR$ such that
\[
\U P=f \quad \textrm{and} \quad \X P=g.
\]

Our strategy is to choose a subalgebra $\h \subset \g$ containing $\U$ and $\X$, and consider its corresponding subgroup $H \subset G$.  We have the left-regular unitary representation of $H$ on $L^{2}(G/\G)$, so there is a direct integral decomposition
\[
L^{2}(G/\G) = \int_{\oplus}{\HH_{\nu}ds(\nu)},
\]
where $ds$-almost all $\HH_{\nu}$ are irreducible.  Naturally, the corresponding decomposition of $f \in L^{2}(G/\G)$ is denoted
\[
f = \int_{\oplus}{f_{\nu}ds(\nu)}, \quad f_{\nu} \in \HH_{\nu}.
\]
(This decomposition also holds for the Sobolev spaces, $W^{s}(L^{2}(G/\G))$.)  The idea will be to choose an $\h$ whose representations are well-enough understood that we can find solutions $P_{\nu}$ to the cohomology equation in each irreducible $\HH_{\nu}$.  This, together with estimates on the Sobolev norms of the $P_{\nu}$, will allow us to glue these solutions together to get a global solution $P \in L^{2}(G/\G)$.

Next, we must show that the solution $P$ is smooth on $G/\G$.  For this, we consider the stable and unstable submanifolds of $G/\G$ with respect to the flow $\phi_{t}^{\X}$ along the semisimple element $\X \in \g$.  A Livsic type argument will show that $P$ is smooth along these foliations.  (It is worth noting that this is the only place where the cocompactness of $\G \subset G$ is used.)  Then, since these directions span $\g$ as a Lie algebra, we can use Theorem \ref{KaSp} to show that $P$ is smooth on $G/\G$.  

The following sections are devoted to proving these claims.

\subsection{Restatement of the problem}
We have a noncompact semisimple Lie group $G$ with finite center, and $\G \subset G$ a cocompact irreducible lattice.  We have a nilpotent element $\U \in \g := \mathrm{Lie}(G)$ and a commuting semisimple element $\X \in \g$.  The flows of $\U$ and $\X$ on $G/\G$ can be seen as a group action by the subgroup $A \subset G$ corresponding to the Lie subalgebra generated by $\U$ and $\X$.  We are given a cocycle
\[
\a : A \times G/\G \rightarrow \RR
\]
that is smooth in the sense that it is a smooth map, and $\a(a,\_) \in C^{\infty}(L^{2}(G/\G))$ for all $a \in A$.  Defining the infinitesimal generator $\o$ of $\a$ as in Section \ref{cocycles}, one sees that $\a$ is completely determined by the functions $f = \o(\U), g = \o(\X) \in C^{\infty}(L^{2}(G/\G))$.  A simple calculation shows that the cocycle identity is now the relation $\U g = \X f$.  

From the cohomology equation one sees that if $\a$ is cohomologous to a constant cocycle $c: A \rightarrow \RR$, then $c$ is determined by
\[
c(a) = \int_{G/\G}\a(a,g)dg_{\G}.
\]
Thus, showing that all smooth cocycles are cohomologically constant is equivalent to showing that all smooth cocycles which integrate to $0$ are cohomologically trivial.  We will assume then that
\[
\int_{G/\G}\a(a,g)dg_{\G} = 0
\]
for all $a \in A$.

In terms of the infinitesimal generator $\o$, showing the cocycle is cohomologically trivial is equivalent to showing that $\o = dP$ for some function $P$.  We now recast Theorem \ref{nilpotentsemisimple} in terms of the functions $f$ and $g$ .

\begin{theoremA}
Suppose $G$ is a noncompact semisimple Lie group with finite center, and $\G \subset G$ is a cocompact irreducible lattice.  Suppose $\U \in \g$ is nilpotent and $\X \in \g$ is a semisimple element such that $[\U,\X]= 0$, and assume the Lie algebra of each factor of $G$ contains (un)stable vectors for the flow $\phi_{t}^{\X}$.  Suppose $f, g \in C^{\infty}(L^{2}(G/\G))$ satisfy $\U g = \X f$ and both $f$ and $g$ integrate to $0$.  Then there exists $P \in C^{\infty}(G/\G)$ such that $\U P = f$ and $\X P = g$.
\end{theoremA}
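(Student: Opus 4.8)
The plan is to carry out the strategy sketched above: first pass to a subgroup $H\subset G$ containing $\U$ and $\X$ whose unitary representation theory is completely understood; then solve the pair $\U P=f$, $\X P=g$ in each piece of a direct-integral decomposition of $\Lii(G/\G)$ under $H$, with Sobolev estimates that let the pieces be glued into a single $P\in\Lii(G/\G)$ smooth along the $H$-directions; and finally bootstrap $P$ to a genuinely smooth function on $G/\G$ by a Livsic-type argument along the hyperbolic foliations of $\phi_{t}^{\X}$, invoking Theorem \ref{KaSp}.

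To choose $H$: since $\X$ is semisimple its centralizer $\z=\{\V\in\g:[\X,\V]=0\}$ is reductive, and $\U$ is a nilpotent element of $\z$. Jacobson--Morozov applied inside $\z$ embeds $\U$ in an $\Sl(2,\RR)$-triple lying in $\z$, so $\U$ sits in a subalgebra $\mathfrak{s}\cong\Sl(2,\RR)$ with $\X$ centralizing all of $\mathfrak{s}$. Then $\h:=\mathfrak{s}\oplus\RR\X$ is a subalgebra isomorphic to $\Sl(2,\RR)\oplus\RR$ --- the sum is direct, since a nonzero semisimple element commuting with a nilpotent element inside $\Sl(2,\RR)$ must vanish --- and, $G$ having finite center, the corresponding subgroup is $H\cong\overline{\SL(2,\RR)}^{m}\times\RR$ for some finite $m$, with $\U$ a nonzero nilpotent of the $\overline{\SL(2,\RR)}^{m}$-factor and $\X$ generating the $\RR$. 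Restricting the left-regular representation to $H$ and using Theorem \ref{directintegral} gives $\Lii(G/\G)=\int_{\oplus}\HH_{\nu}ds(\nu)$ with almost every $\HH_{\nu}\cong\rho_{\nu}\otimes\chi_{\xi_{\nu}}$, $\rho_{\nu}$ irreducible for $\overline{\SL(2,\RR)}^{m}$ and $\chi_{\xi_{\nu}}\colon\X\mapsto i\xi_{\nu}$; the Sobolev spaces decompose likewise. Writing $f=\int_{\oplus}f_{\nu}ds(\nu)$ and $g=\int_{\oplus}g_{\nu}ds(\nu)$, the relation $\U g=\X f$ becomes $\U g_{\nu}=i\xi_{\nu}f_{\nu}$ in each $\HH_{\nu}$, while the hypothesis that $f$ and $g$ integrate to $0$ kills the trivial summand.

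Now I solve componentwise. Since $\exp(\RR\X)$ is a noncompact closed subgroup and $\X$ is not elliptic in any factor, ergodicity of $\phi_{t}^{\X}$ (cf. Theorem \ref{HoweMoore}, using that $\G$ is irreducible) rules out nonzero $\exp(\RR\X)$-invariant vectors orthogonal to the constants, so $\xi_{\nu}\neq0$ for almost every $\nu$ off the trivial summand. There I put $P_{\nu}:=(i\xi_{\nu})^{-1}g_{\nu}$, which satisfies $\X P_{\nu}=g_{\nu}$ and $\U P_{\nu}=(i\xi_{\nu})^{-1}\U g_{\nu}=f_{\nu}$; on the trivial summand $f_{\nu}=g_{\nu}=0$ and $P_{\nu}:=0$. (From the viewpoint of \cite{FF}, the obstructions to $\U P_{\nu}=f_{\nu}$ are the $\U$-invariant distributions in $\rho_{\nu}$, and they all annihilate $f_{\nu}=(i\xi_{\nu})^{-1}\U g_{\nu}$ because $g_{\nu}$ is a smooth vector; this is where the $\Sl(2,\RR)$-analysis of \cite{FF}, packaged as in \cite{M1,M2}, does its work.) The quantitative Flaminio--Forni estimates applied to $\U g_{\nu}=i\xi_{\nu}f_{\nu}$, together with $P_{\nu}=(i\xi_{\nu})^{-1}g_{\nu}$ and $\X^{p}P_{\nu}=\X^{p-1}g_{\nu}$, then bound every $\h$-directional Sobolev norm of $P_{\nu}$ by Sobolev norms of $f_{\nu}$ and $g_{\nu}$ with constants uniform in $\nu$ (here one uses the spectral gap of the $\Sl(2,\RR)$-Casimir on $\Lii(G/\G)$). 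As $f$ and $g$ are smooth vectors, $P:=\int_{\oplus}P_{\nu}ds(\nu)$ lies in $\Lii(G/\G)$, solves $\U P=f$ and $\X P=g$, and has all its $\h$-derivatives in $\Lii(G/\G)$. (As a consistency check on the second equation: once $P$ with $\U P=f$ is in hand, $Q:=\X P-g$ has $\U Q=\X\U P-\U g=\X f-\U g=0$, so $Q$ is invariant under the ergodic flow $\phi_{t}^{\U}$, hence constant, and $\int Q=0$ forces $Q=0$.)

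It remains to promote $P$ --- so far controlled only along $\h$ --- to an element of $\Cinf(G/\G)$. Since $\X$ is semisimple with not-all-imaginary eigenvalues in every factor, $\phi_{t}^{\X}$ is partially hyperbolic with nontrivial stable and unstable foliations. From $\X P=g$ with $g$ smooth, the Livsic-type argument --- writing, for $x$ and $y$ on one stable leaf, $P(x)-P(y)=-\int_{0}^{\infty}\bigl(g(\phi_{s}^{\X}x)-g(\phi_{s}^{\X}y)\bigr)ds$ and differentiating under the integral, the convergence coming from the exponential contraction and the smoothness of $g$ --- shows $P$ is smooth along the stable foliation, and symmetrically (via $\phi_{-t}^{\X}$) along the unstable one; this is the only step using cocompactness of $\G$. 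The stable and unstable directions, together with $\X$, generate $\g$ as a Lie algebra --- this is where the hypothesis that each $\g_{i}$ contains stable and unstable vectors enters --- so all the hypotheses of Theorem \ref{KaSp} are met and $P\in\Cinf(G/\G)$. The componentwise equation is, in the end, almost trivial thanks to the $\Sl(2,\RR)\oplus\RR$ structure; I expect the real work to be in the gluing step --- producing Sobolev estimates uniform enough in the spectral parameter to reassemble the $P_{\nu}$, which is exactly where the representation-theoretic machinery of \cite{FF} and \cite{M1,M2} must be genuinely imported --- and, to a lesser degree, in the concluding smoothness bootstrap, which demands some care with the a.e.-defined nature of $P$ along leaves but otherwise follows the template of \cite{KS2}.
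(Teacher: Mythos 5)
Your proposal is correct and follows essentially the same route as the paper: pass to $\h \cong \Sl(2,\RR)\oplus\RR\X$ via Jacobson--Morozov, decompose $L^{2}(G/\G)$ under the corresponding subgroup, solve componentwise with uniform Flaminio--Forni-type estimates made possible by the spectral gap from Theorem \ref{spectralgap}, recover $\X P=g$ from ergodicity of $\phi_t^{\U}$, and bootstrap smoothness along the stable/unstable foliations of $\phi_t^{\X}$ via a Livsic argument and Theorem \ref{KaSp}. The one stylistic divergence worth noting is in the componentwise step: the paper first proves the vanishing of the $\U$-invariant obstructions abstractly (Lemma \ref{flaminiolemma}) and then invokes Theorem \ref{FF} for the \emph{existence} of $P_{\mu}$, tensoring with $v_{\theta}$ afterward, whereas you exploit the one-dimensionality of the $\RR\X$-factor to write down the solution $P_{\nu}=(i\xi_{\nu})^{-1}g_{\nu}$ directly (using $\xi_{\nu}\neq0$, itself a consequence of ergodicity of $\phi_t^{\X}$), and then appeal to Flaminio--Forni only for the uniqueness-plus-estimate needed to glue --- since $\|g_\nu\|=|\xi_\nu|\|P_\nu\|$ is not a priori controlled, this appeal to the FF bound is genuinely necessary to tame the $(i\xi_\nu)^{-1}$ factor, and your proposal correctly recognizes this. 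Both are sound; your version makes the tautological nature of the componentwise solution more visible, while the paper's version isolates the obstruction-vanishing lemma in a form that is reusable without the one-dimensionality hypothesis.
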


In the next section we introduce a subalgebra $\h \subset \g$ containing $\U$ and $\X$ whose representations will be useful to our treatment of the problem.

\subsection{Defining a useful subalgebra $\h$} \label{definingh}

By the following generalization of the Ja\-cobson-Morozov Lemma \cite{Starkov}, we can find a subalgebra $\h_{1} \in \g$ such that $\h_{1} \cong \Sl(2,\RR)$, $\U =  \big( \begin{smallmatrix} 0 & 1 \\ 0 & 0 \end{smallmatrix}\big) \in \Sl(2,\RR)$, and $\left[\X,\Sl(2,\RR) \right]=0$.

\begin{theorem}[Jacobson-Morozov]\label{JM}
Let $\U$ be a nilpotent element in a semisimple Lie algebra  $\g$, commuting with a semisimple element $\X \in \g$.  Then there exist a semisimple element $\Y \in \g$ and a nilpotent element $\V \in \g$ such that $\left[\Y,\U \right] = \U$, $\left[\Y,\V \right] = -\V$, and $\left[\U,\V \right] = \Y$, where $\Y$ and $\V$ commute with $\X$.
\end{theorem}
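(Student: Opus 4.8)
The plan is to build the $\Sl(2,\RR)$-triple entirely inside the centralizer of $\X$ and then invoke the classical, non-equivariant Jacobson--Morozov lemma there. Let $\mathfrak{l} := \{Z \in \g : [Z,\X]=0\}$ be the centralizer of $\X$ in $\g$. The first observation is that $\mathfrak{l}$ is reductive in $\g$: since $\X$ is semisimple, $\ad(\X)$ is diagonalizable over $\CC$ and $\mathfrak{l}$ is its $0$-eigenspace, and since the Killing form pairs the $\l$-eigenspace of $\ad(\X)$ with the $(-\l)$-eigenspace, it restricts nondegenerately to $\mathfrak{l}$. Writing $\mathfrak{l} = \z(\mathfrak{l}) \oplus \mathfrak{l}'$ with $\mathfrak{l}' := [\mathfrak{l},\mathfrak{l}]$ semisimple, reductivity of the centralizer also gives that $\z(\mathfrak{l})$ consists of semisimple elements of $\g$.

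The second step is to verify that $\U$ is genuinely a nilpotent element of the semisimple algebra $\mathfrak{l}'$, not merely ad-nilpotent in $\mathfrak{l}$. Since $[\U,\X]=0$ we have $\U \in \mathfrak{l}$; write $\U = \U_{0} + \U'$ with $\U_{0} \in \z(\mathfrak{l})$ and $\U' \in \mathfrak{l}'$. Because $\U_{0}$ is central in $\mathfrak{l}$ it commutes with $\U'$ and satisfies $\ad(\U)|_{\mathfrak{l}} = \ad(\U')|_{\mathfrak{l}}$; restricting the nilpotent operator $\ad_{\g}(\U)$ to the invariant subspace $\mathfrak{l}$ then shows $\ad(\U')$ is nilpotent on $\mathfrak{l}$, so $\U'$ is nilpotent in $\mathfrak{l}'$ and hence in $\g$. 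Thus $\U = \U_{0} + \U'$ is a commuting sum of a semisimple element and a nilpotent element, so by uniqueness it is the Jordan decomposition of $\U$; as $\U$ is nilpotent, $\U_{0}=0$ and $\U = \U' \in \mathfrak{l}'$.

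The third step is to apply the classical Jacobson--Morozov lemma in $\mathfrak{l}'$. (If $\U = 0$ the statement holds trivially with $\Y = \V = 0$.) For $\U \neq 0$ this produces an $\Sl(2,\RR)$-triple $(\U, \mathsf{H}, \mathsf{F})$ inside $\mathfrak{l}'$ with $[\mathsf{H},\U] = 2\U$, $[\mathsf{H},\mathsf{F}] = -2\mathsf{F}$, $[\U,\mathsf{F}] = \mathsf{H}$. Putting $\Y := \tfrac{1}{2}\mathsf{H}$ and $\V := \tfrac{1}{2}\mathsf{F}$ gives exactly the relations $[\Y,\U] = \U$, $[\Y,\V] = -\V$, $[\U,\V] = \Y$, with $\Y$ semisimple (the neutral element of an $\Sl(2,\RR)$-triple is ad-semisimple) and $\V$ nilpotent. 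Finally, $\mathfrak{l}' = [\mathfrak{l},\mathfrak{l}] \subseteq \mathfrak{l}$, and every element of $\mathfrak{l}$ commutes with $\X$ by definition, so $\Y$ and $\V$ commute with $\X$, as required.

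I expect the main obstacle to be the second step --- ensuring that the $\g$-nilpotent element $\U$ lying in the reductive subalgebra $\mathfrak{l}$ is actually nilpotent in the derived subalgebra $\mathfrak{l}'$, rather than merely ad-nilpotent in $\mathfrak{l}$ (these notions coincide only for semisimple Lie algebras). Equivalently, one needs that $\z(\mathfrak{l})$ contains no nonzero nilpotent element, i.e. that the Jordan decomposition of $\g$ is inherited by the reductive-in-$\g$ subalgebra $\mathfrak{l}$. Once that structural fact is granted, the rest is the classical lemma plus a harmless rescaling.
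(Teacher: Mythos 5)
The paper does not prove this theorem: it is quoted from the cited reference (Starkov), so there is no internal proof to compare against. Judged on its own, your argument is correct and is the standard way to obtain the equivariant Jacobson--Morozov lemma: build the triple inside the centralizer $\mathfrak{l} = \z_{\g}(\X)$, which is reductive in $\g$ because the Killing form restricts nondegenerately to the $0$-eigenspace of $\ad(\X)$, and then apply the classical lemma in $\mathfrak{l}' = [\mathfrak{l},\mathfrak{l}]$.

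Two remarks. First, you actually invoke the same structural fact twice: the claim ``$\z(\mathfrak{l})$ consists of $\g$-semisimple elements'' and the step ``$\U'$ nilpotent in $\mathfrak{l}'$ hence nilpotent in $\g$'' (and, at the end, ``$\mathsf{H}$ semisimple in $\mathfrak{l}'$ hence in $\g$'') are all instances of the compatibility of the abstract Jordan decomposition with the restriction of the adjoint representation of $\g$ to the semisimple subalgebra $\mathfrak{l}'$; you correctly flag this as the crux, but it would be cleaner to state it once and use it uniformly. Second, there is a shortcut that avoids appealing to Jordan uniqueness to kill $\U_{0}$: since $\U$ is $\g$-nilpotent and commutes with every $Z \in \z(\mathfrak{l})$, the operator $\ad(\U)\ad(Z)$ is nilpotent, so $B(\U,Z)=0$; since $\z(\mathfrak{l}) \perp \mathfrak{l}'$ under $B$ and $B|_{\mathfrak{l}}$ is nondegenerate, $B|_{\z(\mathfrak{l})}$ is nondegenerate, forcing the $\z(\mathfrak{l})$-component $\U_{0}$ of $\U$ to vanish. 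This simultaneously establishes the needed fact that $\z(\mathfrak{l})$ contains no nonzero nilpotent elements of $\g$. Either way, your proof is sound.
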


Now we can consider the subalgebra $\h := \h_{1} \times \h_{2} = \Sl(2,\RR) \times \RR \X$.  The subgroup $H \subset G$ corresponding to $\h$ is a product, $H = H_{1} \times H_{2}$ where $H_{1} = \overline{\SL(2,\RR)}^{k}$ is a $k$-sheeted cover of $\SL(2,\RR)$, and $H_{2} = \RR_{+}$.  The advantage of this is that the unitary representations of $H$ are easy to work with.  Our ultimate goal is to find $P \in C^{\infty}(G/\G)$ such that $\U P = f$ and $\X P = g$.  The first step toward achieving this is to prove the following lemma and apply it to the left-regular representation of $H$ on $L^{2}(G/\G)$.

\begin{lemma} \label{globalsolution}
Suppose $\HH$ is a unitary representation of $H_{1} \times H_{2}$, and suppose there is a spectral gap for the Casimir operator from $H_{1}$.  If $f$, $g \in C^{\infty}(\HH)$ satisfy $\U g = \X f$, then there exists $P \in \HH$ satisfying $\U P = f$.
\end{lemma}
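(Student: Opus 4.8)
The plan is to reduce to the case of an irreducible unitary representation of $H_1 \times H_2$ via the direct integral decomposition (Theorem \ref{directintegral}), and then exploit the product structure $H = H_1 \times H_2$ with $H_1 \cong \overline{\SL(2,\RR)}^k$ and $H_2 = \RR_+ = \exp(\RR\X)$. Since $\X$ commutes with all of $\h_1 \cong \Sl(2,\RR)$, an irreducible unitary representation of $H$ is (by Mautner's phenomenon / Schur's lemma applied to the commuting $H_2$-action) of the form $\HH = \HH_1 \otimes \CC$, where $H_2$ acts by a unitary character $t \mapsto e^{i\lambda t}$ for some $\lambda \in \RR$, and $H_1$ acts irreducibly on $\HH_1$. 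Thus in an irreducible component $\X$ acts as the scalar $i\lambda$.

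First I would split into two cases according to whether $\lambda = 0$ or $\lambda \neq 0$. If $\lambda \neq 0$, then $\X$ is invertible on $\HH$, so from $\U g = \X f$ we may simply set $P := \X^{-1} g$; then $\X P = g$ and $\U P = \U \X^{-1} g = \X^{-1} \U g = \X^{-1}\X f = f$ (using $[\U,\X]=0$), and $P$ is a smooth vector since $\X^{-1}$ is bounded and commutes with the $H_1$-action, so it preserves $C^\infty(\HH)$. The real content is the case $\lambda = 0$, i.e. $\X$ acts as zero, so the hypothesis $\U g = \X f$ degenerates to $\U g = 0$, and we must solve $\U P = f$ in an irreducible unitary representation $\HH_1$ of $\overline{\SL(2,\RR)}^k$ where $\U = \big(\begin{smallmatrix} 0 & 1 \\ 0 & 0\end{smallmatrix}\big)$ is the standard nilpotent. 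Here I would invoke Theorem \ref{FF} (Flaminio–Forni): provided the Casimir operator of $H_1$ has a spectral gap on $\HH_1$, the cohomology equation $\U P = f$ is solvable with $P \in C^\infty(\HH_1)$ precisely when $D(f) = 0$ for every $\U$-invariant distribution $D$. So it remains to verify that $f$ kills all $\U$-invariant distributions. The key point is that when $\lambda = 0$ the relation $\U g = \X f$ gives no information, so one must instead use that $f$ itself arises as $\o(\U)$ from a genuine cocycle, together with the cocycle relation, to rule out the obstructions; concretely, for each $\U$-invariant distribution $D$ one uses $\X$-invariance of $D$ (coming from $[\U,\X]=0$, so that $\X$ permutes $\U$-invariant distributions, and on an irreducible piece acts by the scalar $0$) and the structure of the space of $\U$-invariant distributions for representations of $\SL(2,\RR)$ classified in \cite{FF} to show the pairing $D(f)$ vanishes.

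After handling each irreducible $\HH_\nu$, I would reassemble: the solutions $P_\nu$ (chosen canonically, e.g. orthogonal to the $\U$-invariant vectors, or via the explicit Green's operator of \cite{FF,M1}) depend measurably on $\nu$ and satisfy Sobolev estimates of the form $\|P_\nu\|_{\HH_\nu} \le C \|f_\nu\|_{W^s(\HH_\nu)}$ with $C$ and $s$ uniform over $\nu$ because of the uniform spectral gap; integrating, $P := \int_\oplus P_\nu \, ds(\nu)$ defines an element of $\HH$ with $\U P = f$. The main obstacle I expect is the $\lambda = 0$ case: showing that $f$ annihilates every $\U$-invariant distribution. This is where the cocycle hypothesis, rather than just the PDE $\U g = \X f$, must be used, and where one needs the precise description from \cite{FF} of the $\U$-invariant distributions (their supports/types in the principal, complementary, and discrete series) to see that the relevant obstructions all vanish for $f$ coming from a cocycle over the $\RR^2$-action; handling the covers $\overline{\SL(2,\RR)}^k$ rather than $\SL(2,\RR)$ itself requires checking that the Flaminio–Forni analysis (which is really about representations of the Lie algebra together with a discreteness condition on the spectrum of the compact generator) goes through unchanged on the cover.
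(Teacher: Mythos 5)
Your overall strategy agrees with the paper's: decompose into irreducibles via the direct integral, observe that an irreducible of $H_1 \times H_2$ is $\HH_\mu \otimes \HH_\theta$ with $\HH_\theta$ one-dimensional, solve in each piece, and reassemble using uniform Sobolev estimates. But the fork you take inside each irreducible leads to two genuine problems.

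First, in the case $\lambda \neq 0$ you set $P_\nu = \X^{-1} g_\nu$. This indeed solves $\U P_\nu = f_\nu$ in a single irreducible, but it destroys the reassembly step: the only bound it yields is $\|P_\nu\| = |\lambda|^{-1}\|g_\nu\|$, and nothing in the hypotheses (a spectral gap for the Casimir of $H_1$) prevents $|\lambda|$ from accumulating at $0$ across the direct integral. You later invoke "uniform Sobolev estimates from the Green's operator," but that contradicts having chosen $P_\nu = \X^{-1}g_\nu$; you cannot have it both ways. The paper's route — prove $D(f_\mu) = 0$ for every $D \in \mathcal{I}_\U(W^s(\HH_\mu))$ via Lemma \ref{flaminiolemma} (using exactly the injectivity of $\X$ on $\HH_\theta$, i.e.\ $\lambda \neq 0$), then apply Theorem \ref{FF} — is what produces a solution $P_{\mu,\theta}$ with $\|P_{\mu,\theta}\| \leq C_{\mu_0,1+\e,0}\|f_{\mu,\theta}\|_{1+\e}$, a constant depending only on the spectral gap $\mu_0$ and not on $\lambda$ or $\mu$. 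That uniformity is the whole point and it is what your $\X^{-1}$ shortcut discards.

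Second, you identify $\lambda = 0$ as "the real content" and then gesture at "using the cocycle structure" to kill the obstructions; this is not an argument, and in fact Lemma \ref{globalsolution} is genuinely inapplicable when $\X$ has a nontrivial kernel (take $\HH = \HH_\mu \otimes \CC$ with trivial $H_2$-action, $g = 0$, and $f$ not annihilated by $\mathcal{I}_\U$; then $\U g = \X f$ holds vacuously but $\U P = f$ has no solution). The paper handles this by making $\X$-injectivity an explicit hypothesis of Lemma \ref{flaminiolemma} and then noting that, in the intended application to the mean-zero subspace of $L^2(G/\G)$, ergodicity of $\phi_t^\X$ forces $\theta \neq 0$ $ds$-almost everywhere, so the $\lambda = 0$ components contribute nothing. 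Thus you have the emphasis inverted: the $\lambda = 0$ case is not where the content lies (it is excluded a.e.); the content is the $\lambda$-uniform bound in the $\lambda \neq 0$ case, and that is precisely what your plan lacks.
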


The next few sections will be devoted to proving Lemma \ref{globalsolution}.  The full proof is stated in Section \ref{globalsolutionsection}.  First, we will summarize some of the details of the representation theory of $\h$.

\subsection{Representations of $\h$}\label{representations}
The subgroup of $G$ corresponding to the subalgebra $\h \subset \g$ is $H=H_{1} \times H_{2}$, where
$\h_{i}$ is the Lie algebra $H_{i}$.  Irreducible unitary representations of $H$ are of the form $\HH_{\mu} \otimes \HH_{\theta}$, where $\HH_{\mu}$ is an irreducible unitary representation of $H_{1}$ and $\HH_{\theta}$ is an irreducible unitary representation of $H_{2}$.  The subscripts $\mu$ and $\theta$ are explained in the following subsections.

\subsubsection{Representations of $\h_{1} = \Sl(2,\RR)$}
We will be concerned with the irreducible unitarizable representations of $\Sl(2,\RR)$; that is, those representations that arise as the derivatives of irreducible unitary representations of some Lie group whose Lie algebra is $\Sl(2,\RR)$ (in our study, this Lie group is being denoted $H_{1}$).  In fact, all such representations can be realized from irreducible unitary representations of some finite cover of $\SL(2,\RR)$.  In turn, all of these are unitarily equivalent to irreducible representations of $\SL(2,\RR)$, itself \cite{HT}.

We fix the following generators for $\Sl(2,\RR)$.
\[
\X = \left( \begin{array}{cc}
1/2 & 0 \\
0 & -1/2
\end{array}\right), \quad
\Y = \left( \begin{array}{cc}
0 & -1/2 \\
-1/2 & 0
\end{array}\right), \quad
\Theta = \left( \begin{array}{cc}
0 & 1/2 \\
-1/2 & 0
\end{array}\right).
\]
Then we have the Laplacian operator defined by $\Delta = \X^{2} + \Y^{2} + \Theta^{2}$, and the Casimir operator defined by $\Box = \X^{2} + \Y^{2} - \Theta^{2}$.  The Casimir operator is in the center of the universal enveloping algebra of $\Sl(2,\RR)$, and so it acts as a multiplicative scalar in each irreducible representation.  The value of this scalar classifies the irreducible representations of $\SL(2,\RR)$, so we will denote by $\HH_{\mu}$ the representation where $\Box$ acts by $-\mu$.

Any unitary representation of $H_{1}$ decomposes as a direct integral of $\HH_{\mu}$'s.  If there exists a $\mu_{0}$ such that $0 < \mu_{0} < \mu$ for all $\HH_{\mu}$ appearing in this decomposition, we say that the unitary representation has a \emph{spectral gap} for the Casimir operator.  

\subsubsection{Representations of $\h_{2} = \RR \X$} \label{h2}
Since $H_{2}$ is abelian, any element $\ex(t \X)$ acts as the multiplicative scalar $e^{it \theta}$ in an irreducible unitary representation $\HH_{\theta}$, for some real $\theta$.  For our purposes, the most important feature of $\HH_{\theta}$ is that it is one dimensional.  As such, we can pick a smooth vector $v_{\theta} \in \HH_{\theta}$ of norm $1$ as a basis.

\subsection{Invariant distributions and vanishing of obstructions} \label{obstructions}
In general, given an irreducible unitary representation of a Lie algebra $\h$ on a Hilbert space $\HH$, one has obstructions to solving the cohomology equation coming from distributions that are invariant under the flow.  For example, given $\U \in \h$ and $w \in C^{\infty}(\HH)$, in order to solve the equation $\U v = w$, one must have that $D(w)=0$ for every $\U$-invariant distribution $D$.  The set of $\U$-invariant distributions on a representation $\HH$ is denoted $\mathcal{I}_{\U}(\HH)$.  

Now, we recall our situation, where $\h = \h_{1} \times \h_{2}$, $\h_{1}=\Sl(2,\RR)$, and $\U$,$\X$ are in $\h_{1}$,$\h_{2}$, respectively.  (See Section \ref{definingh}.)  Consider an irreducible representation $\HH_{\mu,\theta}=\HH_{\mu} \otimes \HH_{\theta}$, and a cocycle given by $f_{\mu,\theta}$,$g_{\mu,\theta} \in C^{\infty}(\HH_{\mu,\theta})$ satisfying $\U g_{\mu,\theta} = \X f_{\mu,\theta}$.  We write $f_{\mu,\theta} = f_{\mu} \otimes v_{\theta}$ and $g_{\mu,\theta} = g_{\mu} \otimes v_{\theta}$ for some $f_{\mu}$ and $g_{\mu}$ in $\HH_{\mu}$, recalling that $v_{\theta} \in \HH_{\theta}$ is the norm $1$ basis discussed in Section \ref{h2}

The goal of this section is to show that the obstructions coming from the first factor vanish.  More precisely, we show that if $D$ is a $\U$-invariant distribution on $W^{s}(\HH_{\mu})$, the Sobolev space of order $s \geq 0$, then $D(f_{\mu})=0$.  (It will turn out that this is enough to write down a solution to the cohomology equation in $\HH_{\mu} \otimes \HH_{\theta}$.)

The following lemma was communicated to us by L. Flaminio in a more general form than the one in which we present it; we give a statement and proof that applies specifically to our setup.  Keeping the same notation as above, $\U \in \h_{1}$, $\X \in \h_{2}$, and $\HH_{\mu} \otimes \HH_{\theta}$ is an irreducible representation of $H$.  

\begin{lemma}[Flaminio]\label{flaminiolemma}
Let $D \in \II_{\U}(W^{s}(\HH_{\mu}))$, where $s \geq 0$.  Define $\bar{D}: W^{s}(\HH_{\mu}) \otimes \HH_{\theta} \rightarrow \HH_{\theta}$ by
\[
\bar{D} = D \otimes 1.
\]
That is, for all $u \in W^{s}(\HH_{\mu})$ and $v \in \HH_{\theta}$, $\bar{D}(u \otimes v) = D(u)v$.  Suppose that $f,g \in C^{\infty}(\HH_{\mu} \otimes \HH_{\theta})$ satisfy $\U g = \X f$.  Furthermore, suppose that the equation $\X w = 0$ implies that $w = 0$.  Then $\bar{D}(f) = 0$.
\end{lemma}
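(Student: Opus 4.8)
The plan is to push everything onto the one-dimensional factor $\HH_{\theta}$, where $\X$ acts simply as a scalar, and then recover $f$ from $\U g$ by inverting that scalar. Since $\HH_{\theta} = \CC v_{\theta}$ is one-dimensional and $\h_{1}$, $\h_{2}$ commute, on $\HH_{\mu} \otimes \HH_{\theta}$ the element $\U \in \h_{1}$ acts only on the first factor, while $\X \in \h_{2}$ acts as multiplication by $i\theta$, where $\theta \in \RR$ is the parameter of $\HH_{\theta}$ (so $\ex(t\X)$ acts by $e^{it\theta}$). Writing $f = f_{\mu} \otimes v_{\theta}$ and $g = g_{\mu} \otimes v_{\theta}$ with $f_{\mu}, g_{\mu} \in C^{\infty}(\HH_{\mu})$ --- smoothness of $f$ and $g$ being equivalent to that of $f_{\mu}$ and $g_{\mu}$, as the $\HH_{\theta}$-action is by a character --- the relation $\U g = \X f$ becomes the identity $\U g_{\mu} = i\theta f_{\mu}$ in $\HH_{\mu}$.

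First I would observe that the hypothesis ``$\X w = 0$ implies $w = 0$'' forces $\theta \neq 0$, since otherwise $\X$ acts as $0$ on the nonzero space $\HH_{\mu} \otimes \HH_{\theta}$. With $\theta \neq 0$ one can then solve $f_{\mu} = (i\theta)^{-1}\, \U g_{\mu}$. Because $g$ is a smooth vector, so is $g_{\mu}$, hence $g_{\mu} \in W^{s+1}(\HH_{\mu})$; and $D \in \II_{\U}(W^{s}(\HH_{\mu}))$ means precisely that $D$ annihilates $\U h$ for every $h \in W^{s+1}(\HH_{\mu})$ --- this is obtained by differentiating the invariance $D \circ \pi_{\mu}(\ex(t\U)) = D$ at $t = 0$. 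Applying it to $h = g_{\mu}$ gives $D(\U g_{\mu}) = 0$, hence $D(f_{\mu}) = 0$, and so
\[
\bar{D}(f) = \bar{D}(f_{\mu} \otimes v_{\theta}) = D(f_{\mu})\, v_{\theta} = 0 .
\]

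There is no real obstacle; the argument is short and formal. The only points requiring a little care are the bookkeeping for how $\U$ and $\X$ act on $\HH_{\mu} \otimes \HH_{\theta}$, and checking that the smoothness hypothesis on $g$ genuinely places $g_{\mu}$ in the domain on which the $\U$-invariance of $D$ can be used. All of the substance is in the hypothesis $\X w = 0 \Rightarrow w = 0$, i.e.\ $\theta \neq 0$, which is exactly what allows $f$ to be recovered from $\U g$; beyond the one-dimensionality of $\HH_{\theta}$ no spectral-gap or further representation-theoretic input enters.
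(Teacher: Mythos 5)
Your argument is correct, and it reaches the same conclusion by a closely related but not identical route. The paper's proof stays at the level of the operator $\bar{D}$ on the full tensor product: it first notes $\bar{D}\circ\U = 0$ directly from $\U$-invariance of $D$, then observes that $\bar{D}$ intertwines the $\X$-action (the commuting square $\bar{D}\circ(1\otimes\pi_{\theta}(\X)) = \pi_{\theta}(\X)\circ\bar{D}$), so that $\X\bar{D}(f) = \bar{D}(\X f) = \bar{D}(\U g) = 0$, and finishes by invoking $\X w = 0 \Rightarrow w = 0$ on $\HH_{\theta}$. You instead use the one-dimensionality of $\HH_{\theta}$ up front: you write $f = f_{\mu}\otimes v_{\theta}$, $g = g_{\mu}\otimes v_{\theta}$, replace $\X$ by the scalar $i\theta$ (identifying the hypothesis $\X w = 0 \Rightarrow w = 0$ with $\theta \neq 0$), solve $f_{\mu} = (i\theta)^{-1}\U g_{\mu}$, and apply $D$. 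Both proofs use exactly the same three inputs --- $\U$-invariance of $D$, the relation $\U g = \X f$, and injectivity of $\X$ on $\HH_{\theta}$ --- but the paper's version is slightly more structural (it would also apply if $\HH_{\theta}$ were replaced by any representation on which the relevant $\X$-image is injective), while yours is more explicit and makes the domain bookkeeping for $D$ (needing $g_{\mu}\in W^{s+1}(\HH_{\mu})$) visibly clean. There is no gap; your identification of the hypothesis with $\theta\neq 0$ and the use of the invariance $D\circ\pi_{\mu}(\ex(t\U)) = D$ differentiated at $t=0$ are exactly what is needed.
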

\begin{proof}
By the $\U$-invariance of $D$, we see that for any $u \otimes v \in W^{s}(\HH_{\mu}) \otimes \HH_{\theta}$, 
\[
\bar{D}(\U (u \otimes v)) = \bar{D}((\U u) \otimes v) = D(\U u)v = 0.
\]
Therefore, we have $\bar{D}(\U w) = 0$ for all $w \in W^{s}(\HH_{\mu}) \otimes \HH_{\theta}$.  Now, the diagram
\begin{displaymath}
\xymatrix{
  W^{s}(\HH_{\mu}) \otimes \HH_{\theta} \ar[r]^{\qquad \bar{D}} \ar[d]_{1\otimes\pi_{\theta}} & 
  																													\HH_{\theta} \ar[d]^{\pi_{\theta}} \\
  W^{s}(\HH_{\mu}) \otimes \HH_{\theta} \ar[r]^{\qquad \bar{D}} & \HH_{\theta} }  
\end{displaymath}
commutes, where $\pi_{\theta}$ denotes the representation of $\h_{2}$ on $\HH_{\theta}$, and the vertical arrows correspond to the map obtained by choosing some element of $\h_{2}$.  So, we have that 
\[
\X \bar{D}(f) = \bar{D}(\X f) = \bar{D}(\U g) = 0.
\]
By the last assumption in the Lemma, this implies that $\bar{D}(f) = 0$.
\end{proof}

In our situation we indeed have that the equation $\X w = 0$ implies $w=0$.  (This follows from ergodicity of the flow of $\X$ on $G/\G$.)  Therefore, we can apply Lemma \ref{flaminiolemma} to see that $\bar{D}(f_{\mu,\theta})=0$ for any $D \in \II_{\U}(W^{s}(\HH_{\mu}))$.  But,
\begin{eqnarray}
 \bar{D}(f_{\mu,\theta}) & = & \bar{D}(f_{\mu} \otimes v_{\theta}) \nonumber \\
 & = & D(f_{\mu})v_{\theta} \nonumber \\
 & = & 0, \nonumber
\end{eqnarray}
therefore,
\[
D(f_{\mu}) = 0.
\]
In the next section, we will use this to write down a solution $P_{\mu,\theta} \in \HH_{\mu} \otimes \HH_{\theta}$.

\subsection{Solutions in irreducible representations of $\h$}
In \cite{FF}, Flaminio and Forni proved the following theorem which shows that, for horocycle flows on quotients of $\PSL(2,\RR)$, the $\U$-invariant distributions are the only obstructions to solving the cohomology equation for a given function $f \in W^{s}(\HH_{\mu})$.  Furthermore, the solution comes with a fixed loss of regularity.  (A calculation on the second principal series representations shows that the result also holds for $\SL(2,\RR)/\G$.)  We will apply this to $f_{\mu}$ to get a solution on the first factor, and then use this to write a solution in $\HH_{\mu} \otimes \HH_{\theta}$.

\begin{theorem}[Flaminio-Forni]\label{FF}
Let $s>1$.  If $\mu > \mu_{0} > 0$, then there exists a constant $C_{\mu_{0},s,t}$ such that for all $f \in W^{s}(\HH_{\mu})$,
\begin{itemize}
  \item if $t < -1$, or
  \item if $t < s-1$ and $D(f)=0$ for all $D \in \mathcal{I}_{\U}(W^{s}(\HH_{\mu}))$,
\end{itemize}
then the equation $\U P = f$ has a solution $P \in W^{t}(\HH_{\mu})$, which satisfies the Sobolev estimate $\left\|P \right\|_{t} \leq C_{\mu_{0},s,t} \left\|f \right\|_{s}$.  Solutions are unique modulo the trivial subrepresentation if $t > 0$.
\end{theorem}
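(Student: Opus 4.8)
The plan is to solve $\U P = f$ inside an explicit realization of the irreducible representation $\HH_{\mu}$ in which $\U$ acts as a simple differential operator, so that the cohomological equation becomes a first-order ordinary differential equation one can integrate by hand. Recall that every irreducible unitarizable representation of $\Sl(2,\RR)$ (or of a finite cover) is a principal series ($\mu \geq 1/4$), a complementary series ($0 < \mu < 1/4$), or a discrete or limit-of-discrete series, and that the hypothesis $\mu > \mu_{0} > 0$ is precisely what keeps the complementary-series parameter bounded away from the value $\mu = 0$ of the trivial representation; this is what will make the constant $C_{\mu_{0},s,t}$ uniform. First I would fix, for the principal and complementary series, a line model realizing $\HH_{\mu}$ as a space of functions on $\RR$ carrying a $\mu$-dependent weight, in which $\U$ acts as the derivative $d/dx$, $\X$ acts by (infinitesimal) dilation, and $\Theta$---hence the Casimir---is the second-order operator determining the weight; in such a model $W^{s}(\HH_{\mu})$ becomes a concrete weighted Sobolev space on the line, and its smooth vectors are functions with prescribed asymptotic expansions at $\pm\infty$. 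For the discrete series I would instead use a weighted holomorphic (disk) model, in which $\U$ acts as a holomorphic vector field with a single fixed point on the bounding circle.

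With a model in hand, solving $\U P = f$ amounts to producing an antiderivative---in the line model, $P(x) = \int_{\ast}^{x} f(y)\,dy$---the only ambiguity being the additive constant; the two natural normalizations integrate from $-\infty$ and from $+\infty$ respectively, and they differ by the (regularized) total integral of $f$. The next step is to identify $\mathcal{I}_{\U}(W^{s}(\HH_{\mu}))$ explicitly. A $\U$-invariant distribution satisfies $D(dg/dx)=0$ for every smooth vector $g$, hence is a constant distribution; but because the smooth vectors carry prescribed asymptotics, the space of constants that extend continuously to $W^{s}(\HH_{\mu})$ for $s > 1$---obtained by regularizing the divergent integral $\int f\,dx$, say by subtracting the leading asymptotic terms at $\pm\infty$ and adding back their analytically continued integrals---is finite-dimensional, in fact two-dimensional for the principal and complementary series (with the analogous count for the discrete series coming from the jet of $f$ at the boundary fixed point of the disk model). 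Checking that these regularized functionals are bounded on $W^{s}$ for $s > 1$ is a one-variable, trace-type estimate.

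The crux is the Sobolev estimate on the antiderivative. If one asks only for $t < -1$, the weight defining $W^{t}(\HH_{\mu})$ absorbs the worst-case growth of $\int f$, so $\|P\|_{t} \leq C_{\mu_{0},s,t}\|f\|_{s}$ holds for either normalization; this is the unconditional clause. If instead all obstructions vanish, then the (regularized) total integral of $f$ is zero, so the two antiderivatives coincide, and the common antiderivative inherits enough decay at both $\pm\infty$ to lie in $W^{t}(\HH_{\mu})$ for every $t < s - 1$, with the same bound; the one full derivative that is lost is the antidifferentiation, and the strict inequality $t < s-1$ supplies the slack needed to close the weighted estimate. The dependence of the constant on $\mu$ is made uniform over $\mu > \mu_{0}$ by the spectral gap, which keeps the weight exponents away from their degenerate values at $\mu = 0$. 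Uniqueness modulo the trivial subrepresentation for $t > 0$ is then the observation that two solutions differ by a $\U$-invariant vector of $W^{t}(\HH_{\mu})$---a constant function in the line model---and for $\mu > 0$ no nonzero constant lies in $W^{t}(\HH_{\mu})$ when $t > 0$.

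The step I expect to be the main obstacle is exactly this analysis: writing down serviceable models for every family of unitarizable representations (including those of the relevant finite covers and of the exceptional parameters), computing $\mathcal{I}_{\U}$ explicitly in terms of the appropriate special functions, and---hardest of all---extracting the Sobolev bounds with a constant that remains uniform as $\mu \downarrow \mu_{0}$ (and as $\mu \to \infty$). It is here that the spectral-gap hypothesis is indispensable: without it both the regularizing functionals and the weighted estimates degenerate as $\mu \to 0$.
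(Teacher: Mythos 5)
The paper does not prove Theorem~\ref{FF}; it is quoted verbatim (with attribution) from Flaminio and Forni's paper \cite{FF}, and Ram\'\i rez only remarks that a check on the second principal series extends the result from $\PSL(2,\RR)$ to $\SL(2,\RR)$. So there is no internal proof for your attempt to be compared against. That said, your sketch is a faithful outline of the strategy Flaminio and Forni actually use: realize each irreducible unitary representation of $\SL(2,\RR)$ in a model where $\U$ is a first-order differential operator, solve $\U P = f$ by (regularized) integration, identify $\mathcal{I}_{\U}(W^{s}(\HH_{\mu}))$ with the finitely many regularized ``constant'' functionals, and close the argument with weighted one-variable Sobolev estimates whose constants are controlled uniformly by the spectral gap. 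The remark about the lost derivative and the slack $t<s-1$, and the identification of the $t>0$ uniqueness statement with the absence of $\U$-invariant vectors in a nontrivial irreducible, are all in line with \cite{FF}.

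One inaccuracy worth flagging: with the normalization in this paper (the Casimir acts by $-\mu$ on $\HH_{\mu}$), the hypothesis $\mu > \mu_{0} > 0$ restricts attention to the principal ($\mu \ge 1/4$) and complementary ($0 < \mu < 1/4$) series; the discrete and mock discrete series sit at $\mu \le 0$ and are excluded from the statement as quoted. Your discussion of the holomorphic disk model is therefore not needed for this theorem, although it is part of the analysis in \cite{FF} for their full result. Also, making the estimate $\|P\|_{t}\le C_{\mu_{0},s,t}\|f\|_{s}$ truly uniform requires care not only as $\mu\downarrow\mu_{0}$ but also as $\mu\to\infty$ along the principal series; Flaminio and Forni handle this via explicit Fourier-coefficient recursions in the circle model rather than by soft compactness arguments, and a complete write-up would need to reproduce estimates of that kind.
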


In Section \ref{obstructions}, we saw that for any $D \in \mathcal{I}_{\U}(W^{s}(\HH_{\mu}))$, $D(f_{\mu})=0$.  Therefore, we can apply Theorem \ref{FF} to obtain a solution $P_{\mu} \in \HH_{\mu}$ to the equation $\U P_{\mu}=f_{\mu}$, satisfying the estimate $\left\|P_{\mu} \right\| \leq C_{\mu_{0},1+\e,0} \left\|f _{\mu}\right\|_{1+\e}$, where $0 < \mu_{0} < \mu$ and $\e > 0$.  Set $P_{\mu,\theta} = P_{\mu} \otimes v_{\theta}$.

\begin{lemma} \label{irredsolution}
$P_{\mu,\theta}$ as defined above is a solution to $\U P_{\mu,\theta} = f_{\mu,\theta}$ in the irreducible representation $\HH_{\mu} \otimes \HH_{\theta}$, and it satisfies the estimate
\[
\left\|P_{\mu,\theta}\right\| \leq C_{\mu_{0},1+\e,0} \left\|f_{\mu,\theta} \right\|_{1+\e} 
\]
for any $\e > 0$.
\end{lemma}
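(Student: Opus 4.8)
The plan is to exploit the tensor‑product structure of the irreducible representation $\HH_{\mu} \otimes \HH_{\theta}$ together with the one‑dimensionality of $\HH_{\theta}$ established in Section~\ref{h2}. Since $\U$ lies in $\h_{1}$, it acts on $\HH_{\mu} \otimes \HH_{\theta}$ as $\U \otimes 1$, so that $\U(P_{\mu} \otimes v_{\theta}) = (\U P_{\mu}) \otimes v_{\theta} = f_{\mu} \otimes v_{\theta} = f_{\mu,\theta}$, using the defining property $\U P_{\mu} = f_{\mu}$ of the solution $P_{\mu}$ produced by Theorem~\ref{FF}. This immediately gives the first assertion.

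For the norm estimate, I would first note that since $v_{\theta}$ has norm $1$, we have $\left\|P_{\mu,\theta}\right\| = \left\|P_{\mu} \otimes v_{\theta}\right\| = \left\|P_{\mu}\right\|$. By the Sobolev estimate in Theorem~\ref{FF}, applied with $s = 1+\e$ and $t = 0$ (legitimate because $f \in C^{\infty}$ so $f_{\mu} \in W^{1+\e}(\HH_{\mu})$), one has $\left\|P_{\mu}\right\| \leq C_{\mu_{0},1+\e,0}\left\|f_{\mu}\right\|_{1+\e}$, where the Sobolev norm on the right is the one coming from the Laplacian $\Delta$ of $\h_{1} = \Sl(2,\RR)$. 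It then remains only to compare $\left\|f_{\mu}\right\|_{1+\e}$ with $\left\|f_{\mu,\theta}\right\|_{1+\e}$. The key observation is that the Laplacian relevant to $\HH_{\mu} \otimes \HH_{\theta}$ acts on the second factor through the scalar attached to the one‑dimensional representation $\HH_{\theta}$: if the Sobolev norm on the tensor product is taken with respect to $\Delta \otimes 1$ then $(I - \Delta \otimes 1)^{1+\e}(f_{\mu} \otimes v_{\theta}) = \big((I-\Delta)^{1+\e} f_{\mu}\big)\otimes v_{\theta}$, whence $\left\|f_{\mu,\theta}\right\|_{1+\e} = \left\|f_{\mu}\right\|_{1+\e}$; and if instead one uses the full Laplacian of $\h = \h_{1} \oplus \h_{2}$, the operator $\X^{2}$ acts on $v_{\theta}$ by the nonpositive scalar $-\theta^{2}$, so $(I - \Delta \otimes 1 - 1 \otimes \X^{2})^{1+\e}(f_{\mu} \otimes v_{\theta}) = \big((I + \theta^{2} - \Delta)^{1+\e} f_{\mu}\big)\otimes v_{\theta}$ and one gets $\left\|f_{\mu,\theta}\right\|_{1+\e} \geq \left\|f_{\mu}\right\|_{1+\e}$. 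Either way $\left\|f_{\mu}\right\|_{1+\e} \leq \left\|f_{\mu,\theta}\right\|_{1+\e}$, and chaining the two inequalities yields the claimed bound.

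As for difficulty, there is no genuine obstacle here; the lemma is essentially bookkeeping that packages the work already done in Section~\ref{obstructions} and in Theorem~\ref{FF}. The only point requiring a moment's care is the behavior of the Laplacian, and hence of the Sobolev norm, on the tensor factor $\HH_{\theta}$; once one records that $\HH_{\theta}$ is one‑dimensional so that $\X$, and therefore $\X^{2}$ and the Laplacian, act there as scalars, the comparison of Sobolev norms is immediate and the estimate follows directly from Theorem~\ref{FF}.
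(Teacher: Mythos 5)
Your proof is correct and follows essentially the same route as the paper's: the computation $\U P_{\mu,\theta}=(\U P_{\mu})\otimes v_{\theta}=f_{\mu,\theta}$, the identity $\|P_{\mu,\theta}\|=\|P_{\mu}\|$, the estimate from Theorem~\ref{FF}, and the comparison $\|f_{\mu}\|_{1+\e}\le\|f_{\mu,\theta}\|_{1+\e}$. The only difference is that you spell out why the last inequality holds under either convention for the Sobolev norm on the tensor product, which the paper asserts without elaboration; your observation that $\X^{2}$ acts on $v_{\theta}$ by $-\theta^{2}\le 0$ is exactly the right point.
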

\begin{proof}
The first assertion follows from 
\begin{eqnarray}
\U P_{\mu,\theta} & = & \U (P_{\mu} \otimes v_{\theta}) \nonumber \\
 & = & (\U P_{\mu}) \otimes v_{\theta} \nonumber \\
 & = & f_{\mu} \otimes v_{\theta} \nonumber \\
 & = & f_{\mu,\theta}. \nonumber
\end{eqnarray}
The norm estimate comes from combining
\[
\left\|P_{\mu}\right\| = \left\|P_{\mu,\theta} \right\| 
\]
and
\[
\left\|f _{\mu}\right\|_{1+\e} \leq \left\|f_{\mu,\theta} \right\|_{1+\e}
\]
with the estimate on $P_{\mu}$ obtained from applying Theorem \ref{FF}.
\end{proof}

\subsection{Global solution} \label{globalsolutionsection}

We are now prepared to build a global solution $P$ in any unitary representation $\HH$ of $H_{1} \times H_{2}$ that has a spectral gap for the Casimir operator from $H_{1}$.  That is, we can prove Lemma \ref{globalsolution}.

\begin{proof}[Proof of Lemma \ref{globalsolution}]
We have the decomposition
\[
\HH = \int_{\oplus}{\HH_{\mu} \otimes \HH_{\theta}ds(\mu,\theta)}
\]
where each irreducible $\HH_{\mu} \otimes \HH_{\theta}$ appears with some multiplicity $m(\mu,\theta)$.  We then decompose $f$ and $g$ as
\[
f = \int_{\oplus}{f_{\mu,\theta}ds(\mu,\theta)}, \quad f_{\mu,\theta} \in \HH_{\mu} \otimes \HH_{\theta}
\]
and
\[
g = \int_{\oplus}{g_{\mu,\theta}ds(\mu,\theta)}, \quad g_{\mu,\theta} \in \HH_{\mu} \otimes \HH_{\theta}.
\]
From Lemma \ref{irredsolution}, we have solutions $P_{\mu,\theta}$ in each irreducible representation $\HH_{\mu} \otimes \HH_{\theta}$.  Set
\[
P = \int_{\oplus}{P_{\mu,\theta}ds(\mu,\theta)}.
\]
Then it is clear that $\U P = f$, formally.

To see that $P \in \HH$, we use the estimates on the norms of the $P_{\mu, \theta}$.  We have
\begin{eqnarray}
\left\| P \right\|^{2} & = & \int_{\oplus}{\left\| P_{\mu,\theta} \right\|^{2} ds(\mu,\theta)} \nonumber \\
 & \leq & \int_{\oplus}{C_{\mu_{0},1+\e,0} \left\| f_{\mu,\theta} \right\|_{1+\e}^{2} ds(\mu,\theta)} \nonumber \\
 & = & C_{\mu_{0},1+\e,0} \left\| f \right\|_{1+\e}^{2} \nonumber
\end{eqnarray}
where $0 < \mu_{0} < \mu$ for all $\mu$ that appear in the decomposition of $\HH$, and $\e > 0$.  This proves that $P \in \HH$.
\end{proof}

The restriction of $L^{2}(G/\G)$ to $H_{1}$ has a spectral gap.  This follows from work of D. Kleinbock and G. Margulis in \cite{KM} which, when combined with a theorem of Y. Shalom in \cite{Shalom}, yields the following theorem, quoted from \cite{M1}.

\begin{theorem} \label{spectralgap}
Let $G = G_{1} \times \cdots \times G_{k}$ be a product of noncompact simple Lie groups, $\G \subset G$ an irreducible lattice, and $H \subset G$ a non-amenable closed subgroup.  Then the restriction of $L^{2}(G/\G)$ to $H$ has a spectral gap.
\end{theorem}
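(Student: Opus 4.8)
The plan is to reconstruct the combination of known results referenced in the statement, splitting the argument into a hard analytic input about the $G$-representation and a soft representation-theoretic descent to $H$. Throughout, ``spectral gap'' for a unitary representation means that it does not weakly contain the trivial representation (equivalently, has no almost invariant unit vectors), and the relevant space is $L^{2}_{0}(G/\G)$, the orthogonal complement of the constant functions. \emph{Step 1} is to establish that $L^{2}_{0}(G/\G)$, as a representation of $G$, has a spectral gap in the strong, quantitative sense that a dense subspace of its matrix coefficients lies in $L^{p}(G)$ for some finite $p=p(G,\G)$; equivalently, that $(L^{2}_{0}(G/\G))^{\otimes n}$ is weakly contained in the regular representation $\lambda_{G}$ for some integer $n$. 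If every simple factor $G_{i}$ has Kazhdan's property (T) --- real rank at least $2$, or locally isomorphic to $\mathrm{Sp}(m,1)$ or $F_{4}^{-20}$ --- this is classical: $G$ then has property (T), so the gap is automatic, and by Cowling's uniform estimates the matrix coefficients of any $G$-representation without invariant vectors decay in some fixed $L^{p}$. The substantive case is when a rank-one factor locally isomorphic to $\mathrm{SO}(m,1)$ or $\mathrm{SU}(m,1)$ appears. If $G$ is itself simple of rank one, the spectrum of the Laplacian on $L^{2}_{0}(G/\G)$ is bounded away from $0$ for \emph{every} lattice $\G$ (the tempered/continuous part is bounded below by a positive constant and only finitely many exceptional eigenvalues lie beneath it), giving both the gap and an $L^{p}$ bound; and if $G$ is a genuine product with $\G$ irreducible, the qualitative gap is Shalom's theorem from \cite{Shalom} --- density of the projections of $\G$ to the factors forces a gap even though $G$ may lack property (T) --- while its upgrade to an honest $L^{p}$ bound on matrix coefficients is exactly the effective decay estimate of Kleinbock and Margulis in \cite{KM}.

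\emph{Step 2} is the descent to $H$. Write $\pi=L^{2}_{0}(G/\G)$ and fix $n$ with $\pi^{\otimes n}\prec\lambda_{G}$ from Step 1. Weak containment is preserved under restriction, so $(\pi|_{H})^{\otimes n}=(\pi^{\otimes n})|_{H}\prec\lambda_{G}|_{H}$. By the standard computation with a Borel section of $G\to H\backslash G$, the restriction $\lambda_{G}|_{H}$ is a countably infinite multiple of the regular representation $\lambda_{H}$ of $H$, so $\lambda_{G}|_{H}$ weakly contains $1_{H}$ if and only if $H$ is amenable; since $H$ is non-amenable, $1_{H}\not\prec\lambda_{G}|_{H}$, hence $1_{H}\not\prec(\pi|_{H})^{\otimes n}$. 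Finally, if $\pi|_{H}$ had almost invariant unit vectors $v_{j}$, the unit vectors $v_{j}^{\otimes n}$ would be almost invariant for $(\pi|_{H})^{\otimes n}$, by the telescoping bound $\|\sigma^{\otimes n}(h)v^{\otimes n}-v^{\otimes n}\|\le n\|\sigma(h)v-v\|$ valid for any unit vector $v$. This contradiction shows that $L^{2}_{0}(G/\G)|_{H}$ has a spectral gap, which is the assertion.

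I expect the main obstacle to be entirely in Step 1, and within it in the case where $G$ has a simple factor without property (T): there one genuinely needs Shalom's rigidity of irreducible lattices in products to produce the gap, together with the quantitative decay-of-matrix-coefficients estimates of Kleinbock and Margulis to upgrade that qualitative gap to the $L^{p}$ --- equivalently, tensor-power-tempered --- form that makes the descent in Step 2 possible. By contrast, Step 2 is purely formal, using only permanence of weak containment under restriction and tensor powers, the description of the restricted regular representation, and the characterization of amenability via the regular representation.
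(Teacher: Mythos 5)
The paper itself does not prove Theorem~\ref{spectralgap}; it simply quotes the statement from \cite{M1} and attributes it to the combination of Kleinbock--Margulis \cite{KM} and Shalom \cite{Shalom}, with no argument supplied. Your two-step reconstruction is correct and is, as far as one can tell from the references, exactly the intended route: Step~1 packages the $G$-level input (property~(T) when all factors have it; Shalom's rigidity of irreducible lattices in genuine products; rank-one Laplacian spectral theory when $G$ is a single rank-one factor; Cowling/Kleinbock--Margulis for the $L^{p}$ upgrade) into the statement that some tensor power of $L^{2}_{0}(G/\G)$ is weakly contained in $\lambda_{G}$, and Step~2 is the standard soft descent: restriction preserves weak containment, $\lambda_{G}|_{H}\cong\infty\cdot\lambda_{H}$ via a Borel section, the Hulanicki--Reiter characterization of amenability rules out $1_{H}\prec\lambda_{H}$, and the telescoping estimate transfers almost-invariant vectors to tensor powers. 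You also correctly identify that all the analytic content is in Step~1 and that Step~2 is formal. Since the paper offers no proof to compare against, there is no divergence to report; your proposal supplies the argument the paper omits.
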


Therefore, we can apply Lemma \ref{globalsolution} to obtain a solution $P \in L^{2}(G/\G)$ to the equation $\U P = f$.  Now, for a fixed $t \in \RR$, we have
\begin{eqnarray}
\U (P(\phi_{t}^{\X}x) - P(x)) & = & \frac{d}{ds}[P(\phi_{t}^{\X}\phi_{s}^{\U}x) - P(\phi_{s}^{\U}x)]_{s=o} \nonumber \\
 & = & \frac{d}{ds}[P(\phi_{s}^{\U}\phi_{t}^{\X}x)]_{s=0} - \frac{d}{ds}[P(\phi_{s}^{\U}x)]_{s=o}, \nonumber
\end{eqnarray}
since the flows of $\X$ and $\U$ commute.  Then, because $\U P = f$, 
\begin{eqnarray} 
 & = & f(\phi_{t}^{\X}x) - f(x) \nonumber \\
 & = & \int_{0}^{t}{\frac{d}{d\t}[f(\phi_{\t}^{\X}x)]d\t} \nonumber \\
 & = & \int_{0}^{t}{\X f(\phi_{\t}^{\X}x)d\t}, \nonumber
\end{eqnarray}
which, by the identity $\U g = \X f$,
\begin{eqnarray} 
 & = & \int_{0}^{t}{\U g(\phi_{\t}^{\X}x)d\t} \nonumber \\
 & = & \U (\int_{0}^{t}{g(\phi_{\t}^{\X}x)d\t}). \nonumber
\end{eqnarray}
Since the flow of $\U$ on $G/\G$ is ergodic, this implies that 
\[
P(\phi_{t}^{\X}x) - P(x) = \int_{0}^{t}{g(\phi_{\t}^{\X}x)d\t}.
\]
One sees that the right hand side is differentiable in $t$.  Differentiating, we obtain $\X P = g$.  Thus, $P$ simultaneously solves $\U P = f$ and $\X P = g$.

Our task in the next section is to show that $P$ is smooth.

\subsection{Smoothness of global solution; proof of Theorem \ref{nilpotentsemisimple}} \label{smoothnessofP}

By the assumption on $\X \in \g$, we have a splitting of the tangent bundle of $G/\G$,
\[
T(G/\G) = E^{-} \oplus E^{0} \oplus E^{+},
\]
where $E^{-}$ and $E^{+}$ are the stable and unstable distributions with respect to the flow of $\X$; that is, there exist constants $A, B, \L_{-}, \L_{+} \in \RR_{+}$ such that
\[
\left\| d\phi_{t}^{\X}(\V) \right\| \leq A \cdot e^{-t \L_{-}} \cdot \left\| \V \right\|
\]
for all $\V \in E^{-}$ and $t >0$, and
\[
\left\| d\phi_{-t}^{\X}(\W) \right\| \leq B \cdot e^{-t \L_{+}} \cdot \left\| \W \right\|
\]
for all $\W \in E^{+}$ and $t > 0$.  Furthermore, we have assumed that the intersections $E^{-} \cap \g_{i}$ and $E^{+} \cap \g_{i}$ are nontrivial for all $i = 1, \ldots, k$.  The distributions $E^{-}$ and $E^{+}$ integrate to the stable and unstable foliations for the flow $\phi_{t}^{\X}$ on $G/\G$, denoted $W^{-}$ and $W^{+}$, respectively.  For $y \in W^{-}(x)$ and $z \in W^{+}(x)$, we have
\[
\dist(\ex(t \X)x, \ex(t \X)y) \leq A \cdot e^{-t \L_{-}} \cdot \dist(x,y),
\]
and
\[
\dist(\ex(-t \X)x, \ex(-t \X)z) \leq B \cdot e^{-t \L_{+}} \cdot \dist(x,z),
\]
for all $t > 0$.

We will begin our proof that the solution $P \in L^{2}(G/\G)$ is smooth by examining how $P$ behaves along leaves of the foliations $W^{-}$ and $W^{+}$.  The following lemma will establish that $P$ satisfies a Lipschitz continuity condition locally on these leaves.  

\begin{lemma} \label{lipschitz}
For almost every $x \in G/\G$, there is a neighborhood $V_{x} \subset W^{-}(x)$ containing $x$ such that for almost every $y \in V_{x}$, the following holds:
\[
\left| P(x) - P(y) \right| \leq K_{-} \cdot \mathrm{dist}(x,y),
\]
where $K_{-} > 0$ is a constant.  Similarly, there is a neighborhood $V'_{x} \subset W^{+}(x)$ such that for almost every $y \in V'_{x}$, the following holds:
\[
\left| P(x) - P(y) \right| \leq K_{+} \cdot \mathrm{dist}(x,y),
\]
where $K_{+} > 0$ is a constant.
\end{lemma}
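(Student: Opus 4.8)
The plan is to establish the stable-leaf estimate; the unstable-leaf estimate follows by the time-reversed argument (replacing $\X$ with $-\X$, which exchanges $E^{-}$ and $E^{+}$). The basic idea is a Livsic-type argument: we exploit the contraction along $W^{-}$ under the flow $\phi_{t}^{\X}$ together with the fact that $\X P = g$ with $g \in C^{\infty}(L^{2}(G/\G))$, hence $g$ is (after choosing a good representative) bounded on the compact space $G/\G$. Fix a representative of $P$ and of $g$; since $\G$ is cocompact, $g$ is a genuine smooth function and $\|g\|_{\infty} < \infty$.

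The main point is the cocycle-type identity coming from $\X P = g$: for almost every $x$ and every $t>0$,
\[
P(\ex(t\X)x) - P(x) = \int_{0}^{t} g(\ex(\t\X)x)\, d\t,
\]
which we derived in the previous section. Now take $x \in G/\G$ and $y \in W^{-}(x)$ lying in a small enough neighborhood $V_{x}$ that the contraction estimate $\dist(\ex(t\X)x, \ex(t\X)y) \leq A e^{-t\L_{-}} \dist(x,y)$ holds for all $t>0$. Then, for the difference $P(x)-P(y)$, I would write
\[
P(x) - P(y) = \big(P(x) - P(\ex(t\X)x)\big) + \big(P(\ex(t\X)x) - P(\ex(t\X)y)\big) + \big(P(\ex(t\X)y) - P(y)\big),
\]
and substitute the integral identity into the first and third terms to obtain
\[
P(x)-P(y) = P(\ex(t\X)x) - P(\ex(t\X)y) - \int_{0}^{t}\big(g(\ex(\t\X)x) - g(\ex(\t\X)y)\big)\,d\t.
\]
Because $g$ is smooth on the compact manifold $G/\G$ it is Lipschitz, say with constant $L_{g}$, so $|g(\ex(\t\X)x) - g(\ex(\t\X)y)| \leq L_{g} \cdot A e^{-\t\L_{-}}\dist(x,y)$, and the integral is bounded in absolute value by $(L_{g} A/\L_{-})\dist(x,y)$ uniformly in $t$. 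It remains to kill the term $P(\ex(t\X)x) - P(\ex(t\X)y)$ by letting $t \to \infty$ along a suitable sequence.

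The hard part, and the only delicate step, is controlling $P(\ex(t\X)x) - P(\ex(t\X)y)$ as $t\to\infty$: $P$ is only an $L^{2}$ function, not continuous, so pointwise evaluation along the orbit is not a priori meaningful. The standard device is to integrate over a Følner-type family or simply over an interval in the flow direction: replace the pointwise difference by an average $\frac{1}{h}\int_{0}^{h} \big(P(\ex(t\X)\ex(u\X)x) - P(\ex(t\X)\ex(u\X)y)\big)\,du$ for small $h>0$ and a.e.\ $x$ (the identity above, applied with base points $\ex(u\X)x$, continues to hold and only shifts the integration interval of $g$), so that the quantity to control becomes $\frac{1}{h}\int_{0}^{h}\big(P(\ex((t+u)\X)x)-P(\ex((t+u)\X)y)\big)du$. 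Now one uses that $P \in L^{2}(G/\G)$: by Fubini and the Borel--Cantelli lemma (or by Lusin's theorem, choosing a compact set $E$ of measure close to $1$ on which $P$ is uniformly continuous and using that the flow returns to $E$ with the right frequency), for a.e.\ $x$ there is a sequence $t_{n}\to\infty$ along which both $\ex(t_{n}\X)x$ and $\ex(t_{n}\X)y$ lie in $E$ and the averaged difference tends to $0$ — here one also uses that $\dist(\ex(t_{n}\X)x, \ex(t_{n}\X)y)\to 0$ so the two points are eventually in the same patch where $P|_{E}$ is uniformly continuous. Passing to the limit along $t_{n}$ in the displayed identity and taking $h\to 0$ afterwards yields $|P(x)-P(y)| \leq (L_{g}A/\L_{-})\dist(x,y)$, which is the claim with $K_{-} = L_{g} A/\L_{-}$. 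The unstable estimate is identical with $\phi_{-t}^{\X}$ in place of $\phi_{t}^{\X}$, using $g$ again (note $\X P = g$ forces $-\X P = -g$, which is still bounded Lipschitz), giving $K_{+} = L_{g} B/\L_{+}$.
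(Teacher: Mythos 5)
Your proposal is correct and takes essentially the same route as the paper: split $P(x)-P(y)$ via a long flow segment, rewrite the flow-direction increments as integrals of $g = \X P$, use Lipschitz continuity of $g$ (smooth on compact $G/\G$) together with exponential contraction along $W^{-}$ to bound the integral uniformly in $t$, and then kill $P(\ex(t\X)x)-P(\ex(t\X)y)$ along a sequence $t_n\to\infty$ where both orbit points land in a Lusin set. The paper implements the last step precisely via a Lusin set $E$ of measure $0.99$, the Birkhoff ergodic theorem for the orbit of $x$, and Fubini across an absolutely continuous foliation chart to get the same high visit frequency for a.e.\ $y$ in a stable-leaf neighborhood — this is the ``Lusin alternative'' you sketch, and your observation that $\dist(\ex(t_n\X)x,\ex(t_n\X)y)\to 0$ is what makes uniform continuity of $P|_E$ bite is exactly the (implicitly used) point. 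The auxiliary F\o lner-type averaging over $[0,h]$ you offer as an alternative is an unnecessary detour and, taken on its own, does not close the argument: one still needs some device (Lusin set plus equidistribution, or equivalent) to show the averaged $P$-difference vanishes as $t\to\infty$, so it should not be presented as a self-contained substitute for the Lusin/Birkhoff step.
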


\begin{proof}
To begin, note that for any $x, y \in G/\G$,
\begin{eqnarray}
\left| P(y) - P(x) \right| & = & \nonumber \\
 &    & |P(y) - P(\ex(t \X) y) \\
 & + & P(\ex(t \X) y) - P(\ex(t \X)x) \\
 & + & P(\ex(t \X)x) - P(x)|. 
\end{eqnarray}
Combining lines $(3)$ and $(5)$, we have
\begin{eqnarray}
\left| P(y) - P(x) \right| & = & |\int_{0}^{t}{(\X P(\ex(\t \X)x) - \X P(\ex(\t \X) y))d\t} \nonumber \\
 & + & P(\ex(t \X) y) - P(\ex(t \X)x)|. \nonumber \\
 & = & |\int_{0}^{t}{(g(\ex(\t \X)x) - g(\ex(\t \X) y))d\t} \nonumber \\
 & + & P(\ex(t \X) y) - P(\ex(t \X)x)|. \nonumber
\end{eqnarray}
We will show that for almost every $x \in G/\G$ and almost every $y$ in some neighborhood $V_{x} \subset W^{-}(x)$ containing $x$, there is an increasing divergent sequence $\{t_{k}\}$ such that 
\[
|P(\ex(t_{k} \X) y) - P(\ex(t_{k} \X) x)| \longrightarrow 0.
\]

We begin by noting that, since $\X P = g$ is smooth and $G/\G$ is compact, $g$ is Lipschitz continuous on $G/\G$.  That is, for all $x,y \in G/\G$, we have
\[
\left| g(x) - g(y) \right| \leq C \cdot \mathrm{dist}(x,y)
\]
for some $C > 0$.

We cover $G/\G$ by a collection of coordinate charts of the form $U \times V$, where $\{ z \} \times V$ is a neighborhood of a stable leaf of $W^{-}$ for every $z \in U$.  Since the foliation is absolutely continuous, this can be done in such a way that Fubini's theorem holds in each of these charts, with respect to Lebesgue	 measures on $U$ and $V$.

Let $E \subset G/\G$ be a Luzin set for $P$ of measure $0.99$.  Then for almost every $x \in G/\G$,
\[
\frac{1}{T} \int_{0}^{T}{\chi_{E}(\ex(t\X)x)dt} \longrightarrow 0.99,
\]
as $T \rightarrow \infty$, where $\chi_{E}$ is the characteristic function for $E$.  Suppose $U_{x} \times V_{x}$ is a coordinate chart containing $x$.  By Fubini's Theorem, we also have that for almost every $x \in G/\G$, and almost every $y \in \{ p_{1}(x) \} \times V_{x}$, 
\[
\frac{1}{T} \int_{0}^{T}{\chi_{E}(\ex(t\X) y) dt} \longrightarrow 0.99.
\]
(Here, $p_{1}: U_{x} \times V_{x} \rightarrow U_{x}$ is projection onto the first coordinate.)  For such $x$ and $y$, there is an increasing divergent sequence $\{t_{k}\} \subset \RR_{+}$ such that $\ex(t_{k}\X)x$ and $\ex(t_{k}\X) y$ are in the Luzin set $E$ for all $k$.  Thus, for almost every $x \in G/\G$ and almost every $y \in \{ p_{1}(x) \} \times V_{x}$,
\[
|P(\ex(t_{k} \X) y) - P(\ex(t_{k} \X)x)| \longrightarrow 0.
\]

Now, for these $x \in G/\G$ and $y \in \{ p_{1}(x) \} \times V_{x}$,
\begin{eqnarray}
\left| P(y) - P(x) \right| & = & |\int_{0}^{\infty}{(g(\ex(\t \X)x) - g(\ex(\t \X)y))d\t}| \nonumber \\
 & \leq & \int_{0}^{\infty}{|(g(\ex(\t \X)x) - g(\ex(\t \X)y))|d\t} \nonumber \\
 & \leq & \int_{0}^{\infty}{C \cdot \mathrm{dist}(\ex(\t \X)y, \ex(\t \X)x)d\t} \nonumber \\
 & \leq & \int_{0}^{\infty}{C \cdot A \cdot \mathrm{dist}(y, x) \cdot e^{-\t \L_{-}} d\t} \nonumber \\
 & = & \frac{C \cdot A}{\L_{-}} \cdot \mathrm{dist}(x,y). \nonumber
\end{eqnarray}
This is the desired local Lipschitz condition along stable leaves for the flow of $\X$, with $K_{-} = \frac{C \cdot A}{\L_{-}}$. 

The preceding argument holds \emph{mutatis mutandis} for the unstable foliation, $W^{+}$.
\end{proof}

We use this Lipschitz condition in the following lemma, which establishes that $P$ can be differentiated in stable and unstable directions.

\begin{lemma} \label{livsic}
Suppose $P \in L^{2}(G/\G)$ satisfies $\X P = g$, where $\X \in \g$ is semisimple and $g \in C^{\infty}(L^{2}(G/\G))$.  Let $\V$ be a stable or unstable vector for the flow of $\X$.  Then $\V^{k} P \in L^{2}(G/\G)$ for all $k \in \NN$.
\end{lemma}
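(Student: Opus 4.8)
The plan is to reduce immediately to the case in which $\V$ is a \emph{stable} vector for the flow $\phi_t^{\X}$; the unstable case is identical after replacing $\X$ by $-\X$, which interchanges the roles of $W^{-}$ and $W^{+}$. Throughout, we use the identification of $E^{-}$ with a subspace of right-invariant vector fields on $G$, so that $d\phi_{\t}^{\X}$ acts on it by $\Ad(\ex(\t\X))$, and the partial hyperbolicity estimate reads $\|\Ad(\ex(\t\X))\V\| \le A\, e^{-\t\L_{-}}\|\V\|$ for $\V \in E^{-}$ and $\t > 0$. The starting point is the identity obtained in the course of proving Lemma~\ref{lipschitz}: for almost every $x \in G/\G$ and almost every $y$ on the local stable leaf through $x$,
\[
P(y) - P(x) = \int_{0}^{\infty} \bigl( g(\ex(\t\X)x) - g(\ex(\t\X)y) \bigr)\, d\t ,
\]
the integral converging absolutely because $g = \X P$ is bounded on the compact manifold $G/\G$, while $\dist(\ex(\t\X)x, \ex(\t\X)y) \le A\, e^{-\t\L_{-}} \dist(x,y)$.

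I would then specialize to $y = \ex(s\V)x$. Since $E^{-}$ is $\ad(\X)$-invariant, one has $\ex(\t\X)\ex(s\V)x = \ex(s\,\V_{\t})\,\ex(\t\X)x$, where $\V_{\t} := \Ad(\ex(\t\X))\V$ lies in $E^{-}$ with $\|\V_{\t}\| \le A\, e^{-\t\L_{-}}\|\V\|$. Differentiating the displayed identity once in $s$ at $s = 0$ gives $\V P(x) = -\int_{0}^{\infty} (\V_{\t}\, g)(\ex(\t\X)x)\, d\t$, and each further differentiation in $s$ simply brings down another factor of $\V_{\t}$ (because $\ex(\t\X)\ex(s\V) = \ex(s\V_{\t})\ex(\t\X)$ again), so that, at least formally,
\[
\V^{k} P(x) = - \int_{0}^{\infty} (\V_{\t}^{\, k}\, g)(\ex(\t\X)x)\, d\t \qquad (k \ge 1).
\]
Here $\V_{\t}^{\, k} g$ is a right-invariant derivative of order $k$ of the smooth function $g$; expanding $\V_{\t}$ in a fixed basis of $E^{-}$ with coefficients of size $O(\|\V_{\t}\|)$ shows $\|\V_{\t}^{\, k} g\|_{\infty} \le C_{k}\, e^{-k\t\L_{-}}$, so the integral converges uniformly and defines a bounded function. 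It follows that $\V^{k} P \in L^{\infty}(G/\G) \subset \Lii(G/\G)$.

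The one substantive point is to verify that this pointwise-a.e.\ formula really computes the weak ($\Lii$) derivative $\V^{k}P$, rather than merely a formal one. I would establish this by induction on $k$, carrying along the stronger hypothesis that $\V^{k-1}P$ satisfies the integral formula almost everywhere \emph{and} is Lipschitz along stable leaves with a uniform constant. The base case $k - 1 = 0$ is exactly Lemma~\ref{lipschitz}. For the inductive step, the estimate from the proof of Lemma~\ref{lipschitz}, applied to $\V_{\t}^{\, k-1} g$ in place of $g$ (using $\mathrm{Lip}(\V_{\t}^{\, k-1} g) = O(e^{-(k-1)\t\L_{-}})$ together with the contraction $\dist(\ex(\t\X)x, \ex(\t\X)y) \le A\, e^{-\t\L_{-}}\dist(x,y)$), shows that $\V^{k-1}P$ is leafwise Lipschitz with a uniform constant; hence its difference quotients in the $\V$ direction are uniformly bounded in $L^{\infty}$, converge pointwise almost everywhere to the asserted integral (by the mean value theorem and dominated convergence with $\|\V_{\t}^{\, k} g\|_{\infty} \le C_{k} e^{-k\t\L_{-}}$ as the majorant), and therefore converge in $\Lii(G/\G)$. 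Thus $\V^{k}P = \V(\V^{k-1}P)$ exists in $\Lii$, satisfies the integral formula, and is bounded, completing the induction.

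I expect the main obstacle to be precisely this bookkeeping: one must propagate simultaneously, through the induction, a uniform $L^{\infty}$ bound and a uniform leafwise-Lipschitz bound on the successive derivatives $\V^{j}P$, the latter being exactly what licenses differentiating once more. A subsidiary technicality is that $\ad(\X)|_{E^{-}}$ need not be semisimple, so $\V_{\t} = \Ad(\ex(\t\X))\V$ picks up polynomial-in-$\t$ factors; these are harmless, being absorbed into the exponential decay $e^{-\t\L_{-}}$ supplied by partial hyperbolicity, and none of the estimates above are affected. With Lemma~\ref{livsic} in hand, $\X$, $\U$, and the stable and unstable directions span $\g$ as a Lie algebra, so Theorem~\ref{KaSp} promotes $P$ to a smooth function on $G/\G$, which finishes the proof of Theorem~\ref{nilpotentsemisimple}.
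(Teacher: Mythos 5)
Your proposal is correct and follows essentially the same route as the paper: both start from the absolutely convergent integral formula $P(y)-P(x)=\int_{0}^{\infty}\bigl(g(\ex(\t\X)x)-g(\ex(\t\X)y)\bigr)\,d\t$ along a stable leaf, differentiate repeatedly in the leaf direction to get $\V^{k}P(x)=-\int_{0}^{\infty}(\V_{\t}^{\,k}g)(\ex(\t\X)x)\,d\t$, and conclude $L^{2}$ membership from the exponential decay $\|\V_{\t}\|\le A\,e^{-\t\L_{-}}\|\V\|$ together with the smoothness and boundedness of $g$ on the compact quotient. Where you improve on the paper's exposition is in treating the push-forward explicitly: you work with $\V_{\t}=\Ad(\ex(\t\X))\V$ and the identity $\ex(\t\X)\ex(s\V)=\ex(s\V_{\t})\ex(\t\X)$, which makes transparent both that the differentiation direction changes as it is flowed and that the $k$-th derivative picks up a decay factor $e^{-k\t\L_{-}}$ (the paper's displayed bound $|\V g_{\t}(x)|\le A\,e^{-\t\L_{-}}|\V g(\ex(\t\X)x)|$ is written as if $\V_{\t}$ were a scalar multiple of $\V$, which is not literally true and is really the statement $|(\V_{\t}g)(\ex(\t\X)x)|\le\|\V_{\t}\|\cdot\sup\|dg|_{E^{-}}\|$ after expanding $\V_{\t}$ in a basis of $E^{-}$, as you do). You also make the induction on $k$ explicit with a propagated leafwise-Lipschitz bound, where the paper does $k=1$, $k=2$ by dominated convergence and says to repeat; and you observe $\V^{k}P\in L^{\infty}\subset L^{2}$ directly rather than the paper's Cauchy-in-$L^{2}$ argument for $\{h_{n}\}$, which is a marginally stronger conclusion but relies on the same cocompactness. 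None of these changes alter the substance of the argument; they are clarifications of the same proof.
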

\begin{proof}
Without loss of generality, we assume $\V$ is a stable unit vector for $\X$; that is, $\V \in E^{-}$ and $\left\| \V \right\| = 1$.  The following argument can be carried out for unstable vectors by considering negative time.  

We now compute
\begin{eqnarray}
\V P(x) & = & \lim_{s \rightarrow 0}{\frac{P(\mathrm{exp}(s \V) x) - P(x)}{s}} \nonumber \\
 & = & \lim_{s \rightarrow 0}{\frac{1}{s}(P(\mathrm{exp}(s \V) x) - P(\mathrm{exp}(t \X) \mathrm{exp}(s \V) x))}  \\
 & + & \lim_{s \rightarrow 0}{\frac{1}{s}(P(\mathrm{exp}(t \X) \mathrm{exp}(s \V) x) - P(\mathrm{exp}(t \X) x))}  \\
 & + & \lim_{s \rightarrow 0}{\frac{1}{s}(P(\mathrm{exp}(t \X) x) - P(x))} 
\end{eqnarray}
where $t \in \RR_{+}$.  Combining lines $(6)$ and $(8)$, we have
\begin{eqnarray}
\V P(x) & = & \lim_{s \rightarrow 0}{\frac{1}{s}\int_{0}^{t}{(\X P(\ex(\t \X)x) - \X P(\ex(\t \X)\ex(s \V)x))d\t}} \nonumber \\
 & + & \lim_{s \rightarrow 0}{\frac{1}{s}(P(\ex(t \X)\ex(s \V)x) - P(\ex(t \X)x))}. \nonumber
\end{eqnarray}
Setting $g_{\t}(x) := g(\ex(\t \X) x)$,
\begin{eqnarray}
\V P(x) & = & \lim_{s \rightarrow 0}{\frac{1}{s}\int_{0}^{t}{(g_{\t}(x) - g_{\t}(\ex(s \V)x))d\t}} \nonumber \\
 & + & \lim_{s \rightarrow 0}{\frac{1}{s}(P(\ex(t \X)\ex(s \V)x) - P(\ex(t \X)x))} \nonumber \\
 & = & -\lim_{s \rightarrow 0}{\frac{1}{s}\int_{0}^{t}{\int_{0}^{s}{\V g_{\t}(\ex(\s \V) x)d\s d\t}}} \nonumber \\
 & + & \lim_{s \rightarrow 0}{\frac{1}{s}(P(\ex(t \X)\ex(s \V)x) - P(\ex(t \X)x))} \nonumber \\
 & = & -\int_{0}^{t}{\lim_{s \rightarrow 0}{\frac{1}{s}\int_{0}^{s}{\V g_{\t}(\ex(\s \V) x)d\s d\t}}} \nonumber \\
 & + & \lim_{s \rightarrow 0}{\frac{1}{s}(P(\ex(t \X)\ex(s \V)x) - P(\ex(t \X)x))} \nonumber \\ 
 & = & -\int_{0}^{t}{\V g_{\t}(x)d\t}  \nonumber \\
 & + & \lim_{s \rightarrow 0}{\frac{1}{s}(P(\ex(t \X)\ex(s \V)x) - P(\ex(t \X)x))}. \nonumber
\end{eqnarray}
Since this expression is constant in $t$, we can take a limit,
\begin{eqnarray}
\V P(x) & = & - \lim_{t \rightarrow \infty}{\int_{0}^{t}{\V g_{\t}(x)d\t}}  \nonumber \\
 & + & \lim_{t \rightarrow \infty}{\lim_{s \rightarrow 0}{\frac{1}{s}(P(\ex(t \X)\ex(s \V)x) - P(\ex(t \X)x))}}.
\end{eqnarray}
By Lemma \ref{lipschitz}, we have control over line $(9)$ for almost every $x$ in the following way:
\begin{eqnarray}
 & \left| \lim_{t \rightarrow \infty}{\lim_{s \rightarrow 0}{\frac{1}{s}(P(\ex(t \X)\ex(s \V)x) - P(\ex(t \X)x))}} \right| \nonumber \\
\leq & \lim_{t \rightarrow \infty}{\lim_{s \rightarrow 0}{\frac{K_{-} \cdot A \cdot e^{-t \L_{-}}}{s} \cdot \dist(\ex(s \V)x,x) }} \nonumber \\
\leq & \lim_{t \rightarrow \infty}{\lim_{s \rightarrow 0}{\frac{K_{-} \cdot A \cdot e^{-t \L_{-}}}{s} \cdot s}} \nonumber \\
= & 0. \nonumber
\end{eqnarray}
So we are left with
\begin{eqnarray}
\V P(x) = -\int_{0}^{\infty}{\V g_{\t}(x)d\t}.
\end{eqnarray}

The following calculations will show that $(10)$ defines an $L^{2}$-function on $G/\G$.  Since $\V \in E^{-}$,
\begin{eqnarray}
\left| \int_{0}^{t}{\V g_{\t}(x)d\t} \right| & \leq & \int_{0}^{t}{A \cdot e^{-\t \L_{-}} \cdot \left| \V g(\ex(\t \X)x) \right| d\t}. \nonumber
\end{eqnarray}
We define the functions
\[
h_{t}(x) = \int_{0}^{t}{A \cdot e^{-\t \L_{-}} \cdot \left| \V g(\ex(\t \X)x) \right| d\t}
\]
and
\[
H_{t}(x) = -\int_{0}^{t}{\V g_{\t}(x)d\t} 
\]
for $t \in \RR_{+}$.  Then we have that $\left| H_{n}(x) \right| \leq h_{n}(x)$ for all $n \in \NN$.  Denoting Haar measure on $G/\G$ by $\mu$, we have 
\begin{eqnarray}
\left\| h_{t} \right\|_{L^{2}}^{2} & = & \int_{G/\G}{\left| \int_{0}^{t}{A \cdot e^{-\t \L_{-}} \cdot \V g(\ex(\t \X)x)d\t} \right|^{2}d\mu} \nonumber \\
 & \leq & \int_{G/\G}{\int_{0}^{t}{\left| A \cdot e^{-\t \L_{-}} \cdot \V g(\ex(\t \X)x) \right|^{2} d\t}d\mu} \nonumber \\
 & = & \int_{0}^{t}{\int_{G/\G}{A^{2} \cdot e^{-2\t \L_{-}} \cdot \left| \V g(\ex(\t \X)x) \right|^{2} d\mu}d\t} \nonumber \\
 & = & \int_{0}^{t}{A^{2} \cdot e^{-2\t \L_{-}} \cdot \left\| \V g \right\|_{L^{2}}^{2} d\t}. \nonumber
\end{eqnarray}
It is easy to see that the sequence $\left\{ h_{n} \right\}_{n \in \NN} \subset L^{2}(G/\G)$ is Cauchy, so converges in $L^{2}(G/\G)$.  Now, the sequence $\{H_{n}\}$ is dominated by $\{h_{n}\}$, therefore, by the Dominated Convergence Theorem, $\V P \in L^{2}(G/\G)$.

We now show that $\V^{2}P(x) \in L^{2}(G/\G)$.  It will be apparent that one can apply $\V$ successively with the same procedure.  First, we apply $\V$ to expression $(10)$ to yield
\begin{eqnarray}
\left| \V^{2}P(x) \right| & = & \left| -\lim_{s \rightarrow 0}{\frac{1}{s} \int_{0}^{\infty}{(\V g_{\t}(\ex(s \V)x) - \V g_{\t}(x))d\t}} \right| \nonumber \\
 & \leq & \lim_{s \rightarrow 0}{\int_{0}^{\infty}{\frac{1}{s} \left| \V g_{\t}(\ex(s \V)x) - \V g_{\t}(x) \right| d\t}} \nonumber \\
 & \leq & \lim_{s \rightarrow 0}{\int_{0}^{\infty}{\frac{1}{s} \cdot A \cdot e^{-\t \L_{-}} \left| \V g(\ex(s \V)\ex(\t \X)x) - \V g(\ex(\t \X)x) \right| d\t}} \nonumber
\end{eqnarray}
Since $\V g$ is smooth on $G/\G$, we have that 
\[
\frac{1}{s} \left| \V g(\ex(s \V)\ex(\t \X)x) - \V g(\ex(\t \X)x) \right| \leq M
\]
for all $s > 0$, and some $M > 0$.  Therefore, the integrand is dominated by $M(\t) = M \cdot A \cdot e^{-\t \L_{-}}$.  Thus, by the Dominated Convergence Theorem, we can bring the limit inside to see that $\V^{2}P \in L^{2}(G/\G)$.  Furthermore, one can repeat this procedure, applying $\V$ to $(10)$, to see that $\V^{k} P \in L^{2}(G/\G)$ for all $k$.  
\end{proof}

We will use the following lemma to show that the stable and unstable directions span $\g$ as a Lie algebra, that is, by taking successive brackets.  By Theorem \ref{KaSp}, this will imply that $P$ is smooth on $G/\G$.

\begin{lemma} \label{spanning}
Suppose $\g$ is a simple Lie algebra, and $\X \in \g$ is a semisimple element with nonzero stable and unstable vectors in $\g$.  Consider the splitting
\[
\g = E^{-} \oplus E^{0} \oplus E^{+}
\]
into stable and unstable directions.  Let $\LL \subset \g$ be the subalgebra generated by $E^{-}$ and $E^{+}$.  Then $\LL = \g$.
\end{lemma}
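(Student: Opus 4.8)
The plan is to prove something slightly stronger than stated, namely that $\LL$ is a nonzero ideal of $\g$; since $\g$ is simple and $E^{-} \subseteq \LL$ is nonzero by hypothesis, this immediately forces $\LL = \g$. To set up, I would use the grading attached to $\X$: as $\X$ is semisimple, $\ad(\X)$ is diagonalizable over $\CC$, so $\g^{\CC} = \bigoplus_{\l}\g_{\l}$ with $\g_{\l} = \{\V \in \g^{\CC} : [\X,\V] = \l\V\}$ and $[\g_{\l},\g_{\mu}] \subseteq \g_{\l+\mu}$. The complexifications of the stable, central, and unstable distributions are then $E^{-,\CC} = \bigoplus_{\Re\l < 0}\g_{\l}$, $E^{0,\CC} = \bigoplus_{\Re\l = 0}\g_{\l}$, and $E^{+,\CC} = \bigoplus_{\Re\l > 0}\g_{\l}$. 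The one structural input I need is the relation $[E^{0},E^{\pm}] \subseteq E^{\pm}$: if $\Re\nu = 0$ and $\pm\Re\l > 0$ then $\pm\Re(\nu+\l) > 0$, so $[\g_{\nu},\g_{\l}] \subseteq \g_{\nu+\l} \subseteq E^{\pm,\CC}$, and intersecting with $\g$ (using $E^{\pm,\CC}\cap\g = E^{\pm}$) gives the real statement. (One can instead avoid complexifying and grade $\g$ directly by the real parts of the eigenvalues of $\ad(\X)$; the bracket relations above are exactly what that yields.)

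Next I would show $[\g,E^{-}\oplus E^{+}] \subseteq \LL$. Decomposing $\g = E^{-}\oplus E^{0}\oplus E^{+}$, the bracket of $E^{-}\oplus E^{+}$ against $E^{0}$ lands back in $E^{-}\oplus E^{+} \subseteq \LL$ by the structural relation, while its bracket against $E^{-}\oplus E^{+}$ lands in $\LL$ because $\LL$ is a subalgebra. Then I would bootstrap this to $[\g,\LL]\subseteq\LL$ by induction on bracket length: $\LL$ is spanned by iterated brackets of elements of $E^{-}\oplus E^{+}$, with the length-$1$ brackets being precisely the elements of $E^{-}\oplus E^{+}$ (the base case just treated). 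For a bracket $Z = [A,B]$ with $A,B$ iterated brackets of strictly smaller length, the Jacobi identity gives $[Y,Z] = [[Y,A],B] + [A,[Y,B]]$ for every $Y\in\g$; the inductive hypothesis puts $[Y,A]$ and $[Y,B]$ in $\LL$, and since $A,B\in\LL$ and $\LL$ is closed under the bracket, both terms lie in $\LL$. Hence $[\g,\LL]\subseteq\LL$, so $\LL$ is an ideal of $\g$, and simplicity forces $\LL=\g$.

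I do not expect a serious obstacle here. The only point that needs care is the opening observation that $\ad(E^{0})$ preserves each of $E^{-}$ and $E^{+}$ — this is what upgrades ``$\LL$ is a subalgebra'' to ``$\LL$ is an ideal'' — after which the remainder is routine bookkeeping in the induction on bracket length.
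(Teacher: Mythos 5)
Your proof is correct and takes essentially the same route as the paper: both show that $\LL$ is an ideal by using the key relation $[E^{0},E^{\pm}]\subseteq E^{\pm}$ together with the Jacobi identity, and then invoke simplicity. Your version is somewhat more careful in the bookkeeping (explicit induction on bracket length via $[Y,[A,B]]=[[Y,A],B]+[A,[Y,B]]$, and an explicit verification of $[E^{0},E^{\pm}]\subseteq E^{\pm}$ through the $\ad(\X)$-grading), where the paper instead asserts a permutation-style expansion of $[\V,\W]$ and leaves the structural relation implicit, but these are the same argument.
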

\begin{proof}
We will show that $\LL \subset \g$ is an ideal.  Note that every element of $\LL$ is a sum of elements of the form
\[
\V = [\V_{1},[\V_{2},[\V_{3}, \cdots,[\V_{k-1}, \V_{k}] \cdots ]]]
\]
where $\V_{i}$ is either in $E^{-}$ or $E^{+}$.  Suppose $\W \in E^{0}$.  By repeatedly applying the Jacobi identity, we can express $[\V, \W]$ as a sum of terms of the form
\[
\W_{\s} = \pm [\V_{\s(1)},[\V_{\s(2)},[\V_{\s(3)}, \cdots,[\V_{\s(k)}, \W] \cdots ]]]
\]
where $\s$ is a permutation on the set $\left\{ 1, 2, 3, \ldots, k \right\}$.  It is easy to see that if $\V_{\s(k)}$ is stable, then so is $[\V_{\s(k)}, \W]$; similarly, if $\V_{\s(k)}$ is unstable, then so is $[\V_{\s(k)}, \W]$.  Therefore, $\W_{\s} \in \LL$ and $[\V, \W] \in \LL$.  This proves that $\LL$ is an ideal in $\g$.  $\LL$ contains nonzero elements, therefore, $\LL = \g$.
\end{proof}

We are now ready to state the proof of the first main theorem.
\begin{proof}[Proof of Theorem \ref{nilpotentsemisimple}]
We have a semisimple Lie group $G$ with finite center, $\G \subset G$ a lattice, $\U \in \g$ nilpotent , and $\X \in \g$ semisimple and commuting with $\U$, such that the flow $\phi_{t}^{\X}$ has stable and unstable directions in the Lie algebra of each factor of $G$.  We have $f, g \in C^{\infty}(L^{2}(G/\G))$ satisfying $\U g = \X f$, and $\int_{G/\G}{f} = \int_{G/\G}{g} = 0$.

By the Jacobson-Morozov Lemma (Theorem \ref{JM}), we can find the subalgebra $\h:= \Sl(2,\RR) \times \RR \X \subset \g$ such that $\U =  \big( \begin{smallmatrix} 0 & 1 \\ 0 & 0 \end{smallmatrix}\big) \times (0) \in \Sl(2,\RR) \times \RR \X$. The corresponding subgroup of $\h$ is $H=H_{1} \times H_{2} \subset G$. 

The left-regular unitary representation of $H$ on $L^{2}(G/\G)$ decomposes as
\[
L^{2}(G/\G) = \int_{\oplus}{\HH_{\mu} \otimes \HH_{\theta}ds(\mu,\theta)},
\]
where $ds$-almost every $\HH_{\mu} \times \HH_{\theta}$ is irreducible, so we restrict our attention to an irreducible $\HH_{\mu} \otimes \HH_{\theta}$.  By Lemma \ref{flaminiolemma}, the obstructions to solving $\U P = f_{\mu,\theta}$ coming from $\U$-invariant distributions vanish in each irreducible $\HH_{\mu} \otimes \HH_{\theta}$.  With this, we apply Theorem \ref{FF} to find a solution $P_{\mu} \in \HH_{\mu}$.  By Lemma \ref{irredsolution}, $P_{\mu,\theta} = P_{\mu} \otimes v_{\theta}$  is a solution to $\U P_{\mu,\theta} = f_{\mu,\theta}$ in $\HH_{\mu} \otimes \HH_{\theta}$, and it satisfies the estimate
\[
\left\|P_{\mu,\theta}\right\| \leq C_{\mu_{0},1+\e,0} \left\|f_{\mu,\theta} \right\|_{1+\e},
\]
where $0 < \mu_{0} < \mu$.

Now, Theorem \ref{spectralgap} guarantees that the regular representation of $H$ on $L^{2}(G/\G)$ has a spectral gap for the Casimir operator from $H_{1}$.  Therefore, by Lemma \ref{globalsolution} we can glue the $P_{\mu,\theta}$'s together to get a solution $P \in L^{2}(G/\G)$ to the equation $\U P = f$.  By ergodicity of the flow of $\U$ on $G/\G$, we also get that $\X P = g$ (see the discussion at the end of Section \ref{globalsolutionsection}).

By Lemma \ref{livsic}, $\V^{k}P \in L^{2}(G/\G)$ for any $\V \in \g$ that is stable or unstable with respect to $\X$.   By assumption on $\X$, for each $i = 1, \ldots, k$, we have the decomposition
\[
\g_{i} = E_{i}^{-} \oplus E_{i}^{0} \oplus E_{i}^{+}
\]
into stable and unstable directions for the flow $\phi_{t}^{\X}$.  By Lemma \ref{spanning}, these directions span each $\g_{i}$ as a Lie algebra.  Therefore the distributions $E^{-}$ and $E^{+}$ span $\g$ as a Lie algebra, so we can apply Theorem \ref{KaSp} to see that $P$ is smooth.  This proves the theorem.
\end{proof}

\section{Proof of Theorem \ref{nilpotentnilpotent}; proof of Theorem \ref{maximalunipotent}}

\subsection{Strategy}
Let $H = \overline{\SL(2,\RR)}^{k} \times \overline{\SL(2,\RR)}^{l}$ be the product of two finite-sheeted covers of $\SL(2,\RR)$, and let $U \in H$ be the unipotent subgroup obtained by exponentiating $\U_{1} =  \big( \begin{smallmatrix} 0 & 1 \\ 0 & 0 \end{smallmatrix}\big) \times (0)$ and $\U_{2} = (0) \times \big( \begin{smallmatrix} 0 & 1 \\ 0 & 0 \end{smallmatrix}\big) \in \Sl(2,\RR) \times \Sl(2,\RR)$. Given an embedding $i:H \hookrightarrow G$ into a noncompact semisimple Lie group with finite center, and a smooth cocycle $\a$ over the $U$-action on $G/\G$, Mieczkowski's results imply a solution $P \in L^{2}(G/\G)$ to the cohomology e`quation that is smooth in directions tangent to the $H$-orbits in $G$.  Our ultimate goal is to show that $P$ is actually smooth in all directions.

Suppose $i':H \hookrightarrow G$ is a different embedding, and that $i|_{U} = i'|_{U}$.  Then there is another transfer function $Q \in L^{2}(G/\G)$ that is smooth in directions tangent to the $H$-orbits corresponding to this new embedding.  An ergodicity argument will show that $P$ and $Q$ differ by a constant, which can be chosen to be zero.  Finally, we will show that there are enough embeddings of $H$ into $G$ that coincide on $U$ to prove that $P$ is smooth in all directions.

The following sections are devoted to proving these assertions.

\subsection{Obtaining transfer functions}
In this section we show that the results in \cite{M1} and \cite{M2} can be applied to show that there are transfer functions that are smooth in the $H$-orbit directions of $G$.

Let $\a$ be a smooth cocycle over the action of $U$ on $G/\G$.  Its infinitesimal generator $\o$ is completely determined by where it sends the generators $\U_{1}$ and $\U_{2}$ of $\uu$.  In other words, it is determined by the functions
\[ f = \o(\U_{1}) \qquad g = \o(\U_{2}). \]
Now the cocycle identity is
\[ \U_{1}g = \U_{2}f \]
and the cohomology equation is
\[ \U_{1}P = f \quad \textrm{and} \quad \U_{2}P = g. \]

Suppose we have a unitary representation of $\SL(2,\RR) \times \SL(2,\RR)$ on the Hilbert space $\HH$.  Mieczkowski shows that if the Casimir element for both factors has a spectral gap, then there is a smooth vector $P \in C^{\infty}(\HH)$ that is a solution to the cohomology equation.  In fact, he proves the following stronger result.
\begin{theorem}[Mieczkowski]\label{Mi1}
If there exists a $\mu_{0} > 0$ such that the spectrum of each Casimir satisfies $\s(\Box_{i}) \cap (0,\mu_{0}) = \emptyset$, then we have the following.  Let $f,g \in W^{2s}(\HH)$, ($s>1$), and satisfy the equation $\U_{2}f = \U_{1}g$.  If $t < s-1$, then there exist solutions $P,P' \in W^{t}(\HH)$ such that $\U_{1}P = f$ and $\U_{2}P' = g$.  Furthermore, the norms of $P,P'$ must satisfy $\lVert P \rVert_{t} \leq C_{\mu_{0},s,t} \lVert f \rVert_{2s}$, and $\lVert P' \rVert_{t} \leq C_{\mu_{0},s,t} \lVert g \rVert_{2s}$.  If $t > 1$, then $P$ and $P'$ must coincide, so that there is a true simultaneous solution.
\end{theorem}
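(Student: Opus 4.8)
The plan is to reduce to irreducible unitary representations and then run the one-dimensional theorem of Flaminio--Forni (Theorem \ref{FF}) along the first factor, using the cocycle relation $\U_2 f = \U_1 g$ to annihilate the obstructions. By the Kolmogorov--Mautner decomposition (Theorem \ref{directintegral}) it suffices to treat an irreducible $\HH = \HH_{\mu_1}\otimes\HH_{\mu_2}$, where $\HH_{\mu_i}$ is an irreducible unitary representation of the $i$-th copy of $\SL(2,\RR)$ on which the Casimir acts by $-\mu_i$; the spectral-gap hypothesis forces every $\mu_i$ appearing to satisfy $\mu_i \geq \mu_0 > 0$, so each $\HH_{\mu_i}$ is nontrivial.

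To solve $\U_1 P = f$, note that $\U_1$ acts only on the first tensor factor, so the $\U_1$-invariant distributions on $W^{2s}(\HH)$ are spanned by the operators $\bar D := D\otimes 1$ with $D \in \mathcal{I}_{\U_1}(W^{s}(\HH_{\mu_1}))$. I first check that each such $\bar D$ kills $f$. This is the analogue of Lemma \ref{flaminiolemma}, except that the transverse flow $\U_2$ is now \emph{unipotent} rather than semisimple, so the device ``$\U_2 w = 0$ forces $w = 0$'' is unavailable at the distributional level. The remedy is the one used in the proof of Lemma \ref{flaminiolemma}: $\bar D$ intertwines the two copies of the $\U_2$-action (the relevant square commutes because $\U_2$ touches only the second factor), whence
\[
\U_2\,\bar D(f) = \bar D(\U_2 f) = \bar D(\U_1 g) = 0,
\]
the last equality because $D$ is $\U_1$-invariant and hence $\bar D$ vanishes on the range of $\U_1$. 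This is where the doubled Sobolev order is used: applying the order-$s$ distribution $D$ in the first variable costs $s$ derivatives, and $f\in W^{2s}$ leaves $\bar D(f)$ in $W^{s}(\HH_{\mu_2})\subset\HH_{\mu_2}$, a \emph{genuine} Hilbert-space vector; a nontrivial unitary irreducible representation of $\SL(2,\RR)$ has no nonzero vector fixed by a one-parameter unipotent subgroup (Howe--Moore, Theorem \ref{HoweMoore}), so $\bar D(f) = 0$. With all obstructions killed, Theorem \ref{FF} produces $P\in W^{t}(\HH)$ with $\U_1 P = f$ and $\lVert P\rVert_{t}\leq C_{\mu_0,s,t}\lVert f\rVert_{2s}$ for $t<s-1$; the same argument with the factors interchanged produces $P'\in W^{t}(\HH)$ with $\U_2 P' = g$ and $\lVert P'\rVert_{t}\leq C_{\mu_0,s,t}\lVert g\rVert_{2s}$.

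Passing back to a general $\HH$ is done as in the proof of Lemma \ref{globalsolution}: since the Flaminio--Forni constant depends only on $\mu_0$, $s$, $t$ and not on the individual irreducible, reassembling $P = \int_{\oplus} P_{\mu_1,\mu_2}\,ds$ (and likewise $P'$) integrates the fibrewise estimates to the stated global bounds. For $t>1$ one shows $P = P'$: because $\U_1$ and $\U_2$ commute (they lie in different factors) and $\U_2 f = \U_1 g$,
\[
\U_1(\U_2 P - g) = \U_2(\U_1 P) - \U_1 g = \U_2 f - \U_1 g = 0,
\]
and for $t>1$ the vector $\U_2 P - g$ lies in $W^{t-1}\subset\HH$, so being fixed by the one-parameter group generated by $\U_1$ it must vanish by Howe--Moore. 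Hence $\U_2 P = g$ as well, so $P$ is a simultaneous solution, and the uniqueness clause of Theorem \ref{FF} (solutions are unique modulo the trivial subrepresentation once $t>0$) lets us take $P = P'$.

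The crux of the argument is the obstruction-vanishing step. Because the transverse direction is unipotent, one cannot annihilate the invariant distributions by a purely formal manipulation; one must genuinely pass from the distributional level to the $L^2$ level, and this is exactly what forces the hypothesis $f,g\in W^{2s}$ rather than $W^{s}$. The technical heart is therefore the bookkeeping: tracking the loss of $s$ derivatives through the partial evaluation $\bar D = D\otimes 1$ while checking that the resulting Sobolev constant stays uniform across the direct integral. Everything else is assembling the pieces in the manner of Lemma \ref{globalsolution}.
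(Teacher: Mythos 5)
This theorem is quoted from Mieczkowski \cite{M1}; the present paper does not prove it (the text immediately preceding the statement reads ``In fact, he proves the following stronger result''). So there is no in-paper proof to compare against. Evaluating your proposal on its own terms, though, the outline is sound and matches the overall structure one expects: decompose into irreducibles $\HH_{\mu_1}\otimes\HH_{\mu_2}$ via Theorem~\ref{directintegral}, kill the obstructions, solve in each irreducible via Theorem~\ref{FF}, and glue using the uniformity of the Flaminio--Forni constant. Your main new point --- that the argument of Lemma~\ref{flaminiolemma} must be modified because $\U_2$ is unipotent --- is exactly right, and your fix is clean: the spectral-gap hypothesis forces $\mu_2\geq\mu_0>0$, so $\HH_{\mu_2}$ is nontrivial; once $\bar D(f)$ is known to be a genuine vector of $\HH_{\mu_2}$ rather than merely a distribution, the relation $\U_2\bar D(f)=0$ kills it by the absence of unipotent-fixed vectors in a nontrivial irreducible representation. (A small citation quibble: Theorem~\ref{HoweMoore} as stated in the paper is the ergodicity statement on $G/\G$; what you actually need is the representation-theoretic form --- vanishing of matrix coefficients, or equivalently no nonzero vectors fixed by a noncompact subgroup in a nontrivial irreducible --- which is standard but not literally what Theorem~\ref{HoweMoore} says.)

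Two details are worth making explicit if you flesh this out. First, your phrasing that the $\U_1$-invariant distributions on $W^{2s}(\HH)$ ``are spanned by'' the $\bar D=D\otimes 1$ is not literally true; what you use, and what is true, is that $\bar D(f)=0$ for all $D\in\mathcal{I}_{\U_1}(W^{s}(\HH_{\mu_1}))$ implies that every $\HH_{\mu_2}$-slice of $f$ is annihilated by every $\U_1$-invariant distribution, so that Theorem~\ref{FF} applies slice by slice. Second, the bookkeeping you gesture at does genuinely require the doubled Sobolev index: expanding $f=\sum_{j,k}f_{jk}\,e_j\otimes e_k'$ in eigenbases of the two Laplacians with eigenvalues $-\lambda_j$, $-\mu_k$, one checks $(1+\lambda_j)^{s}(1+\mu_k)^{s}\leq(1+\lambda_j+\mu_k)^{2s}$, so $\lVert\bar D(f)\rVert_{W^{s}(\HH_{\mu_2})}\leq\lVert D\rVert\,\lVert f\rVert_{W^{2s}}$, and the analogous splitting $(1+\lambda_j)^{s}(1+\mu_k)^{t}\leq(1+\lambda_j+\mu_k)^{2s}$ for $t\leq s$ gives $\lVert P\rVert_{W^{t}(\HH)}\leq C_{\mu_0,s,t}\lVert f\rVert_{W^{2s}}$ after applying the fiberwise Flaminio--Forni bound. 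These inequalities are the content of ``$f,g\in W^{2s}$'' and should be spelled out; with them, your argument is complete.
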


Since the unitary representations of a finite sheeted cover of $\SL(2,\RR)$ are unitarily equivalent to those for $\SL(2,\RR)$, Theorem \ref{Mi1} holds for representations of $H = \overline{\SL(2,\RR)}^{k} \times \overline{\SL(2,\RR)}^{l}$.

An embedding $H \hookrightarrow G$ induces a unitary representation of $\overline{\SL(2,\RR)}^{k} \times \overline{\SL(2,\RR)}^{l}$ on $L^2(G/\G)$.  In order to apply the previous theorem, we need to show that the Casimir elements for both factors have spectral gaps.  But this is immediate from Theorem \ref{spectralgap}.  Therefore, we can apply Theorem \ref{Mi1}.  Our smooth cocycle $\a$ is determined by the smooth functions $f,g \in C^{\infty}(L^{2}(G/\G))$, and Theorem \ref{Mi1} guarantees the existence of the transfer function $P \in C^{\infty}(L^{2}(G/\G))$.

\subsection{Different embeddings}
We point out that if there are two different embeddings $i:H \hookrightarrow G$ and $i':H \hookrightarrow G$ that coincide on $U \subset H$, then the corresponding transfer functions $P$ and $Q$ differ by a constant.  This is a simple consequence of the ergodicity of the flow of $\U$ on $G/\G$.   We can choose the constant to be $0$, so the transfer functions $P$ and $Q$ that we get from the embeddings $i$ and $i'$ agree almost everywhere.  Furthermore, they are smooth along their respective $H$-orbits.  Therefore, the partial derivatives of $P$ in directions tangent to the $i'(H)$-orbits also exist, as $L^{2}$ functions.  Our next goal is to show that there are enough embeddings of $H$ into $G$ to span all directions with the orbits.

\subsection{Getting enough embeddings}
In this section it will be convenient to denote $H$ as being a subgroup, $H \subset G$.  Different embeddings that coincide on $U$ will be achieved by conjugating $H$ by elements of the centralizer $Z(U)$ of $U$ in $G$.  We will look at the images of the Lie algebra $\h$ under these conjugations and show that the Lie algebra generated by the union of these is all of $\g$, the Lie algebra of $G$.  Theorem \ref{KaSp} will then imply that the solution $P$ is smooth.

\begin{proposition}\label{prop}
Suppose $H$ is a finite-dimensional split semisimple Lie group, $U \subset H$ is a unipotent subgroup, and $G$ is a simple Lie group into which $H$ embeds.  Let $\uu$, $\h$, and $\g$ be their respective Lie algebras.  Denote by $Z(U)$ the centralizer of $U$ in $G$.  Let $\LL = \left\langle \Ad(Z(U)) \h \right\rangle$ be the Lie algebra generated by 
\[\Ad(Z(U)) \h = \left\{g\X g^{-1} | g \in Z(U) \quad \mathrm{ and } \quad \X \in \h \right\}.\]
Then $\LL = \g$.
\end{proposition}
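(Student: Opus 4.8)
The plan is to prove that $\LL$ is an ideal of $\g$; since $G$ is simple, $\g$ is a simple Lie algebra, so any nonzero ideal (and $\LL \supseteq \h \neq 0$) must be all of $\g$. First I would record two structural properties of $\LL$. Because the identity belongs to $Z(U)$, we have $\h \subseteq \Ad(Z(U))\h \subseteq \LL$, and since $\LL$ is a subalgebra containing $\h$ this gives $[\h,\LL] \subseteq [\LL,\LL] \subseteq \LL$, i.e. $\LL$ is an $\h$-submodule of $\g$ under the adjoint action. Secondly, $\LL$ is $\Ad(Z(U))$-invariant by construction, so differentiating the curve $s \mapsto \Ad(\exp s\V)X$ at $s=0$, for $\V \in \z := \mathrm{Lie}(Z(U))$ and $X \in \LL$, shows $[\z,\LL] \subseteq \LL$. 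Note that $\z$ is exactly the centralizer of $\uu$ in $\g$.

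The heart of the argument is to combine these with highest-weight theory for the split semisimple algebra $\h$. Since $U$ is a unipotent subgroup of $H$, the subalgebra $\uu$ is contained in the nilradical $\mathfrak{n}^{+}$ of some Borel $\mathfrak{b} = \aaa \oplus \mathfrak{n}^{+}$ of $\h$; I fix such a $\mathfrak{b}$ and the associated positive system. By Weyl complete reducibility I choose an $\h$-stable complement $\mathfrak{m}$ with $\g = \LL \oplus \mathfrak{m}$ and decompose $\mathfrak{m}$ into irreducible $\h$-modules. If $M \subseteq \mathfrak{m}$ is such an irreducible summand with highest-weight vector $w$, then $w$ is killed by $\mathfrak{n}^{+} \supseteq \uu$, so $w \in \z$. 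Hence $[\mathfrak{n}^{-},w] \subseteq M$ (as $M$ is $\h$-stable) and simultaneously $[\mathfrak{n}^{-},w] \subseteq [\LL,\z] \subseteq \LL$ (as $\mathfrak{n}^{-} \subseteq \h \subseteq \LL$ and $w \in \z$), so $[\mathfrak{n}^{-},w] \subseteq \LL \cap M = 0$. A highest-weight vector annihilated by all of $\mathfrak{n}^{-}$ spans an $\h$-submodule, so $M$ is the trivial one-dimensional module and $w$ centralizes $\h$. Therefore every irreducible summand of $\mathfrak{m}$ is trivial, so $\mathfrak{m}$ lies in the centralizer of $\h$ in $\g$, which is contained in $\z$. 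Once $\mathfrak{m} \subseteq \z$, we get $[\g,\LL] = [\LL,\LL] + [\mathfrak{m},\LL] \subseteq \LL + [\z,\LL] \subseteq \LL$, so $\LL$ is an ideal and the proof concludes.

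The step I expect to be the real obstacle is the one about the trivial isotypic component of $\g$ as an $\h$-module, the subspace of directions centralizing $\h$: the $\Ad(Z(U))$-invariance of $\LL$ visibly produces only the non-trivial $\h$-types, via the lowering operators $\mathfrak{n}^{-} \subseteq \h$ acting on highest-weight vectors that already sit in $\z$. The point that makes it go through is that a direction centralizing $\h$ also centralizes $\uu$ and hence already lies in $\z$, so $\Ad(Z(U))$-invariance accounts for it too; the highest-weight computation is precisely what isolates and disposes of this case. In the setting of Theorem~\ref{maximalunipotent}, where $\uu = \RR\U_{1} \oplus \RR\U_{2}$ is itself a maximal unipotent subalgebra of $\h \cong \Sl(2,\RR) \times \Sl(2,\RR)$, the Borel $\mathfrak{b}$ is the obvious one and the highest-weight vectors are exactly the elements killed by $\U_{1}$ and $\U_{2}$.
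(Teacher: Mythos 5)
Your proof is correct, and it takes a route that is recognizably related to but genuinely cleaner than the paper's. Both arguments turn on the same two ingredients: highest-weight vectors for the $\h$-action on $\g$ lie in $\z(\uu)$ because $\uu \subseteq \mathfrak{n}^{+}$, and $\LL$ is stable under $\ad(\z(\uu))$. But the paper uses only the weaker inclusion $[\z(\uu),\h] \subseteq \LL$, proceeds by building \emph{up}: it shows the isotypic components $\g^{\l}$ of nonzero highest weight lie in $\LL$, then introduces the auxiliary subalgebra $\mathfrak{i} = \langle \bigoplus_{\l \neq 0}\g^{\l}\rangle \subseteq \LL$ and argues $\mathfrak{i}$ is an ideal by checking $\ad(\z(\mathfrak{t}))$-invariance of nonzero weight spaces. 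You instead exploit the stronger fact $[\z,\LL]\subseteq\LL$ (which follows immediately from $\Ad(Z(U))$-invariance of $\LL$, since $\Ad(g_0)$ permutes the generating set $\Ad(Z(U))\h$) and argue \emph{dually}: you take an $\h$-stable Weyl complement $\mathfrak{m}$ of $\LL$ and show each irreducible $\h$-summand of $\mathfrak{m}$ must be trivial, since its highest-weight vector $w$ is killed by $\mathfrak{n}^{+}$ by definition and by $\mathfrak{n}^{-}$ because $[\mathfrak{n}^{-},w]$ lands in $\LL \cap M = 0$. This puts $\mathfrak{m}$ inside the centralizer of $\h$, hence inside $\z$, so $[\mathfrak{m},\LL]\subseteq\LL$ and $\LL$ itself is an ideal. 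What your version buys is directness: you prove the natural statement ($\LL$ is an ideal) without introducing $\mathfrak{i}$, and the triviality of the complementary summands is a one-line highest-weight computation rather than a case analysis of weight spaces and their $\z(\mathfrak{t})$-invariance. Both proofs share the same background assumption (that the unipotent $\uu$ sits inside $\mathfrak{n}^{+}$ for a suitable split Borel of $\h$), which is standard and holds in the application where $\uu$ is the full nilradical of $\Sl(2,\RR)\times\Sl(2,\RR)$.
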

\begin{proof}
The centralizer of $\uu$ in $\g$, denoted $\z(\uu)$, is the Lie algebra of $Z(U)$.  Notice that for all $\X \in \z(\uu)$ and $\Y \in \h$, 
\[ t \mapsto \exp({t \X}) \cdot \Y \cdot \exp{(-t \X)}\]
is a curve in $\LL$ with velocity $[\X, \Y]$ at $t=0$.  Therefore, $\left[\z(\uu),\h \right] \subset \LL$.

Since $\h$ is split, there is a splitting Cartan subalgebra $\mathfrak{t} \subset \h$ that acts diagonally on $\g$, and we can order its roots so that $\uu$ is spanned by the positive root spaces.  Then we have a decomposition of $\g$ into the sum
\[ 
\g = \bigoplus_{\l \in \Psi} \g^{\l} 
\]
where $\g^{\l}$ is the sum of all $\ad(\h)$-invariant subspaces of $\g$ with highest weight $\l$, and $\Psi$ is a finite set of highest weights.
For $\l \in \Psi$,
\[ 
\g_{\l} = \left\{\X \in \g^{\l} | [ \T, \X]=\l(\T)\X \quad \textrm{for all } \T \in \mathfrak{t}\right\}. 
\]

Since $\uu$ is in the positive root spaces, any element of $\uu$ annihilates any highest weight vector, so $\g_{\l} \subset \z(\uu)$ for all $\l \in \Psi$.  Now, for any $\X \in \g_{\l}$ and $\T \in \mathfrak{t}$, we have that $[\X,\T] \in [\z(\uu), \h] \subset \LL$.  But $[\X,\T]=-\l(\T)\X$, so if $\l \neq 0$, then $\X \in \LL$.  This shows that for $\l \neq 0$, $\g_{\l} \subset \LL$.  Since for any $\l \in \Psi$, $\g_{\l}$ generates $\g^{\l}$ as an $\h$-module, 
\[ \bigoplus_{\l \in \Psi \backslash \{0\}}\g^{\l} \subset \LL.\] 

Let $\mathfrak{i}$ be the Lie algebra generated by $\bigoplus_{\l \in \Psi \backslash \{0\}}\g^{\l}$.  Then it is clear that $\mathfrak{i} \subset \LL$, and that
\[ \g = \mathfrak{i} + \g^{0} = \mathfrak{i} + \z(\mathfrak{t}). \]

We claim that $\mathfrak{i}$ is $\ad(\z(\mathfrak{t}))$-invariant.  Let $\X$ be a non-zero (not necessarily highest) weight vector with weight $\l$, and let $\Z \in \z(\mathfrak{t})$.  Then, for any $\T \in \mathfrak{t}$,
\begin{displaymath}
[[\X,\Z],\T] = [[\X,\T],\Z]
 = \l(\T)[\X,\Z].
\end{displaymath}
Thus, $[\X,\Z]$ is a weight vector with weight $\l$.  This shows that the non-zero weight spaces are $\ad(\z(\mathfrak{t}))$-invariant, and since $\mathfrak{i}$ is the Lie algebra generated by these, it is also $\ad(\z(\mathfrak{t}))$-invariant.

Obviously, $\mathfrak{i}$ is also $\ad(\mathfrak{i})$-invariant, hence it is an ideal in $\g$.  Since $\g$ is simple, and $\mathfrak{i}$ contains more than just $0$, we see that $\mathfrak{i}$ must equal $\g$.  Finally, since $\mathfrak{i} \subset \LL$, we get the desired result that $\LL = \g$.
\end{proof}

Our $H$ is split semisimple.  We will use this lemma to show that there are enough conjugates of $\h$ in $\g$ by elements of $Z(U)$ to generate $\g$ as a Lie algebra.  This is all that is needed to prove Theorem \ref{nilpotentnilpotent}; the proof will be stated in the following section.

\subsection{Proofs of Theorem \ref{nilpotentnilpotent} and Theorem \ref{maximalunipotent}}

Here we present the proof of Theorem \ref{nilpotentnilpotent} and Theorem \ref{maximalunipotent}.  We will keep the same notation for $H$ and $U$ throughout.
\begin{proof}[Proof of Theorem \ref{nilpotentnilpotent}]
We have a product $G = G_{1} \times \cdots \times G_{2}$ of noncompact simple Lie groups with finite center.  We have assumed that $G$ admits an embedding of $H$ such that the projection $U_{i}$ of $U$ to $G_{i}$ is nontrivial for all $i = 1, \ldots, k$.  Suppose we are given a smooth cocycle $\a: U \times G/\G \rightarrow \RR$.

By the discussion following Theorem \ref{Mi1} \cite{M1}, there exists a transfer function $P \in L^{2}(G/\G)$ for the given smooth cocycle $\a$, and $P$ is smooth in directions tangent to the $H$-orbits corresponding to the given embedding $i: H \hookrightarrow G$.  We obtain different embeddings of $H$ into $G$ by conjugating the image of $i$ by elements of the centralizer $Z(U)$ of $U$ in $G$.  Such embeddings will clearly all agree on $U$.  $P$ is differentiable, in the $L^{2}$ sense, in directions that are tangent to the $H$-orbits corresponding to any of these embeddings.  

To see that there are enough such embeddings to span $\g$ as a Lie algebra, observe that the projection $H_{i}$ of $H$ to $G_{i}$ is a split semisimple Lie subgroup of $G_{i}$, for all $i$.  Proposition \ref{prop} then shows that there are enough conjugates of $\h_{i} := \mathrm{Lie}(H_{i})$ by elements of $Z_{G_{i}}(U_{i}) \subset Z(U)$ to span $\g_{i} := \mathrm{Lie}(G_{i})$.  Thus, there are enough conjugates of $\h$ by elements of $Z(U)$ to span $\g$.  Therefore, by Theorem \ref{KaSp}, $P$  is smooth on $G/\G$.  This completes the proof.
\end{proof}

\begin{proof}[Proof of Theorem \ref{maximalunipotent}]
Let $\a$ be a cocycle over the $V$-action on $G/\G$.  Then it restricts to a cocycle over the $U$-action on $G/\G$, so by the previous theorem there is a smooth transfer function $P$ that satisfies
\[ \a(u,x) = -P(ux) + c(u) + P(x) \]
for all $u \in U$ and $x \in G/\G$, where $c: U \rightarrow \RR$ is a constant cocycle.  Let $V'$ be the center of $V$.  Then for $v \in V'$,
\begin{eqnarray}
\a(v,x) & = & \a(uvu^{-1},x) \nonumber \\
 & = & \a(u^{-1},x) + \a(v,u^{-1}x) + \a(u,vu^{-1}x) \nonumber \\
 & = & -P(u^{-1}x) + c(u^{-1}) + P(x) \nonumber \\
 & & -P(vx) + c(u) + P(vu^{-1}x) \nonumber \\
 & & + \a(v,u^{-1}x) \nonumber \\
 & = & -P(vx) + P(x) \nonumber \\
 & & -P(u^{-1}x) + P(vu^{-1}x) + \a(v,u^{-1}x) \nonumber
\end{eqnarray}
Regrouping terms, we see that 
\[ \a(v,x) + P(vx) - P(x) = -P(u^{-1}x) + P(vu^{-1}x) + \a(v,u^{-1}x) \]
 is a $U$-invariant smooth function on $G/\G$ for every $v \in V'$.  By ergodicity of the $U$-action on $G/\G$, it is constant.  Therefore, setting $c'(v) = -P(u^{-1}x) + P(vu^{-1}x) + \a(v,u^{-1}x)$, we have shown that $P$ satisfies
\[ \a(v,x) = -P(vx) + c'(v) + P(x) \]
for all $v \in V'$ and $x \in G/\G$.  It is clear that $c' = c$ on $U \cap V'$.

Now, $V'$ is closed and noncompact in $G$ and hence, by Theorem \ref{HoweMoore}, acts ergodically on $G/\G$.  Therefore, we can carry out the same calculation as above, where $V'$ will play the role that $U$ played, and $V$ will play the role that $V'$ played.  This shows that $P$ satisfies
\[ \a(v,x) = -P(vx) + c(v) + P(x) \]
for all $v \in V$ and $x \in G/\G$, and completes the proof of the theorem.
\end{proof}

\subsection{Remarks on the simple case}
Theorem A' is the statement of Theorem \ref{nilpotentsemisimple} for the case of a noncompact simple Lie group $G$ with finite center.  Notice that if $\ad(\X)$ has a root $\l$ that is not purely imaginary, then $\V + \overline{\V}$ is in the real Lie algebra $\g$, for any $\V \in \g \otimes \CC$ satisfying $[\X, \V] = \l \V$.  The vector $\V + \overline{\V}$  is either stable or unstable with respect to the flow $\phi_{t}^{\X}$ on $G/\G$, depending on whether the real part of $\l$ is negative or positive.  Thus, assumption in Theorem \ref{nilpotentsemisimple} that the roots of $\ad(\X)$ are not all purely imaginary implies that the Lie algebras of all the factors of $G$ (that is, $G$ itself) contain stable and unstable vectors for the flow of $\X$.

Similarly, Theorem B' is the statement of Theorem \ref{nilpotentnilpotent} for the case of a simple Lie group $G$.  Here, it is clear that $\U_{1} + \U_{2}$ projects nontrivially to each factor.

\section{Acknowledgements}
I am grateful to the referee for valuable comments on the previous draft; to Joseph Conlon, Gopal Prasad, Benjamin Schmidt, and Alejandro Uribe for helpful discussions and input; to Livio Flaminio for helpful correspondence and, in particular, for providing Lemma \ref{flaminiolemma}; and to my advisor, Ralf Spatzier, for suggesting this problem and for giving fruitful advice throughout this process.

\bibliographystyle{amsplain}
\bibliography{bibliography}

\end{document}